\newtheorem{definition}{Definition}[section]
\newtheorem{lemma}[definition]{Lemma}
\newtheorem{theorem}[definition]{Theorem}
\newtheorem{proposition}[definition]{Proposition}
\newtheorem{corollary}[definition]{Corollary}
\newtheorem{remark}[definition]{Remark}
\newtheorem{example}[definition]{Example}
\numberwithin{equation}{section}
\newcommand{\red}[1]{{\color{red}#1}}
\newcommand{\blue}[1]{{\color{blue}#1}}
\def \g{\mathfrak{g}}
\def \R{\mathbb{R}}
\def \p {\eta}
\def \M {\mathcal{M}}
\def \h{\mathfrak{h}}
\def \l{\mathfrak{l}}
\def \V{\mathcal{V}_0}
\def \H{\mathcal{H}}
\def \X{\mathfrak{X}}
\def \W {\H_\nu^\Pi}
\newcommand{\lvec}[1]{\overleftarrow{#1}}
\newcommand{\rvec}[1]{\overrightarrow{#1}}
\DeclareMathOperator{\Tr}{Tr}
\def\p {\eta}
\begin{document}

\title{Unimodularity and invariant volume forms for Hamiltonian dynamics on coisotropic Poisson homogeneous spaces}

\author{I. Gutierrez-Sagredo$^{1,2}$, D. Iglesias Ponte$^{2,3}$, J. C. Marrero$^{2,3}$, E. Padr\'on$^{2,3}$}
\date{}

\maketitle

\vspace{-20pt}
\begin{center}
{\small\it  $\;^1$ Departamento de Matem\'aticas y Computaci\'on, Universidad de Burgos, 09001 Burgos, Spain}

{\small\it $\;^2$Departamento de Matem\'aticas, Estad\'{\i}stica e Investigaci\'on Operativa}\\{\small\it University of La Laguna, La Laguna, Spain}
\\[5pt]

{\small\it $\;^3$ULL-CSIC Geometr\'{\i}a Diferencial y Mec\'anica Geom\'etrica and Instituto de Matem\'aticas y Aplicaciones IMAULL-University of La Laguna, La Laguna, Spain}
\\[5pt]
e-mail: {\small \href{mailto:igsagredo@ubu.es}{igsagredo@ubu.es}, \href{mailto:idiglesia@ull.edu.es}{diglesia@ull.edu.es}, \href{mailto:jcmarrer@ull.edu.es}{jcmarrer@ull.edu.es}, \href{mailto:mepadron@ull.edu.es}{mepadron@ull.edu.es} }

\end{center}

\begin{abstract} 
In this paper, we introduce a notion of  multiplicative unimodularity for a coisotropic Poisson homogeneous space. Then, we discuss the unimodularity and the multiplicative unimodularity for these spaces and the existence of an invariant volume form for explicit Hamiltonian systems on such spaces. Several interesting examples illustrating the theoretical results are also presented. 

\medskip 

\noindent 
{\footnotesize {\bf AMS Mathematics Subject Classification (2020):} 37C40, 37J39, 53D17, 70G45, 70H05.}

\noindent{\footnotesize {\bf Keywords:} Unimodularity, multiplicative unimodularity, Hamiltonian systems, invariant volume forms, coisotropic, Poisson homogeneous spaces.}

\end{abstract}

\tableofcontents

\section{Introduction}
\label{sec:intro}
\subsection{Invariant volume forms for Hamiltonian systems on Poisson-Lie groups}\label{section1.1}

When studying a dynamical system  $X\in \mathfrak{X}(M)$ on a manifold $M$ of dimension $n$, it is natural to look for conserved quantities to study its integrability. For instance, if there is a family $\{f_1,\ldots ,f_{n-1}\}$ of independent first integrals ($X(f_i)=0$) then
it is possible to obtain the orbits of $X$. 
A different possibility is to look, not only for first 
integrals, but also for an invariant volume form $\Omega$: if there exist $\{f_1,\ldots ,f_{n-2}\}$ independent first integrals, the existence of such a volume form guarantees that the dynamical system can be integrated by quadratures (see, for instance, \cite{Ko}). 

In the special case of a Hamiltonian system  on  a symplectic manifold of dimension $2n$, 
there exists the notion of Liouville integrability. The system is Liouville integrable if one can find $n$
functionally independent first integrals which are Poisson commuting (i.e., they are in involution with respect to the Poisson bracket).
On the other hand, with respect to invariant volume forms, Liouville's theorem states that 
the flow of the Hamiltonian vector field on a symplectic manifold preserves the symplectic volume (see, for instance, \cite{AM,Ar}).
  
More generally, when the dynamical system is Hamiltonian with respect to a Poisson structure (not necessarily symplectic), Liouville's theorem can be applied if we restrict to the symplectic leaves and consider the symplectic structures on them. However, in general, the symplectic foliation is hard to compute, and thus it is difficult to obtain invariant volume forms on the leaves. Taking this into account it seems more reasonable to look for an invariant volume form on the whole manifold. 

Now, given an orientable Poisson manifold $(M,\Pi )$, 
the modular class is a first order cohomology 
class in the Poisson  cohomology complex of $M$ 
defined by a Poisson vector field, which measures 
the existence of an invariant volume form for the 
flows of all Hamiltonian vector fields (see 
\cite{W96}). If the modular class vanishes then $(M,\Pi )$ is said to be unimodular.
By the aforementioned Liouville's theorem, the modular class of a symplectic manifold is zero. 
 
 Using that Lie algebroids are the dual objects to linear Poisson structures on a vector bundle, one can also define the modular class of a Lie algebroid \cite{ELW, W96}. In fact, the relation between the unimodularity of a Lie algebroid and the existence of invariant volume forms for the hamiltonian dynamics on the dual bundle was discussed in \cite{M}.

More recently, in \cite{gutierrez2023unimodularity}, we studied the existence of invariant volume forms for Hamiltonian systems on Poisson-Lie groups. Recall that a Poisson-Lie group is a Lie group $G$ endowed with a Poisson structure such that the multiplication is a Poisson map \cite{Dr}.  Poisson-Lie groups are in one-to-one correspondence with the so-called Lie bialgebras $(\mathfrak{g},\mathfrak{g}^*)$, which are compatible pairs of Lie algebras in duality.  In \cite{ELW} it has been given a description of the modular vector field of a Poisson-Lie group $G$ with respect to a left-invariant volume form using the modular character of the dual Lie algebra of $G$. From this description, in \cite{gutierrez2023unimodularity} we proved 

{\it Given a Hamiltonian function $h$ on a Lie group $G$ endowed with a unimodular Poisson-Lie structure $\Pi _G$ (that is, the dual Lie algebra of $G$ is unimodular), then the Hamiltonian vector field of $h$, $X_{h}^{\Pi_G}$, preserves a multiple of any left-invariant volume form on $G$. Conversely, if the identity element of $G$ is a nondegenerate singularity of a Hamiltonian function  $h$ and $X_{h}^{\Pi_G}$ preserves a volume form then the Poisson-Lie structure $\Pi_G$ is unimodular.}    

As a consequence, we recovered a result in 
 \cite{K88} about the relation between the  unimodularity of a Lie algebra $\g$ and the preservation of a volume form by 
 the flow associated with  a Hamiltonian function of kinetic type on the dual space $\mathfrak{g}^*$.
 
 Modular classes have also been introduced and studied for more general objects as skew algebroids \cite{Gr} and the dual objects of the latter: linear almost Poisson structures on vector bundles (see \cite{FGM}). In fact, in \cite{FGM} the relation between the unimodulatity of a linear almost Poisson structure and the existence of invariant volume forms for nonholonomic mechanical systems was also discussed.

\subsection{ Poisson homogeneous spaces}\label{Section1.2}
An important class of Poisson manifolds related to Poisson-Lie groups is that of Poisson homogeneous spaces, which were introduced by Drinfeld \cite{Dr2} as semi-classical limits of
homogeneous spaces of quantum groups. Given a Poisson-Lie group $(G,\Pi _G)$ and a closed Lie subgroup $H,$ a homogeneous Poisson structure on $G/H$ is a Poisson structure
$\Pi$ on $G/H$ such that the natural transitive action 
\[
\begin{array}{ccc}
(G, \Pi _G) \times (G/H, \Pi ) &\to & (G/H, \Pi ),\\ [8pt]
 (g_1, g_2H) & \mapsto & g_1g_2H,
\end{array}
\]
is a Poisson map. 

In this paper, we are interested in a particular class of Poisson homogeneous spaces, namely coisotropic Poisson quotients. For these spaces, the canonical projection $q:(G,\Pi_G)\to (G/H,\Pi)$ is a Poisson epimorphism or, equivalently, the annihilator $\h^0$ of the Lie algebra $\h$ of $H$  is a Lie subalgebra of the dual Lie algebra $\g^*$. 

Coisotropic Poisson quotients play a special role in the description of integrable systems, such as the Toda lattice (see  \cite{Rui} and the references therein). In addition, we remark the following facts: 
\begin{enumerate}
\item[(i)] The explicit  description of the symplectic leaves for a Poisson homogeneous space and the symplectic structure on them may be hard (see 
\cite{Ka}).

\item[(ii)] Similarly to Poisson-Lie groups, a Poisson homogeneous space is integrable and a symplectic groupoid ${\mathcal G}(G/H)$ integrating $G/H$ may be constructed (see \cite{BuIgLu}; see also \cite{BoCiStTa1,BoCiStTa2}). However, the dimension of ${\mathcal G}(G/H)$ is $2\dim(G/H)$ and the integration is not always explicit.
\end{enumerate}

\subsection{Aim and contributions of the paper}
The aim of this paper is to discuss the existence of a volume form which is invariant for a Hamiltonian system on a coisotropic Poisson homogeneous space. For this purpose, one could apply Liouville Theorem to: (i) the restriction of the Hamiltonian system to the symplectic leaves of the homogeneous Poisson structure or (ii) the natural extension of the Hamiltonian system to the symplectic groupoid ${\mathcal G}(G/H)$ integrating $G/H$. However, for reasons (i) and (ii) in Section \ref{Section1.2}, it seems reasonable to study the problem directly in the Poisson homogeneous space $G/H$. In fact, we will discuss the relation between the existence of a volume form on the whole homogeneous space $G/H$
which is invariant for the Hamiltonian system and the unimodularity of the associated Lie algebras $\g,\g^*$ and $\h^0$, extending the results of our previous paper \cite{gutierrez2023unimodularity}. 

A difference, which first arises when dealing with a homogeneous space, is that there is not a natural choice for a volume form, as opposed to the Lie group case, where one may choose left (or right)-invariant volume forms. Now, given a volume form $\nu$ on a homogeneous space $M=G/H$ then
\[
q^*\nu =e^\sigma \lvec{\V},
\]
where $\sigma\in C^\infty (G)$ such that $\sigma (e)=0$, $\V =(q^*\nu )(e)\in \wedge^{top}\h^0$ and $\lvec{\V}$ is the left-invariant form on $G,$ whose value at the identity element is $\V$. In this situation, there are natural compatibility conditions between $\V$ and $d\sigma (e)$ involving the Lie algebra cohomology and the modular characters $\chi_\g$ and $\chi_\h$ of $\g$ and $\h$, respectively. 

In this paper, we are interested in a particular class of volume forms on $G/H$, the so-called semi-invariant volume forms \cite{Mos}, for which the function $\sigma$ is multiplicative. Note that if $\sigma$ identically vanishes, then the volume form is invariant under the transitive $G$-action.

Now, let $G/H$ be a coisotropic Poisson homogeneous space with Poisson structure $\Pi$. Then, we prove (Proposition \ref{prop:modular-class}) that the modular vector field of $\Pi$ with respect to $\nu$ is given by 
\[
\M_\nu^\Pi=q_*\left(-\Pi _G^\sharp (d\sigma ) +  \frac12 \left(  \rvec{\chi_{\g^*}} - \lvec{\chi _{\g^*}}+2 \lvec{x_{\h ^0}} + \Pi_G^\sharp 
 (\rvec{\chi_\g})\right)\right), 
 \]
 where $\Pi_G^\#:T^*G\to TG$ is the vector bundle morphism associated with the Poisson-Lie structure on $G$, $\rvec{\chi_{\g^*}}$ (respectively, $\rvec{\chi_{\g}}$) is the right-invariant vector field (respectively, $1$-form) on $G$ whose value at the identity element is $\chi_{\g^*}\in\g$ (respectively, $\chi_{\g}\in \g^*$), 
 $\lvec{\chi_{\g^*}}$ is the left-invariant vector field on $G$ whose value at the identity element of $G$ is $\chi_{\g^*}\in\g$, and
$\lvec{x_{\h^0}}$ is any left-invariant vector field on $G$ whose value $x_{\h^0}$ at the identity element projects on the modular character ${\chi_{\h^0}}\in (\h^0)^*\cong \g/\h$ of $\h^0.$

 
From this expression, it is clear that if the volume form $\nu$ is semi-invariant (i.e., $\sigma$ is multiplicative) and $\h^0$ is unimodular or, in other words, when the coisotropic homogeneous Poisson structure is multiplicative unimodular (Definition \ref{definition:multiplicative-unimodular}), then the modular class is the $q$-projection of  the vector field on $G$ given by
\[
\H ^{\Pi} _\nu= -\Pi _G^\sharp (d\sigma ) + \frac12 \left(  \rvec{\chi_{\g^*}} - \lvec{\chi _{\g^*}} + \Pi_G^\sharp 
 (\rvec{\chi_\g})\right),
 \]
 which is not only a Poisson vector field but also multiplicative. In fact, one of the main results of our  paper is to characterize infinitesimally when this situation happens. More precisely, the main contributions of the paper are: 
 \begin{itemize}
 \item We describe the nature of an arbitrary volume form on $G/H$ in terms of some geometric objects in the Lie group $G$ (Theorem \ref{thm:volume-form}) and we apply this result to the case when the volume form is semi-invariant (Theorem \ref{2.5}). In particular, we characterize infinitesimally when a homogeneous space admits a semi-invariant volume form (Proposition \ref{cor:caract:semi-invariante}). 
 \item We prove that for a unimodular coisotropic Poisson quotient, the Lie algebra $\h^0$ is unimodular (Theorem \ref{th:uni-perp}).
 \item We obtain an infinitesimal characterization of the multiplicative unimodularity (Theorem \ref{thm:caracterizacion}). 
 \item We prove a version, for the more general case of a coisotropic Poisson quotient, of the main result in \cite{gutierrez2023unimodularity} for Poisson-Lie groups (see Section \ref{section1.1} for the result in \cite{gutierrez2023unimodularity}). Namely, if the Hamiltonian system induced by a $H$-Morse function at $eH$ on a coisotropic Poisson homogeneous space $G/H$ preserves a volume form then $\h^0$ is unimodular (Theorem \ref{th:morseh0}). 
 
 \item Throughout the paper, several interesting examples illustratring the theoretical results are presented. In particular: a) we prove that there do not exist any Poisson homogeneous spaces for $SO(3)$ and $SL(2,\R)$ which is unimodular (Section \ref{sec:examples});  b) on the non-unimodular coisotropic homogeneous Poisson space $S^2\cong SO(3)/S^1$, we present a Hamiltonian system with $H$-Morse Hamiltonian function which preserves a $SO(3)$-invariant volume form (Example \ref{Ex:6.5}) ; c) we prove that the Toda lattice Hamiltonian system on the coisotropic homogeneous quotient $SL(n,\R)/SO(n)$ does not preserve a volume form on the homogeneous space (Example \ref{Ex:6.6})  and d) we exhibit a Hamiltonian system (a compartmental epidemiological model \cite{BBG2020physicaD}) on a multiplicative unimodular coisotropic Poisson quotient, which does not admit invariant volume forms for the transitive action associated with the homogeneous space (Example \ref{Ex:6.7}). 

  \end{itemize}

  \subsection{Organization of the paper}

The paper is organized as follows. We first describe, in Section \ref{sec:volume-forms}, (semi-invariant) volume forms on a homogeneous space $G/H$
and characterize them in terms of geometric objects in the Lie group $G.$

 In Section \ref{sec:PLgroup}, we recall the notion of a Poisson-Lie group and a Poisson homogeneous space, describing the infinitesimal objects which characterize them. Then, in Section \ref{sec:unimodularity}, we prove that for a unimodular coisotropic Poisson quotient $(G/H,\Pi)$, the Lie subalgebra $\h^0$ is unimodular and we obtain an infinitesimal characterization of the multiplicative unimodularity. 
 
 In Section \ref{sec:examples}, we discuss the unimodularity of coisotropic Poisson  quotients of the Poisson-Lie groups $SO(3)$ and $SL(2,\R)$. 
 
 Finally, in Section \ref{sec:preservation}, we analyze the relation between the unimodularity of coisotropic Poisson homogeneous spaces and the preservation of volume forms for Hamiltonian systems on such spaces, showing that the relationship is not as simple as the one for Poisson-Lie groups. We also present several examples illustrating the previous fact.

{{\bf Assumption:} All Lie groups considered in this paper are connected. }

\section{Volume forms on homogeneous spaces}\label{sec:volume-forms}
In this section, we will study volume forms on homogeneous spaces and characterize them in terms of functions on the Lie group associated with the homogeneous space. 

Let $G$ be a connected Lie group with Lie algebra $\g$, $\textrm{dim }G=m$, and $H$ be a closed Lie subgroup of $G$ with Lie algebra $\h$,  $\textrm{dim }H=n$. Let us denote by $\phi:G\times G/H\to G/H$ the transitive action of $G$ on $G/H$ induced by the group structure on $G$, $q\colon G\to G/H$ the quotient projection, $\h^0$ the annihilator of the Lie algebra $\h$ and $e$ the indentity element of $G.$ 

If $\nu$ is a volume form on the homogeneous space $M=G/H$ then
\begin{equation}\label{eq:volume-form}
(q^*\nu)(g)\neq 0,\qquad \forall g\in G.    
\end{equation}
On the other hand, as $q^*\nu$ is a $q$-basic form, it follows that
\begin{equation}\label{eq:q-basic}
\left ( i_{\lvec{X}} ( q^*\nu )\right )(g) =0,\qquad \forall  X\in \h, 
\end{equation}
where $\lvec{X}$ is the left-invariant vector field associated with $X$. 
Therefore, $\V =(q^*\nu )(e)\in \wedge^{m-n}\h^0$, $\V\neq 0$, and, from \eqref{eq:volume-form} and \eqref{eq:q-basic}, we deduce that
\[ q^*\nu =f \lvec{\V},\]
with $f\in C^\infty(G)$, $f(g)\neq 0$, for any $g\in G$ and $f(e)=1$. Here $\lvec{\V}$ is the left-invariant $(m-n)$-form on $G$ associated with $\V$. As $G$ is connected, we may suppose that $f(g)>0$, for all $g\in G$, that is $f=e^\sigma$, where $\sigma\in C^\infty (G)$ such that $\sigma (e)=0$. Thus, we can write
\[
q^*\nu =e^\sigma \lvec{\V}.
\]
We will describe in the next results the properties which are satisfied by $\sigma$ and $\V$.

Along the paper, $\chi_\g$ will denote the modular character of the Lie algebra $\g$. We recall that $\chi_\g\in \g^*$ is the trace of the adjoint action $ad^\g\colon \g\times \g\to \g$.  If $\{X_\alpha\}$ is a basis of $\g$ with dual basis $\{X^\alpha\}$ then
\[
\chi_\g =C_{\alpha\beta}^\beta X^\alpha , 
\]
where $C_{\alpha \beta}^\gamma$ are the structure constants of $\g$.

\begin{lemma}\label{lem:lema0}
Let $\g$ be a Lie algebra and $\h$ be a Lie subalgebra, such that $\textrm{dim }\g=m$ and $\textrm{dim }\h=n$. If $\V\in \wedge^{m-n}\h^0$ and $\V\neq 0$, then
\[
d^\g\V=-\theta _0\wedge \V,
\]
where $\theta_0\in \g^*$ satisfies
\[ (\theta _0)_{|\h}=\chi_\g{}_{|\h}-\chi_\h,\]
$\chi_\g{}_{|\h}$ being the restriction of $\chi_\g$ to $\h$ and $d^\g$ the algebraic differential of $\g.$

In addition, if $d^\g \V=-\theta '_0\wedge \V$ with $\theta '_0\in \g^*$ then 
\[ (\theta '_0)_{|\h}=(\theta _0)_{|\h}.\]
Conversely, if there is an element $\theta_0'$ of $\g^*$ such that $(\theta '_0)_{|\h}=(\theta _0)_{|\h}$, then $d^\g\V=-\theta' _0\wedge \V.$
\end{lemma}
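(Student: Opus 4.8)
The plan is to reduce everything to a single contraction computation in the Chevalley--Eilenberg complex of $\g$. The case $\h=\g$ is trivial ($\V$ is then a nonzero scalar, $d^\g\V=0$, and $\chi_\g{}_{|\h}-\chi_\h=0$), so assume $m>n$. I would fix a basis $\{X_1,\dots,X_m\}$ of $\g$ adapted to $\h$, i.e.\ with $\{X_1,\dots,X_n\}$ a basis of $\h$, and let $\{X^1,\dots,X^m\}$ be the dual basis, so that $\h^0=\langle X^{n+1},\dots,X^m\rangle$ and $\wedge^{m-n}\h^0=\R\,(X^{n+1}\wedge\cdots\wedge X^m)$. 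Since every identity in the statement is linear in $\V$, I may rescale and take $\V=X^{n+1}\wedge\cdots\wedge X^m$. The first step is to compute $i_X(d^\g\V)$ for $X\in\h$. Because $\h$ is a subalgebra, the coadjoint action of $\g$ on $\g^*$ restricted to $\h$ preserves $\h^0$ (for $X,Y\in\h$ and $\xi\in\h^0$ one has $(ad^*_X\xi)(Y)=-\xi([X,Y])=0$), hence acts on the line $\wedge^{m-n}\h^0$ through a character $\lambda\in\h^*$. Using the Cartan formula $\Lie_X=i_X d^\g+d^\g i_X$ in the Chevalley--Eilenberg complex, together with $i_X\V=0$ for $X\in\h$, this gives $i_X(d^\g\V)=\Lie_X\V=\lambda(X)\,\V$. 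To identify $\lambda$, I would use that $ad^*_X$ preserves $\h^0$ and induces on $\g^*/\h^0\cong\h^*$ the coadjoint action of $\h$, so by additivity of traces
\[
\lambda(X)=\Tr\big(ad^*_X|_{\h^0}\big)=\Tr\big(ad^*_X|_{\g^*}\big)-\Tr\big(ad^*_X|_{\h^*}\big)=-\chi_\g(X)+\chi_\h(X).
\]
(Equivalently, a direct structure-constant computation works, again using $[\h,\h]\subseteq\h$ to kill the off-diagonal terms.) Thus $i_X(d^\g\V)=-(\chi_\g-\chi_\h)(X)\,\V$ for all $X\in\h$.

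The second step is to upgrade this to $d^\g\V\in\g^*\wedge\V$. Expanding $d^\g\V\in\wedge^{m-n+1}\g^*$ in the monomial basis, a basis monomial of degree $m-n+1$ is of the form $\pm X^j\wedge\V$ with $j\le n$ exactly when it involves all of the covectors $X^{n+1},\dots,X^m$; every other such monomial involves at least two of $X^1,\dots,X^n$, hence has some index $k\le n$ for which $i_{X_k}(\cdot)$ is $\pm$ a monomial still involving a covector $X^i$ with $i\le n$, in particular not a multiple of $\V$. Since $i_{X_k}$ sends distinct $k$-involving monomials to distinct basis monomials, and the only degree-$(m-n+1)$ monomial with $i_{X_k}(\cdot)=\pm\V$ is $X^k\wedge\V$ itself, the fact (from the first step) that $i_{X_k}(d^\g\V)\in\R\,\V$ for every $k\le n$ forces the coefficient of every such ``bad'' monomial in $d^\g\V$ to vanish. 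Hence $d^\g\V=\sum_{j\le n}a_j\,X^j\wedge\V$, and contracting once more with $X_k$ (using $i_{X_k}\V=0$) gives $a_k\,\V=-(\chi_\g-\chi_\h)(X_k)\,\V$. Setting $\theta_0:=\sum_{j\le n}(\chi_\g-\chi_\h)(X_j)\,X^j\in\g^*$ (any extension of $\chi_\g{}_{|\h}-\chi_\h$ to $\g$ works) then yields $\theta_0\wedge\V=-d^\g\V$ with $(\theta_0)_{|\h}=\chi_\g{}_{|\h}-\chi_\h$, which is the first assertion.

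For the remaining two assertions I would observe that for an arbitrary $\theta_0'\in\g^*$ its $\h^0$-component wedges to zero against $\V$, so $\theta_0'\wedge\V=\sum_{j\le n}\theta_0'(X_j)\,X^j\wedge\V$; since $\{X^j\wedge\V:1\le j\le n\}$ is linearly independent, the equation $d^\g\V=-\theta_0'\wedge\V$ holds if and only if $\theta_0'(X_j)=-a_j=(\chi_\g-\chi_\h)(X_j)$ for all $j\le n$, i.e.\ if and only if $(\theta_0')_{|\h}=(\theta_0)_{|\h}$; this gives both the uniqueness statement and its converse at once. The only genuinely non-formal point is the second step, namely showing that $d^\g\V$ is ``divisible by $\V$'' (carries no component along monomials involving two or more covectors dual to $\h$); everything else is a trace computation and elementary linear algebra.
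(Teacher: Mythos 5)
Your proof is correct, and it follows the same overall strategy as the paper's: work in a basis of $\g^*$ adapted to $\h^0$, show that $d^\g\V$ has the form $-\theta_0\wedge\V$, identify $(\theta_0)_{|\h}$ as $\chi_\g{}_{|\h}-\chi_\h$, and finally observe that $\theta_0\wedge\V$ only depends on $\theta_0$ modulo $\h^0$, which gives both the uniqueness of the restriction and its converse at once. The difference lies in how the two key steps are executed. For the divisibility step, the paper evaluates $d^\g\V$ directly on tuples containing two vectors of $\h$ and checks, via the Chevalley--Eilenberg formula and $[\h,\h]\subseteq\h$, that every term vanishes; you instead deduce $i_X(d^\g\V)\in\R\,\V$ for $X\in\h$ from Cartan's formula together with the fact that the coadjoint action of $\h$ preserves the line $\wedge^{m-n}\h^0$, and then run a monomial-counting argument to rule out components involving two covectors dual to $\h$. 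For the identification of the coefficient, the paper computes $-\sum_j Y^j([X_i,Y_j])$ explicitly, whereas you recognize this quantity invariantly as $\Tr\bigl(ad^*_X|_{\h^0}\bigr)$ and evaluate it by additivity of traces along $0\to\h^0\to\g^*\to\h^*\to 0$; the signs agree in both versions. Your route is more conceptual --- it isolates exactly where the subalgebra hypothesis enters, namely in the invariance of $\h^0$ --- at the cost of a longer combinatorial argument for divisibility, while the paper's version is shorter but states the vanishing on tuples with two $\h$-entries essentially as an assertion. There is no gap in either step of your argument.
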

\begin{proof}
Using that $\V\neq 0$, it follows that
\[\V=Y^1\wedge \ldots \wedge Y^{m-n},\]
where $Y^1,\ldots ,Y^{m-n}\in \h^0$. It is clear that $\{Y^1,\ldots ,Y^{m-n}\}$ are linearly independent and we can consider a basic of  $\g^*$ of the form
\[
\{ X^1,\ldots, X^n,Y^1,\ldots ,Y^{m-n}\}.
\]
Let $\{ X_1,\ldots, X_n,Y_1,\ldots ,Y_{m-n}\}$ be the dual basis on $\g$. Since $\{X_1,\ldots, X_n\}$ is a basis of $\h$ and $\V\in \wedge^{m-n}\h^0$, it follows that 
\[
d^\g\V(Z_0,Z_1,\ldots, Z_{m-n})=0, \qquad \mbox{ if $Z_i,Z_j \in \h,$ with $i\not=j.$}
\]
This implies that
\begin{equation}\label{eq:wedge-chi0}
d^\g \V = \Big ( \sum _{i} (d^\g \V) (X_i,Y_1,\ldots ,Y_{m-n})X^i\Big  )\wedge \V.  
\end{equation}
So, we can take $\theta_0=-\displaystyle\sum _{i} (d^\g \V) (X_i,Y_1,\ldots ,Y_{m-n})X^i\in \g^*$ and  we must prove that
\[
-\Big ( \sum _{i} (d^\g \V) (X_i,Y_1,\ldots ,Y_{m-n})X^i\Big ) _{|\h}=
\chi _\g{}_{|\h}-\chi _\h .
\]
Now, it follows that
\begin{eqnarray*}
(d^\g \V) (X_i,Y_1,\ldots ,Y_{m-n})&=&\sum_{j} (-1)^j 
\V ( [X_i,Y_j],Y_1,\ldots ,\hat{Y}_j,\ldots ,Y_{m-n})\\
&=& - \sum_{j} \V ( Y_1,\ldots ,[X_i,Y_j],\ldots ,Y_{m-n})\\
&=& - \sum_{j} Y^j ([X_i,Y_j])=-\left ( \chi_\g(X_i)-\chi_\h(X_i) \right ),
\end{eqnarray*}
where we have used that $\V(Y_1,\ldots ,Y_{m-n})=1$ and the definition of  $d^\g.$ Thus, from \eqref{eq:wedge-chi0}, we deduce that
\[ d^\g\V =-\theta _0\wedge \V,\]
with $\theta _0(X_i)=\chi_\g(X_i)-\chi_\h(X_i),$ for all $X_i\in \h$.

In addition, if we consider another $\theta '_0\in \g^*$ such that
\[ d^\g \V=-\theta '_0\wedge \V\]
then, since $\V\in \wedge^{m-n}\h^0$,  it is clear that  $\theta_0\wedge \V=\theta '_0\wedge \V$ if and only if $(\theta '_0)_{|\h}=(\theta _0)_{|\h}=\chi_\g{}_{|\h}-\chi_\h$.
\end{proof}

Now, we can describe the volume forms on the homogeneous space $M=G/H$. 

\begin{theorem}\label{thm:volume-form}
Let $\nu$ be a $(m-n)$-form on the homogeneous space $M=G/H$ and $q\colon G\to G/H$ be the canonical projection. If $\nu$ is a volume form on $G/H$ then there is a function $\sigma\in C^\infty(G)$ and $\V\in \wedge^{m-n}\h^0$ such that 
\begin{equation}\label{eq:Key-formula}
    q^*\nu =e^\sigma \lvec{\V},
\end{equation}
with 
\begin{itemize}
    \item[i)] $\sigma(e)=0$, $\lvec{X}(\sigma)=X(\sigma)$, for all $X\in\h$, and $d\sigma (e)_{|\h}=\chi_\g{}_{|\h}-\chi_\h$;
    \item[ii)]  $\V\neq 0$ and $d^\g \V=-d\sigma (e)\wedge \V$.
\end{itemize}
Conversely, if $\sigma\in C^\infty(G)$ and $\V\in \wedge^{m-n}\h^0$, satisfy \textit{i)} and \textit{ii)} then there exists a unique volume form $\nu$ on $G/H$ which is characterized by condition \eqref{eq:Key-formula}.
\end{theorem}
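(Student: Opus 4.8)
The plan is to handle both implications with a single computation: expressing when the nowhere-vanishing $(m-n)$-form $e^{\sigma}\lvec{\V}$ on $G$ descends to $G/H$ in terms of the pair $(\sigma,\V)$.

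For the direct implication I would start from the normal form
\[
q^{*}\nu = e^{\sigma}\lvec{\V},\qquad \sigma(e)=0,\qquad \V=(q^{*}\nu)(e)\in\wedge^{m-n}\h^{0},\quad \V\neq0,
\]
which is exactly the content of the discussion preceding the statement (see \eqref{eq:volume-form}--\eqref{eq:q-basic}). It then remains to read i) and ii) off from the fact that $q^{*}\nu$ is $q$-basic. The vertical distribution of $q$ is spanned over $C^{\infty}(G)$ by the left-invariant vector fields $\lvec{X}$, $X\in\h$, so being $q$-basic is equivalent to $i_{\lvec{X}}(q^{*}\nu)=0$ and $\mathcal{L}_{\lvec{X}}(q^{*}\nu)=0$ for all $X\in\h$. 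The first holds automatically, since $i_{X}\V=0$ whenever $\V\in\wedge^{m-n}\h^{0}$ and $X\in\h$. For the second I would use that, on left-invariant tensors, the exterior derivative and the contractions restrict to the Chevalley--Eilenberg differential $d^{\g}$ and to $i_{X}$, so Cartan's formula gives
\[
\mathcal{L}_{\lvec{X}}(e^{\sigma}\lvec{\V}) = e^{\sigma}\,\lvec{X}(\sigma)\,\lvec{\V} + e^{\sigma}\,\lvec{(i_{X}d^{\g}\V)}.
\]
By Lemma~\ref{lem:lema0}, $d^{\g}\V=-\theta_{0}\wedge\V$ with $(\theta_{0})_{|\h}=\chi_\g{}_{|\h}-\chi_\h$, hence $i_{X}d^{\g}\V=-\theta_{0}(X)\V$; since $e^{\sigma}\lvec{\V}$ is nowhere zero, vanishing of $\mathcal{L}_{\lvec{X}}(q^{*}\nu)$ forces the function $\lvec{X}(\sigma)$ to be constant, equal to its value $d\sigma(e)(X)=\theta_{0}(X)$ at $e$, for each $X\in\h$. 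This is the condition $\lvec{X}(\sigma)=X(\sigma)$ together with $d\sigma(e)_{|\h}=(\theta_0)_{|\h}=\chi_\g{}_{|\h}-\chi_\h$, i.e.\ i); and since $d\sigma(e)$ and $\theta_{0}$ then agree on $\h$, the last clause of Lemma~\ref{lem:lema0} yields $d^{\g}\V=-d\sigma(e)\wedge\V$, i.e.\ ii).

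For the converse I would set $\alpha:=e^{\sigma}\lvec{\V}$, which is nowhere zero, and run the same computation backwards. Again $i_{\lvec{X}}\alpha=0$ for $X\in\h$ because $\V\in\wedge^{m-n}\h^{0}$; and condition ii) turns the displayed identity above into $i_{X}d^{\g}\V=-d\sigma(e)(X)\V$, so that
\[
\mathcal{L}_{\lvec{X}}\alpha = (\lvec{X}(\sigma)-d\sigma(e)(X))\,\alpha = 0
\]
by condition i). Thus $\alpha$ is horizontal and its Lie derivatives along a spanning family of vertical fields vanish; the standard descent criterion for submersions (here the fibres of $q$ are the connected cosets $gH$) then produces a unique $(m-n)$-form $\nu$ on $G/H$ with $q^{*}\nu=\alpha$, uniqueness being immediate from the injectivity of $q^{*}$. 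Finally $\nu$ is nowhere zero, hence a volume form: if $\nu$ vanished at some point $q(g)$ then $\alpha(g)=(q^{*}\nu)(g)$ would vanish as well, contradicting $\alpha\neq 0$; and \eqref{eq:Key-formula} holds by construction.

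The genuinely new input is Lemma~\ref{lem:lema0}, which has already been proved; everything else is Cartan calculus on $G$ and bookkeeping. The two points requiring some care are the identification of $d$, $i_{\lvec{X}}$ and $\mathcal{L}_{\lvec{X}}$ on left-invariant forms with their algebraic counterparts $d^{\g}$, $i_{X}$ and $\mathcal{L}^{\g}_{X}$ (so that Lemma~\ref{lem:lema0} can be substituted in), and the descent step in the converse, where one needs that checking horizontality together with the vanishing of $\mathcal{L}_{\lvec{X}}$ on the generators $\lvec{X}$, $X\in\h$, of the vertical distribution is enough for $q$-basicity. I expect this last point to be the most delicate, but it is routine once the fibres of $q$ are identified with $H$, so there is no serious obstacle.
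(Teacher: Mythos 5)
Your proof is correct and follows essentially the same route as the paper: the same normal form $q^*\nu=e^\sigma\lvec{\V}$, the same Cartan-calculus verification of $q$-basicity (with connectedness of the fibres $gH$ for the descent), and Lemma \ref{lem:lema0} as the key input, with the converse argued identically. The only (harmless) variation is in the forward direction, where the paper obtains $d^\g\V=-d\sigma(e)\wedge\V$ directly from $d(q^*\nu)=0$ and then reads off the restriction to $\h$, whereas you first extract $d\sigma(e)_{|\h}=(\theta_0)_{|\h}$ from the Lie-derivative condition and then invoke the last clause of Lemma \ref{lem:lema0}.
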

\begin{proof}
If $\nu$ is a volume form on $G/H$, we have already seen that 
\[  q^*\nu =e^\sigma \lvec{\V},\] 
where $\sigma\in C^\infty(G), \sigma(e)=0$ and $\V\in \wedge^{m-n}\h^0,\V\not=0.$

Moreover, 
\[0=q^*(d \nu)=d(q^*\nu )=e^\sigma (d\sigma\wedge \lvec{\V}+\lvec{d^\g\V}). \]
Thus, we have that $\lvec{d^\g\V}=-d\sigma\wedge \lvec{\V}$. 
In particular,
\[
d^\g\V=-d\sigma(e) \wedge \V.
\]
Therefore, from Lemma \ref{lem:lema0}, we obtain that
\[ d\sigma (e)_{|\h}= \chi_\g{}_{|\h} -\chi_\h.  \]
In addition,  using that $e^\sigma\lvec{\V}$ is a $q$-basic form, we have that, for every $X\in \h$,
\[
0=\mathcal{L}_{\lvec{X}}(e^\sigma \lvec{\V})=e^\sigma \Big ( 
\lvec{X}(\sigma )\lvec{\V}+\mathcal{L}_{\lvec{X}} \lvec{\V}
\Big ).
\]
This implies that 
\[ \lvec{X}(\sigma )\lvec{\V}=-\mathcal{L}_{\lvec{X}} \lvec{\V}, \qquad  \forall X\in \h.\]
So, $\lvec{X}(\sigma )\lvec{\V}$ must be a left-invariant $(m-n)$-form and, thus, $\lvec{X}(\sigma)$ is constant.

Conversely, suppose that $\sigma\in C^\infty (G)$ and $\V\in \wedge^{m-n}\h^0$ satisfy \textit{i)} and \textit{ii)}. Then, we can consider the $(m-n)$-form on $G$ given by $e^\sigma \lvec{\V}$. We must prove that $e^\sigma\lvec{\V}$ is $q$-basic, that is, 
\begin{equation}\label{eq:caract-q-basic}
    i_{\lvec{X}}(e^\sigma\lvec{\V} )=0 \quad \text{ and } \quad  \mathcal{L}_{\lvec{X}} (e^\sigma\lvec{\V} )=0, \qquad \forall X\in \h.
\end{equation}
The first equation is trivial because $\V\in \wedge^{m-n}\h^0$, $X\in \h$ and $i_{\lvec{X}}\lvec{\V}=\lvec{i_X\V}=0$. 

On the other hand, using the hypotheses \textit{i)} and \textit{ii)}, 
\begin{eqnarray*}
    \mathcal{L}_{\lvec{X}} (e^\sigma\lvec{\V} )&=&i_{\lvec{X}}(d(e^\sigma\lvec{\V} )) = i_{\lvec{X}} (e^\sigma (d\sigma\wedge \lvec{\V}+\lvec{d^\g\V}))\\
    &=&  i_{\lvec{X}} (e^\sigma (d\sigma\wedge \lvec{\V}-
    \lvec{d\sigma(e)}\wedge \lvec{\V} ))=e^\sigma (X(\sigma )\lvec{\V}-
    X(\sigma )\lvec{\V})=0.
\end{eqnarray*}

Since both equations in \eqref{eq:caract-q-basic} hold, we deduce that there exists a unique $(m-n)$-form $\nu$ on $G/H$ such that $q^*\nu=e^\sigma \lvec{\V}$. Finally, using that $e^{\sigma(g)}\lvec{\V}(g)\neq 0$, for any $g\in G$, and the fact that $q^*$ is an injective morphism, we conclude that $\nu$ is a volume form on $G/H$.
\end{proof}
Next, we recall a natural kind of volume forms on  homogeneous spaces.
\begin{definition}[\cite{Mos}]\label{def:semi-inv}
Let $G$ be a connected Lie group of dimension $m$ and $H$ be a closed Lie subgroup of $G$ of dimension $n$. A volume form $\nu$ on $G/H$ is said to be \textit{semi-invariant} if 
\[
\phi _g^*\nu=c(g)\nu , \qquad \forall g\in G,
\]
with $c\colon G\to \R^+$ a smooth function, where $\phi\colon G\times G/H\to G/H$ is the transitive action of $G$ on $G/H$. If $c(g)= 1,$ for all $g\in G$, then $\nu$ is $G$-invariant.
\end{definition}
\begin{remark}
Note that, from Definition \ref{def:semi-inv}, it is deduced that
$c\colon G\to (\R^+,\cdot)$ is a Lie group morphism, when we consider the multiplicative structure of $\R^+$, that is,
\[
c(gh)=c(g)c(h), \qquad \forall g,h\in G.
\]
Indeed,
\[
c(gh) \nu (gh)=\phi _{gh}^*\nu (e)=\phi ^*_h(\phi _{g}^*\nu (e) )=\phi ^*_h(c(g) \nu (g))=c(g)c(h)\nu (gh).
\]
So, $c(gh)=c(g)c(h).$ Note that if  we consider the function $\gamma\colon G\to \R$ characterized by $c=e^\gamma$, then
\[
\gamma (gh)=\gamma (g)+\gamma (h),
\] 
i.e., $\gamma$ is a multiplicative function.
\end{remark}
Using the description of volume forms obtained in Theorem \ref{thm:volume-form} and the previous remark, we can characterize semi-invariant volume forms.
\begin{theorem}\label{2.5}
A volume form $\nu$ on the homogeneous space $M=G/H$ is semi-invariant if and only if the function $\sigma\colon G\to \R$ associated with $\nu$ (deduced from Theorem \ref{thm:volume-form})
is multiplicative.
\end{theorem}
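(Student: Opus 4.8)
The plan is to transfer the semi-invariance condition, which lives on $G/H$, to a condition on $\sigma$, which lives on $G$, by pulling everything back along the quotient map $q$ and exploiting its $G$-equivariance together with the injectivity of $q^*$. The key identity is that $q\circ L_g=\phi_g\circ q$ for every $g\in G$, where $L_g$ denotes left translation on $G$; hence, using $q^*\nu=e^\sigma\,\lvec{\V}$ from Theorem \ref{thm:volume-form} and the left-invariance of $\lvec{\V}$,
\[
q^*(\phi_g^*\nu)=L_g^*(q^*\nu)=L_g^*(e^\sigma\,\lvec{\V})=(e^{\sigma\circ L_g})\,L_g^*\lvec{\V}=e^{\sigma\circ L_g}\,\lvec{\V},
\]
where $(\sigma\circ L_g)(x)=\sigma(gx)$.

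For the ``only if'' direction I would assume $\nu$ is semi-invariant, so that $\phi_g^*\nu=c(g)\nu$ with $c\colon G\to\R^+$ smooth, and write $c=e^\gamma$. Pulling back by $q$ and comparing with the displayed identity gives $e^{\sigma(gx)}\lvec{\V}(x)=c(g)\,e^{\sigma(x)}\lvec{\V}(x)$ for all $g,x\in G$; since $\lvec{\V}$ is nowhere vanishing, this yields $\sigma(gx)=\sigma(x)+\gamma(g)$ for all $g,x\in G$. Evaluating at $x=e$ and invoking the normalization $\sigma(e)=0$ from Theorem \ref{thm:volume-form} forces $\gamma(g)=\sigma(g)$, and therefore $\sigma(gx)=\sigma(g)+\sigma(x)$, i.e. $\sigma$ is multiplicative.

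For the converse I would assume $\sigma$ multiplicative and set $c:=e^\sigma\colon G\to\R^+$, which is smooth and positive. Then $\sigma\circ L_g=\sigma(g)+\sigma$, so $e^{\sigma\circ L_g}\,\lvec{\V}=c(g)\,e^\sigma\,\lvec{\V}=c(g)\,q^*\nu=q^*(c(g)\nu)$, and the displayed identity gives $q^*(\phi_g^*\nu)=q^*(c(g)\nu)$. Since $q$ is a surjective submersion, $q^*$ is injective on forms, so $\phi_g^*\nu=c(g)\nu$ and $\nu$ is semi-invariant.

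There is no serious obstacle here; the content is essentially bookkeeping. The point deserving a little care is the equivariance relation $q\circ L_g=\phi_g\circ q$ and the ensuing reduction $q^*(\phi_g^*\nu)=L_g^*(q^*\nu)$, combined with the injectivity of $q^*$, since this is precisely what moves the whole question onto $G$ and lets the non-vanishing of $\lvec{\V}$ do the work. In the ``if'' direction one should also observe that the candidate multiplier $c=e^\sigma$ is automatically smooth and $\R^+$-valued, so it genuinely meets the requirements of Definition \ref{def:semi-inv}; and the normalization $\sigma(e)=0$ is what pins down $\gamma=\sigma$ in the ``only if'' direction.
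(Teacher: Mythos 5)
Your proof is correct and follows essentially the same route as the paper: both arguments rest on the equivariance relation $q\circ L_g=\phi_g\circ q$, the identity $L_g^*(e^\sigma\lvec{\V})=e^{\sigma\circ L_g}\lvec{\V}$, the non-vanishing of $\lvec{\V}$, the normalization $\sigma(e)=0$, and the injectivity of $q^*$. The only cosmetic difference is that you derive the multiplicativity of $\sigma$ directly from the relation $\sigma(gx)=\sigma(x)+\gamma(g)$, whereas the paper phrases the conclusion as the equivalence between $c$ being a group morphism and $\sigma$ being multiplicative, relying on the preceding remark that $c$ is automatically a morphism.
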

\begin{proof}

If $\phi\colon G\times G/H\to G/H$ is the transitive action of $G$ on $G/H$, $g\in G$, and $L_g\colon G\to G$ is the left-translation by $g\in G$, it follows that $\phi _g\circ q=q\circ L_g$. Therefore, using that $q^*$ is an injective morphism, we deduce that 
\[\phi _g^*\nu = c(g)\nu,\qquad \forall g\in G, \]
if and only if 
\[ c(g) q^*\nu = q^* (\phi _g^*\nu)=L^*_g (q^*\nu),\qquad \forall g\in G, \]
or, equivalently,
\begin{equation}\label{eq:formula}
c(g)(e^\sigma\lvec{\V})=e^{\sigma\circ L_g}\lvec{\V}, \qquad \forall g\in G .
\end{equation}
Therefore, since $\V\neq 0$, \eqref{eq:formula} holds if and only if
\begin{equation}\label{eq:casi-multiplicativo}
    \sigma\circ L_g=\sigma +\log (c(g)), \qquad \forall g\in G .
\end{equation}
But, using that $\sigma (e)=0$, we conclude that \eqref{eq:casi-multiplicativo} is equivalent to 
\[
\sigma (g)=\log (c(g)).
\]
Consequently, $c$ is a Lie group morphism if and only if $\sigma$ is multiplicative. 
\end{proof}
These previous results allow us to characterize whether a homogeneous space admits semi-invariant volume forms. 
\begin{proposition}\label{cor:caract:semi-invariante}
The homogeneous space $M=G/H$ admits a semi-invariant volume form if and only if the 1-cocycle $\chi_\g{}_{|\h}-\chi_\h$ on $\h$ admits a 1-cocycle extension
$\theta_0$ to $\g$ and the closed left-invariant 1-form $\lvec{\theta_0}$ on $G$ is exact. Moreover, in such a case, a semi-invariant volume form $\nu$ on $M$ is characterized by the condition $q^*\nu=e^\sigma\lvec{\V}$, where $\sigma\colon G\to \R$ is the unique multiplicative function on $G$ satisfying $\lvec{\theta_0}=d\sigma$ and $\V\in\wedge^{m-n}\h^0$, $\V\neq 0$ and $d^\g \V=-d\sigma (e)\wedge \V$.
\end{proposition}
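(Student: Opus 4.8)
The statement is an iff together with a description of the witness, and it follows almost directly from combining Theorem \ref{thm:volume-form} with Theorem \ref{2.5}. The plan is to unwind both directions through the existence of a multiplicative $\sigma$ whose differential at $e$ is the prescribed $1$-cocycle.

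First I would prove necessity. Suppose $\nu$ is a semi-invariant volume form on $M=G/H$. By Theorem \ref{2.5} the associated function $\sigma\colon G\to\R$ (from Theorem \ref{thm:volume-form}) is multiplicative, hence $\lvec{d\sigma(e)}=d\sigma$ is left-invariant; since $d\sigma$ is exact it is closed, so $\theta_0:=d\sigma(e)\in\g^*$ satisfies $d^\g\theta_0=0$, i.e. $\theta_0$ is a $1$-cocycle on $\g$. By part i) of Theorem \ref{thm:volume-form}, $(\theta_0)_{|\h}=\chi_\g{}_{|\h}-\chi_\h$, so $\theta_0$ is a $1$-cocycle extension of the $1$-cocycle $\chi_\g{}_{|\h}-\chi_\h$ on $\h$ (this latter is a cocycle on $\h$ by Lemma \ref{lem:lema0} applied with $\g$ replaced by $\h$, or one simply observes $\chi_\h$ is closed and the restriction of a closed form stays closed on a subalgebra — I should double check that $\chi_\g{}_{|\h}$ restricted is $d^\h$-closed, which it is since $d^\h$ of any element of $\h^*$ coming from restriction of a $d^\g$-closed form vanishes). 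Finally $\lvec{\theta_0}=d\sigma$ is exact by construction. This gives the forward implication and identifies the cocycle extension.

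For sufficiency, assume $\chi_\g{}_{|\h}-\chi_\h$ admits a $1$-cocycle extension $\theta_0$ to $\g$ with $\lvec{\theta_0}$ exact, say $\lvec{\theta_0}=d\sigma$. Since $G$ is connected and $\lvec{\theta_0}$ is left-invariant, $\sigma$ is determined up to an additive constant, which I fix by $\sigma(e)=0$; then $\sigma$ is multiplicative (a primitive of a left-invariant exact $1$-form with value $0$ at $e$ is a Lie group morphism into $\R$ — this is the standard fact used implicitly in the Remark after Definition \ref{def:semi-inv}, and I would spell it out: $d(\sigma\circ L_g-\sigma)=L_g^*d\sigma-d\sigma=L_g^*\lvec{\theta_0}-\lvec{\theta_0}=0$ since $\lvec{\theta_0}$ is left-invariant, so $\sigma\circ L_g-\sigma$ is the constant $\sigma(g)$). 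Now I must produce $\V$. I choose any nonzero $\V\in\wedge^{m-n}\h^0$; by Lemma \ref{lem:lema0} there is $\theta_0'\in\g^*$ with $d^\g\V=-\theta_0'\wedge\V$ and $(\theta_0')_{|\h}=\chi_\g{}_{|\h}-\chi_\h=(\theta_0)_{|\h}$. By the converse part of Lemma \ref{lem:lema0}, since $(\theta_0)_{|\h}=(\theta_0')_{|\h}$ we get $d^\g\V=-\theta_0\wedge\V=-d\sigma(e)\wedge\V$. Thus $\sigma$ and $\V$ satisfy conditions i) and ii) of Theorem \ref{thm:volume-form}, so there is a (unique) volume form $\nu$ on $M$ with $q^*\nu=e^\sigma\lvec{\V}$; since $\sigma$ is multiplicative, Theorem \ref{2.5} gives that $\nu$ is semi-invariant. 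The description in the "moreover" clause is exactly what this construction yields, noting uniqueness of the multiplicative $\sigma$ with $d\sigma=\lvec{\theta_0}$ and $\sigma(e)=0$.

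The only genuinely non-routine point is the claim that $\chi_\g{}_{|\h}-\chi_\h$ is itself a $1$-cocycle on $\h$ (so that "cocycle extension" is the right phrasing): $\chi_\h$ is $d^\h$-closed because it is a modular character, and $\chi_\g{}_{|\h}$ is $d^\h$-closed because for $X,Y\in\h$ one has $d^\h(\chi_\g{}_{|\h})(X,Y)=-\chi_\g([X,Y])$ and $[X,Y]\in\h$ with $\chi_\g$ evaluated on a commutator inside $\g$ — this needs the observation that $\chi_\g$ restricted to the derived subalgebra vanishes, which holds since $\chi_\g$ is $d^\g$-closed. I would state this as a one-line remark. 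Everything else is a direct assembly of Lemma \ref{lem:lema0}, Theorem \ref{thm:volume-form}, and Theorem \ref{2.5}, with the connectedness of $G$ used precisely once, to pass from the closed exact $1$-form $\lvec{\theta_0}$ to a globally defined multiplicative primitive.
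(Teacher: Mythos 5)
Your proposal is correct and follows essentially the same route as the paper's proof: both directions are obtained by assembling Lemma \ref{lem:lema0}, Theorem \ref{thm:volume-form} and Theorem \ref{2.5}, taking $\theta_0=d\sigma(e)$ in the forward direction and, conversely, integrating $\lvec{\theta_0}$ to the multiplicative primitive $\sigma$ and choosing any nonzero $\V\in\wedge^{m-n}\h^0$. The extra details you supply (why the primitive of $\lvec{\theta_0}$ with $\sigma(e)=0$ is multiplicative, and why $\chi_\g{}_{|\h}-\chi_\h$ is a $1$-cocycle on $\h$) are correct elaborations of points the paper leaves implicit, not a different argument.
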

\begin{proof}
    If $\nu$ is a semi-invariant volume form then, using Theorems \ref{thm:volume-form} and \ref{2.5}, there is a multiplicative function $\sigma\in C^\infty(G)$ such that 
    \[ q^*\nu = e^\sigma \lvec{\V}, \]
    with $\V\in \wedge^{m-n}\h^0$ and $d\sigma(e)_{|\h}=\chi_\g{}_{|\h}-\chi_\h$. 
    Now, it is enough to take $\theta_0=d\sigma (e)$. In this case, $\theta_0{}_{|\h}=\chi_\g{}_{|\h}-\chi_\h$ and $\lvec{\theta_0}=d\sigma$. Note that the last equality is a consequence of the fact that $\sigma$ is multiplicative.

    Conversely, from the hypothesis, there is a unique  multiplicative function $\sigma\in C^\infty (G)$ such that $\lvec{\theta_0}=d\sigma$ and 
    \[d\sigma(e)_{|\h}=\chi_\g{}_{|\h}-\chi_\h. \]
    Using that $\sigma$ is multiplicative,  we have $\sigma (e)=0$ and $\lvec{X}(\sigma)=X(\sigma)$. Let $\V\in\wedge^{m-n}\h^0$ such that $\V\neq 0$, then (see Lemma \ref{lem:lema0}) there is $\theta_0'\in\g^*$ such that 
    \[ d^\g \V =-\theta_0'\wedge \V\]
    and $\theta_0'{}_{|\h}=\chi_\g{}_{|\h}-\chi_\h$. Since $\theta_0'{}_{|\h}=d\sigma (e)_{|\h}$ then (see Lemma \ref{lem:lema0})
    \[ d^\g \V =-d\sigma (e)\wedge \V.\]
    Now, using Theorem \ref{thm:volume-form}, there is a unique volume form $\nu$ on $G/H$ such that
    \[q^*\nu =e^\sigma \lvec{\V}. \]
    Since $\sigma$ is multiplicative (see Proposition \ref{2.5}) then $\nu$ is semi-invariant.
\end{proof}
\begin{remark}\label{rem-2.7}
    \begin{itemize}
        \item[i)] If $\g=[\g,\g]$ (for instance, if $\g$ is semisimple) there are not non-trivial 1-cocycles and, in particular, $\chi_\g=0$. If, additionally,
        $\h$ is not unimodular then the homogeneous space $M=G/H$  does not admit semi-invariant volume forms. 
        \item[ii)] If $\h$ is unimodular then the homogeneous space $M=G/H$ admits semi-invariant volume forms. In fact, the 1-cocycle $\chi_\g$ is an extension of $\chi_\g{}_{|\h}$ and $\lvec{\chi_\g}=d (\log (\det Ad ))$.
        \item[iii)] Note that if the first de Rham cohomology group of $G$ vanishes (for instance, when the Lie group $G$ is connected and simply-connected), any  1-cocycle $\theta_0$ can be integrated to a multiplicative function $\sigma : G \to \mathbb R$.
    \end{itemize}
    \label{rem:hunim}
\end{remark}
\begin{corollary}\label{cor:formas-invariantes}
Let $M=G/H$ be a homogeneous space. Then, the following conditions are equivalent:
\begin{itemize}
    \item[i)] $M$ admits an invariant volume form.
    \item[ii)] There exists $\V\in\wedge^{m-n}\h^0$, $\V\neq 0$ and $d^\g\V=0$. 
    \item[iii)] The restriction of $\chi_\g$ to $\h$ coincides with $\chi_\h$.
\end{itemize}
Moreover, if \textit{ii)} holds then an invariant volume form $\nu$ on $G/H$ is characterized by the condition $q^*\nu=\lvec{\V}$.
\label{cor:invariantvf}
\end{corollary}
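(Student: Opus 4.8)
The plan is to obtain all three equivalences, and the final characterization, from Theorem~\ref{thm:volume-form}, Theorem~\ref{2.5} and Lemma~\ref{lem:lema0}, via a single observation: a volume form on $G/H$ is $G$-invariant exactly when it is semi-invariant with multiplier $c\equiv 1$, which by the proof of Theorem~\ref{2.5} (where $\sigma=\log c$) amounts to the associated function being $\sigma\equiv 0$. I would also note at the start that $\dim\h^0=m-n$, so $\wedge^{m-n}\h^0$ is one-dimensional; hence a nonzero $\V$ exists and is unique up to a nonzero factor, which makes both statement~ii) and the characterization $q^*\nu=\lvec{\V}$ unambiguous. I would then run the cycle i)~$\Rightarrow$~ii)~$\Rightarrow$~iii)~$\Rightarrow$~i).

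For \emph{i)~$\Rightarrow$~ii)}: if $\nu$ is $G$-invariant it is semi-invariant with $c\equiv 1$, so by the proof of Theorem~\ref{2.5} the function $\sigma$ of Theorem~\ref{thm:volume-form} vanishes identically and $q^*\nu=\lvec{\V}$ with $0\neq\V=(q^*\nu)(e)\in\wedge^{m-n}\h^0$; condition ii) of Theorem~\ref{thm:volume-form} then reads $d^\g\V=-d\sigma(e)\wedge\V=0$. The same computation gives the last assertion of the corollary: any invariant $\nu$ equals the form with $q^*\nu=\lvec{\V}$ for $\V=(q^*\nu)(e)$, and $q^*$ is injective.

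For \emph{ii)~$\Rightarrow$~iii)}: writing $d^\g\V=-0\wedge\V$ and applying the uniqueness clause of Lemma~\ref{lem:lema0} with $\theta_0'=0$ gives $0=(\theta_0')_{|\h}=(\theta_0)_{|\h}=\chi_\g{}_{|\h}-\chi_\h$. For \emph{iii)~$\Rightarrow$~i)}: choose any nonzero $\V\in\wedge^{m-n}\h^0$; by Lemma~\ref{lem:lema0}, $d^\g\V=-\theta_0\wedge\V$ with $(\theta_0)_{|\h}=\chi_\g{}_{|\h}-\chi_\h=0$, and since $\theta_0'=0$ also restricts to $0$ on $\h$, the converse clause of Lemma~\ref{lem:lema0} yields $d^\g\V=0$ (this is ii)). Now $\sigma\equiv 0$ and this $\V$ satisfy hypotheses i) and ii) of Theorem~\ref{thm:volume-form} — indeed $d\sigma(e)_{|\h}=0=\chi_\g{}_{|\h}-\chi_\h$ and $d^\g\V=0=-d\sigma(e)\wedge\V$ — so there is a unique volume form $\nu$ with $q^*\nu=\lvec{\V}$; as $\sigma\equiv 0$ is multiplicative, Theorem~\ref{2.5} shows $\nu$ is semi-invariant with multiplier $c=e^\sigma\equiv 1$, i.e. $G$-invariant.

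The proof has no genuinely hard step; the only care needed is bookkeeping inside Lemma~\ref{lem:lema0}, invoking both its uniqueness clause (for ii)~$\Rightarrow$~iii)) and its converse clause (for iii)~$\Rightarrow$~i)), together with the one-dimensionality of $\wedge^{m-n}\h^0$ to pass freely between ``some $\V$'' and ``the $\V$''.
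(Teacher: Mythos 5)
Your proof is correct and follows essentially the same route as the paper's: both reduce everything to Theorem \ref{thm:volume-form} and Lemma \ref{lem:lema0}, with the key point that $G$-invariance of $q^*\nu=e^\sigma\lvec{\V}$ forces $\sigma\equiv 0$ (you obtain this via Theorem \ref{2.5} with $c\equiv 1$, the paper by redoing the left-translation computation directly, which is the same calculation). Your explicit remark that $\wedge^{m-n}\h^0$ is one-dimensional, and your use of the uniqueness/converse clauses of Lemma \ref{lem:lema0} with $\theta_0'=0$, are just slightly more spelled-out versions of the paper's observation that $\theta_0\wedge\V=0$ if and only if $\theta_0\in\h^0$.
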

\begin{proof}
Using Theorem \ref{thm:volume-form}, we deduce that the homogeneous space $M$ admits a volume form $\nu$ if and only if there exist $\sigma\in C^\infty(G)$ and $\V\in \wedge^{m-n}\h^0$ which satisfy \textit{i)} and \textit{ii)} in Theorem \ref{thm:volume-form}. In such a case, $q^*\nu =e^\sigma \lvec{\mathcal{V} _0}$.

Now, if $\phi\colon G\times G/H\to G/H$ is the transitive action of $G$ on $G/H$ and $L\colon G\times G\to G$ is the action of $G$ on itself by left-translations, we have that 
\[ q^*(\phi ^*_g\nu) = L^*_g (q^*\nu), \qquad \forall g\in G.\]
So, using that $q^*$ is an injective morphism, it follows that $\nu$ is $G$-invariant if and only if $q^*\nu$ is $G$-invariant.

To prove that \textit{i)} is equivalent to \textit{ii)}, we must show
that $q^*\nu=e^\sigma \lvec{\V}$ is $G$-invariant if and only if $\sigma$ identically vanishes. But, if $g\in G$ then 
\[
L^*_g (e^\sigma \lvec{\V})=e^{(\sigma\circ L_g)}\lvec{\V}.
\]
Therefore, $e^\sigma \lvec{\V}$ is $G$-invariant if and only if $\sigma\circ L_g=\sigma$, for any $g\in G$. Since $\sigma(e)=0$, we conclude that $e^\sigma\lvec{\V}$ is $G$-invariant if and only $\sigma$ identically vanishes.

Finally, let us prove that \textit{ii)} is equivalent to \textit{iii)}. The result directly follows from Lemma \ref{lem:lema0}, since $\theta_0\wedge \V=0$ if and only if $\theta_0\in \h^0$. 
\end{proof}

\begin{example}
    Let $G$ be any connected Lie group and $H=\{e\}$. 
    Since $\h$ is unimodular then, by Remark \ref{rem:hunim} ii), $G/H \simeq G$ always admits semi-invariant volume forms. Indeed, by Corollary \ref{cor:formas-invariantes}, $G/H$ admits invariant volume forms. Note that a volume form $\nu$ on $G/H\cong G$ is invariant with respect to the homogeneous action if and only if $\nu$ is left-invariant on $G$.
    \label{ex:He1}
\end{example}

\begin{example}[{\bf 2-dimensional homogeneous space admitting invariant and semi-invariant volume forms}]
    Consider the quotient $G/H$, where $G$ is the unique connected and simply connected Lie group with Lie algebra $\g$ given by 
    \begin{equation}
    [X_1,X_2]_{\g}=0, \quad [X_1,X_3]_{\g} =X_2, \quad [X_2,X_3]_{\g} =-X_2,
    \end{equation}
    and $\h = \mathrm{Lie} \, H = \langle X_1 \rangle$. Since $\h$ is $1$-dimensional, then it is unimodular ($\chi_\h = 0$) and Remark \ref{rem:hunim} ii) ensures that it admits semi-invariant volume forms. Moreover, we have that $\chi_\g = X^3$, and thus $\chi_\g{}_{|\h} = \chi_\h = 0$. Then, Corollary \ref{cor:invariantvf} ensures that $G/H$ admits invariant volume forms characterized by $q^* \nu = \lambda \lvec{X^2} \wedge \lvec{X^3}$, for any $\lambda \in \mathbb R^+$. Note that in this example $\h$ is not an ideal of $\g$ and, therefore, $G/H$ does not inherit a natural Lie group structure.
    \label{ex:both1}
\end{example}

\begin{example}[{\bf 3-dimensional homogeneous space admitting semi-invariant but not invariant volume forms}]
    Consider the quotient $G/H$, where $G$ is the unique connected and simply connected Lie group of dimension $4$ with Lie algebra $\g$ given by    
    \begin{equation*}
    [X_3,X_4]_{\g} =-X_3 ,
    \end{equation*}
    and the rest of the brackets are zero,
    $\h = \mathrm{Lie} \, H = \langle X_4 \rangle$. Note that $\g \simeq \mathbb R^2 \times r_2$, where $r_2$ is the solvable (and not unimodular) Lie algebra of dimension 2. The Lie algebra $\h$ is  not again an ideal of $\g$ and therefore, $G/H$ does not inherits a natural Lie group structure. Again, since $\h$ is unimodular, we have that $\chi_\h = 0$ and by Remark \ref{rem:hunim} ii) the homogeneous space $G/H$ admits semi-invariant volume forms. However, in this case $\chi_\g = X^4$ and thus $\chi_\g{}_{|\h} = X^4_{|\h} \neq \chi_\h$. By Corollary \ref{cor:invariantvf}, $G/H$ does not admit invariant volume forms. Using Proposition \ref{cor:caract:semi-invariante}, we can give an example of a semi-invariant volume form as follows: take $\theta_0 = X^4$, which is a 1-cocycle extension of $\chi_\g{}_{|\h} - \chi_\h$ and $\V = \lambda X^1 \wedge X^2 \wedge X^3 \in \bigwedge^3 \h^0$. Then $\mathrm d^{\g} \V = - \theta_0 \wedge \V = \lambda X^1 \wedge X^2 \wedge X^3 \wedge X^4 \neq 0$. The semi-invariant volume form is given by $q^*\nu=e^\sigma\lvec{\V}$, where $\sigma\colon G\to \R$ is the unique multiplicative function on $G$ satisfying $\lvec{X^4}=d\sigma$. 
    \label{ex:onlysemi1}    
\end{example}

\section{Poisson-Lie groups and Poisson homogeneous spaces}\label{sec:PLgroup}
%
%
%
%
%
%
%
%
%
%

A Poisson structure $\Pi_G$ on a Lie group $G$ is {\it multiplicative} if and only if the multiplication $G\times G\to G$ on $G$ is a Poisson map, when on the product manifold $G\times G$ we consider the standard product Poisson structure. In these conditions, $\Pi _G$ is a {\it Poisson-Lie structure} on $G$. More generally, a $k$-vector field $P\in \mathfrak{X}^k(G)$ is said to be multiplicative if the equation
\begin{equation}\label{eq:multiplicativo}
P(gh)=(L_g)_* P(h)+(R_h)_* P(g),
\end{equation}
holds for any $g,h\in G$. Since $G$ is connected, a $k$-vector field $P$ is multiplicative if and only if $P(e)=0$ and ${\mathcal L}_{\lvec{X}}P$ is left-invariant, for every $X\in \g.$ Moreover, if 
$P$ and $Q$ are multiplicative multivector fields on a Lie group $G$, so is their Schouten bracket $[P,Q]$ (see \cite{Lu,ILX}).

Given a multiplicative $k$-vector field, we have that $P(e)=0$, $e\in G$ being the identity element. This fact allows to define the map $\delta _P\colon \g \to \wedge ^k \g$
\[
\delta _P (X)=[\lvec{X}, P](e), \qquad \forall X\in \g.
\]
If $P$ is multiplicative then $\delta_P$ is a 1-cocycle with respect to the adjoint representation $ad^{\wedge^k\g}:\g\times \wedge^k\g\to \wedge^k\g$ of $\g$, that is,
\[
\delta_P  [X, Y]=ad^{\wedge^k\g}(X)(\delta_PY)- ad^{\wedge^k\g}(Y)(\delta_PX) \qquad \forall X,Y \in \g
\]
(see, for instance, \cite{Lu}). Note that 
$$[\lvec{X}, P]= \lvec{\delta _P (X)}.$$
\begin{example}\label{ex:cociclos:infinitesimales}

1. Let $(G, \Pi _G)$ be a Poisson-Lie group and   $X^{\Pi _G}_{f}=i_{df}\Pi_G$ be the Hamiltonian vector field of  a function $f:G\to \R$.
If $f$ is multiplicative, then, $X^{\Pi _G}_{f}$ is a multiplicative vector field. In fact, since $\Pi_G(e)=0,$ we have that $X_f^{\Pi_G}(e)=0$ and if $X\in\g$ then 
$${\mathcal L}_{\lvec{X}}X_f^{\Pi_G}=i(d(\lvec{X}(f)))\Pi_G + i(df){\mathcal L}_{\lvec{X}}\Pi_G.$$ 

Now, using that $df$ is left-invariant, we deduce that
$${\mathcal L}_{\lvec{X}}X_f^{\Pi_G}=i(df)(\lvec{\delta_{\Pi_G}X})=i(df(e))(\delta_{\Pi_G}X).$$
In particular, 
$$\delta_{X_f^{\Pi_G}}X=i(\theta)\delta_{\pi_G}X,$$
where $\theta=df(e).$

2. If $r\in \wedge ^k\g$ and $P=\lvec{r}-\rvec{r}$ then $P$ is multiplicative and
$\delta _P(X)=[X,r]$. In the particular case, when $k=2$ and $r$ induces a Poisson structure $\Pi_G$, $(G, \Pi_G)$ is called a coboundary Poisson-Lie group (see \cite{ILX,Lu}).
\end{example}


Given a Poisson-Lie group $(G,\Pi _G)$, the Jacobi identity for the Poisson structure $\Pi _G$ implies that the dual morphism of $\delta _{\Pi_G}$, 
$$\delta _{\Pi _G}^*=[\cdot,\cdot]_{\mathfrak g^*}:\wedge^2{\mathfrak g}^*\to {\mathfrak g}^*$$
is a Lie bracket on ${\mathfrak g}^*$.  In other words, the pair $(({\mathfrak g},[\cdot,\cdot]_\g),({\mathfrak g}^*,[\cdot,\cdot]_ {\mathfrak g^*}))$ is {\it a Lie bialgebra.} Conversely, if $(({\mathfrak g},[\cdot,\cdot]_\g,$ $({\mathfrak g}^*,[\cdot,\cdot]_ {\mathfrak g^*}))$ is a Lie bialgebra and $G$ is the corresponding connected and simply-connected Lie group with Lie algebra $({\mathfrak g},[\cdot,\cdot]_\g),$ then there exists a unique Poisson-Lie structure $\Pi_G$ on $G$ such that the Lie bialgebra associated with
$\Pi_G$ is just 
\[
(({\mathfrak g},[\cdot,\cdot]_\g),({\mathfrak g}^*,[\cdot,\cdot]_ {\mathfrak g^*})).
\]

In addition, there is a Lie algebra structure $[\cdot,\cdot]_{\g\oplus \g^*}$ on $\g\oplus\g^*$ given by 
\begin{equation}\label{eq:Lie:algebra:double}
[X_1+\xi_1,X_2+\xi_2]_{\g\oplus\g^*}=[X_1,X_2]_\g + (ad^{\g^*})^*_{\xi_1} X_2-(ad^{\g^*})^*_{\xi_2} X_1 + [\xi_1,\xi_2]_{\g^*} + (ad^{\g})^*_{X_1}\xi_2-(ad^{\g})^*_{X_2}\xi_1, 
\end{equation}
for $X_i\in \g$ and $\xi_i\in \g^*.$ Here $(ad^{\g^*})^*:\g^*\times \g\to \g$ and $(ad^{\g})^*:\g\times \g^*\to \g^*$ are the coadjoint actions for $(\g,[\cdot,\cdot]_\g)$ and $(\g^*,[\cdot,\cdot]_{\g^*}),$ respectively, i.e. 
\begin{equation}\label{eq:adast}
(ad^{\g^*})^*_\xi X(\xi')=\langle[\xi',\xi]_{\g^*},X\rangle \mbox{ and } (ad^{\g})^*_X \xi(X')=-\langle\xi,[X,X']_\g\rangle
\end{equation}
(for more details, see \cite{Me,Lu, Vaisman1994poissonbook}).
%

\begin{definition} Let $(G,\Pi_G)$ be a Poisson-Lie group and $H$ a closed subgroup of $G$. A $(G, \Pi_G)$-homogeneous  Poisson structure  $\Pi$ is a Poisson bi-vector field on the quotient space $G/H$ such that the action 
\begin{equation}\label{sigma}\phi:(G,\Pi_G)\times (G/H,\Pi)\to (G/H,\Pi)
\;\;\;\; \phi (g',gH)=g'gH, \qquad \forall g,g'\in G,
\end{equation} 
is Poisson. 
\end{definition}
We have the following characterization of a  homogeneous Poisson structure on $G/H$ (see \cite{Dr}).
\begin{proposition}\label{carac-PH}
Let $(G,\Pi_G)$ be a Poisson-Lie group and $H$ a closed subgroup of $G$. A bi-vector field $\Pi$ on $G/H$ is a $(G, \Pi_G)$-homogeneous Poisson structure if and only if these two conditions are satisfied 
\begin{itemize}
\item[i)] $\Pi(gH)=(\phi_g)_* (\Pi (eH))+q_*\Pi_G(g), \quad \forall g\in G$,
\item[ii)]
$\l_{\Pi(eH)}$ is a Lie subalgebra of $(\g\oplus \g^*, [\cdot,\cdot])$, where $\l_{\Pi(eH)}$ is  the Drinfeld Lagrangian subalgebra associated to $(G/H,\Pi )$, i.e. the subspace of $\g\oplus \g^*$ 
\begin{equation}\label{eq:drinfeld:subalgebra}
\l_{\Pi(eH)}=\{X+\xi \, | \,X\in \g, \, \xi\in \h^0, \, i_\xi\Pi(eH)=x+\h\},
\end{equation}
$\h^0$ being the annihilator of the Lie algebra $\h$ of $H$.  
\end{itemize} 
\end{proposition}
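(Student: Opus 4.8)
The plan is to disentangle the two demands hidden in the statement — that the action $\phi$ be a Poisson map and that $\Pi$ be a bona fide Poisson tensor — and treat them separately. First I would show that, for an \emph{arbitrary} bi-vector field $\Pi$ on $G/H$, the action $\phi$ is Poisson if and only if i) holds. Writing $\phi^x\colon G\to G/H$, $\phi^x(g)=\phi(g,x)$, the differential of $\phi$ at $(g,x)$ splits, along $T_{(g,x)}(G\times G/H)=T_gG\oplus T_x(G/H)$, as $(v,w)\mapsto(\phi^x)_*v+(\phi_g)_*w$; since the product Poisson tensor $\Pi_G\oplus\Pi$ on $G\times G/H$ has no component mixing the two factors, its $\phi$-relatedness with $\Pi$ amounts to
\[
\Pi(\phi(g,x))=(\phi^x)_*\Pi_G(g)+(\phi_g)_*\Pi(x),\qquad\forall\,g\in G,\ x\in G/H.
\]
Putting $x=eH$ (so $\phi^{eH}=q$) gives exactly i). Conversely, assuming i) and writing $x=aH$, applying i) at the point $ga$ and invoking the multiplicativity \eqref{eq:multiplicativo} of $\Pi_G$ together with $q\circ L_g=\phi_g\circ q$, $q\circ R_a=\phi^{aH}$ and $\phi_{ga}=\phi_g\circ\phi_a$ recovers the displayed identity for every $x$. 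This equivalence does not use $[\Pi,\Pi]=0$.

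Assuming i), I would next reduce the integrability of $\Pi$ to the base point $eH$. Since $\Pi_G$ is a Poisson-Lie structure, $[\Pi_G,\Pi_G]=0$, and on the product $[\Pi_G\oplus\Pi,\Pi_G\oplus\Pi]=[\Pi_G,\Pi_G]\oplus[\Pi,\Pi]=0\oplus[\Pi,\Pi]$; because the Schouten bracket respects $\phi$-relatedness, $0\oplus[\Pi,\Pi]$ is $\phi$-related to $[\Pi,\Pi]$, and evaluating at $(g,eH)$ gives $(\phi_g)_*\bigl([\Pi,\Pi](eH)\bigr)=[\Pi,\Pi](gH)$; as $\phi$ is transitive, $[\Pi,\Pi]\equiv0$ if and only if $[\Pi,\Pi](eH)=0$. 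On the infinitesimal side, identify $T_{eH}(G/H)\cong\g/\h$ and $T^*_{eH}(G/H)\cong\h^0$: the subspace $\l_{\Pi(eH)}$ of \eqref{eq:drinfeld:subalgebra} has dimension $\dim\h^0+\dim\h=\dim\g$ and is isotropic for the canonical pairing on $\g\oplus\g^*$ --- indeed, for $X_i+\xi_i\in\l_{\Pi(eH)}$ one gets $\langle\xi_1,X_2\rangle+\langle\xi_2,X_1\rangle=\Pi(eH)(\xi_2,\xi_1)+\Pi(eH)(\xi_1,\xi_2)=0$ --- so it is automatically a Lagrangian subspace, and the only genuine content of ii) is closedness under the bracket \eqref{eq:Lie:algebra:double}.

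The crux is therefore the purely infinitesimal assertion that, under i), $[\Pi,\Pi](eH)=0$ if and only if $\l_{\Pi(eH)}$ is closed under \eqref{eq:Lie:algebra:double}. To establish it I would fix a basis of $\g$ adapted to $\h$, encode $\Pi(eH)$ by its components over $\h^0$, and compute $[\Pi,\Pi](eH)$ from i): the linearization at $e$ of $g\mapsto q_*\Pi_G(g)$ supplies the cobracket $\delta_{\Pi_G}$ (dual to $[\cdot,\cdot]_{\g^*}$), while the linearization at $e$ of $g\mapsto(\phi_g)_*\Pi(eH)$ supplies, through the infinitesimal $\g$-action, the adjoint representation of $\g$ and the coadjoint terms of \eqref{eq:adast}; collecting everything, the components of $[\Pi,\Pi](eH)$ turn out to be exactly the obstruction to $[\l_{\Pi(eH)},\l_{\Pi(eH)}]\subseteq\l_{\Pi(eH)}$. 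This matching --- Drinfeld's classical computation --- is the step I expect to be the main obstacle: it is a lengthy but wholly mechanical bookkeeping, in contrast with the formal arguments above. Granting it, the proposition follows: a $(G,\Pi_G)$-homogeneous Poisson structure satisfies i) by the first step, and, being Poisson, satisfies ii) by the above; conversely i) makes $\phi$ Poisson, and then ii) forces $[\Pi,\Pi](eH)=0$, hence $[\Pi,\Pi]\equiv0$, so $\Pi$ is Poisson and the action is Poisson.
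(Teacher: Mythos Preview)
The paper does not supply its own proof of this proposition: it is stated with the parenthetical reference ``(see \cite{Dr})'' and attributed to Drinfeld, so there is no in-paper argument to compare against. Your outline is the standard one and is essentially the argument Drinfeld gave (in \cite{Dr2}).

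Your first two reductions are carried out correctly. The equivalence between $\phi$ being a Poisson map and condition i) is argued cleanly; the use of multiplicativity of $\Pi_G$ together with $q\circ L_g=\phi_g\circ q$ and $q\circ R_a=\phi^{aH}$ to go from the special case $x=eH$ back to arbitrary $x$ is exactly right. The reduction of $[\Pi,\Pi]=0$ to the single condition $[\Pi,\Pi](eH)=0$ via $\phi$-relatedness of Schouten brackets and transitivity is also correct, as is the verification that $\l_{\Pi(eH)}$ is automatically Lagrangian.

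The only point to flag is that your third step --- the identification of $[\Pi,\Pi](eH)$ with the obstruction to closure of $\l_{\Pi(eH)}$ under \eqref{eq:Lie:algebra:double} --- is described rather than executed. You are honest about this (``lengthy but wholly mechanical''), and your description of where the pieces come from (the cobracket $\delta_{\Pi_G}$ from linearizing $q_*\Pi_G$, the adjoint/coadjoint terms from linearizing $(\phi_g)_*\Pi(eH)$) is accurate. But this computation \emph{is} the substance of Drinfeld's theorem, so in a self-contained write-up it cannot be left entirely implicit; at minimum one should display the expression for $\tfrac12[\Pi,\Pi](eH)(\xi_1,\xi_2,\xi_3)$ in terms of the structure constants of $\g$, $\g^*$ and the components of $\Pi(eH)$, and exhibit the matching with the bracket condition on $\l_{\Pi(eH)}$. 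Since the paper itself only cites the result, your sketch is already more than what appears there.
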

Note that, in the previous proposition, $\Pi (eH)\in \wedge ^2T_{eH}(G/H)\cong \wedge^2(\g/\h)$ and $\g/\h\cong (\h^0)^*$.

In this paper, we will deal with a natural family of Poisson homogeneous spaces.
\begin{definition}
Given a Poisson-Lie group $G$ and a closed Lie subgroup $H$, a Poisson homogeneous space $(G/H,\Pi )$ is called a coisotropic Poisson quotient if $H$ is a coisotropic submanifold of  $G$, that is, the subset of functions which vanish on $H$ is a subalgebra of the Poisson algebra of functions on $G$ induced by the Poisson structure $\Pi_G.$ 
\end{definition}
Examples of this type of Poisson homogeneous spaces may be found in Section \ref{sec:exPH}. For this kind of Poisson homogeneous spaces, we have the following characterization \cite{Lu}.
\begin{proposition}\label{cois}
Let $(G, \Pi_G)$ be a Poisson-Lie group and $H$ be a closed subgroup of $G$, such that $(G/H, \Pi)$ is a Poisson homogeneous space.  Then, the following are equivalent:
\begin{itemize}
\item[i)] $(G/H,\Pi )$ is a coisotropic Poisson quotient; 
\item[ii)] $q: (G, \Pi_G) \to (G/H, \Pi)$ is a Poisson epimorphism;
\item[iii)] $\Pi (eH)=0$;
\item[iv)] $\h^0=\{\xi\in \g^* \, | \, \xi_{|\h}=0\}$ is a Lie subalgebra of $(\g^*, [\cdot,\cdot]_{\g^*})$.
\end{itemize}
\end{proposition}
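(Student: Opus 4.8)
The plan is to prove the chain of equivalences by establishing a cycle: (iii) $\Rightarrow$ (ii) $\Rightarrow$ (i) $\Rightarrow$ (iii), together with the separate equivalence (iii) $\Leftrightarrow$ (iv), which will then hook into the cycle to close the argument. The key technical input is Proposition \ref{carac-PH}: the bi-vector field $\Pi$ on $G/H$ satisfies $\Pi(gH) = (\phi_g)_*(\Pi(eH)) + q_*\Pi_G(g)$, and $\ell_{\Pi(eH)}$ is the Drinfeld Lagrangian subalgebra. Throughout I would keep in mind that $\Pi(eH) \in \wedge^2(\g/\h) \cong \wedge^2(\h^0)^*$.

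First I would do (iii) $\Leftrightarrow$ (iv), which is essentially reading off the definition \eqref{eq:drinfeld:subalgebra} of the Drinfeld subalgebra. If $\Pi(eH) = 0$, then $\ell_{\Pi(eH)} = \{X + \xi \mid X \in \h, \ \xi \in \h^0\} = \h \oplus \h^0$; by condition (ii) of Proposition \ref{carac-PH} this is a Lie subalgebra of $(\g \oplus \g^*, [\cdot,\cdot])$, and chasing the bracket formula \eqref{eq:Lie:algebra:double} on two elements $\xi_1, \xi_2 \in \h^0 \subset \g^*$ shows its $\g^*$-component $[\xi_1,\xi_2]_{\g^*}$ must lie in $\h^0$ (the $\g$-component obstruction involves only $\h$ since $X_i = 0$), so $\h^0$ is a subalgebra of $\g^*$. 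Conversely, if $\h^0$ is a Lie subalgebra of $\g^*$, then $\h \oplus \h^0$ is a Lagrangian subspace of $\g \oplus \g^*$ of the right dimension which one checks is closed under \eqref{eq:Lie:algebra:double} (using that $\h$ is a subalgebra of $\g$, that $\h^0$ is a subalgebra of $\g^*$, and that the mixed coadjoint terms land correctly because of the annihilator relation), hence it arises as $\ell_{\Pi(eH)}$ for the homogeneous structure with $\Pi(eH) = 0$; by uniqueness of the homogeneous Poisson structure with a given Drinfeld subalgebra, $\Pi(eH) = 0$.

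Next, (iii) $\Rightarrow$ (ii): if $\Pi(eH) = 0$ then by Proposition \ref{carac-PH}(i), $\Pi(gH) = q_*\Pi_G(g)$ for all $g$, which is precisely the statement that $q_*\Pi_G = \Pi$, i.e., $q$ is a Poisson map; since $q$ is a surjective submersion it is a Poisson epimorphism. Then (ii) $\Rightarrow$ (i) is a general fact about coisotropic reduction: if $q\colon (G,\Pi_G) \to (G/H, \Pi)$ is a Poisson epimorphism, then $q^{-1}(\text{point}) = gH$ is a coisotropic submanifold of $G$ (the pullback $q^*C^\infty(G/H)$ is a Poisson subalgebra whose vanishing ideal cuts out the fibre, and the conormal bundle of a fibre of a Poisson map is isotropic — equivalently, the ideal of functions vanishing on $H$ is closed under $\{\cdot,\cdot\}_{\Pi_G}$ because such functions are, up to the ideal, pullbacks along $q$). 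Finally (i) $\Rightarrow$ (iii): if $H$ is coisotropic in $(G,\Pi_G)$, the conormal space $(T_eH)^0 = \h^0 \subset T_e^*G = \g^*$ is isotropic for $\Pi_G(e)$; but $\Pi_G(e) = 0$ since $\Pi_G$ is multiplicative, so isotropy is automatic there — instead I would argue at the level of the induced bracket on $G/H$: coisotropy of all fibres $gH$ forces the characteristic distribution $\Pi_G^\sharp(TgH)^0 \subset TgH$, which after projection says $\Pi(eH) = q_* \Pi_G^\sharp((\ker q_*)^0)$ must vanish at $eH$ because $\h^0 = (\ker q_{*,e})^0$ is isotropic for the reduced bracket. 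I would present this last implication by combining Proposition \ref{carac-PH}(i) with the coisotropy condition expressed as $\Pi_G^\sharp(\h^0) \subseteq \h + \ker(\text{stuff})$ at $e$, forcing the well-defined part $i_\xi \Pi(eH) \in \g/\h$ in \eqref{eq:drinfeld:subalgebra} to be zero for all $\xi \in \h^0$, hence $\Pi(eH) = 0$.

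The main obstacle I anticipate is the precise bookkeeping in (i) $\Leftrightarrow$ (iii) and (i) $\Rightarrow$ (ii): "$H$ coisotropic in $G$" is a statement about the Poisson algebra of functions on all of $G$, not just at $e$, so one cannot merely use $\Pi_G(e) = 0$; the argument must propagate coisotropy of the identity fibre $H$ to coisotropy of every fibre $gH$ using multiplicativity of $\Pi_G$ (equation \eqref{eq:multiplicativo}), and then translate this into the vanishing of $\Pi(eH)$ via the Drinfeld subalgebra description \eqref{eq:drinfeld:subalgebra}. I expect the cleanest route is: coisotropy of $H$ $\iff$ $\Pi_G^\sharp(\h^0) \subseteq TH$ at every point of $H$ $\iff$ (using $\Pi_G^\sharp$ and $\phi_g$-equivariance) the map $i_{\bullet}\Pi(eH)\colon \h^0 \to \g/\h$ is identically zero $\iff$ $\Pi(eH) = 0$. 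Once that bridge is built, everything else is the straightforward Lie-bialgebra bracket computation sketched above; I would cite \cite{Lu} for the parts that are standard coisotropic-reduction folklore and spell out only the bracket-closure computation for $\h \oplus \h^0$.
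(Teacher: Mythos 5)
First, a caveat on the comparison itself: the paper does not prove Proposition \ref{cois} at all --- it is quoted from \cite{Lu} --- so there is no in-paper argument to measure your plan against, and what follows is an assessment of the plan on its own terms. The easy links are fine: (iii)$\Rightarrow$(ii) via Proposition \ref{carac-PH}(i), and (iii)$\Rightarrow$(iv) by reading off the $\g^*$-component of \eqref{eq:Lie:algebra:double} on $\l_{\Pi(eH)}=\h\oplus\h^0$, are exactly the standard arguments. Also, your blanket principle that ``the conormal bundle of a fibre of a Poisson map is isotropic'' is false in general (the fibre over $n$ is coisotropic iff the target bivector vanishes at $n$); in the step (ii)$\Rightarrow$(i) it is rescued by $\Pi(eH)=q_{*,e}\Pi_G(e)=0$, which follows from multiplicativity of $\Pi_G$ and should be said explicitly.

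The genuine gaps are in the two implications that feed back into (iii). For (iv)$\Rightarrow$(iii) you invoke ``uniqueness of the homogeneous Poisson structure with a given Drinfeld subalgebra'', but that uniqueness runs the wrong way: knowing that $\h\oplus\h^0$ is closed under \eqref{eq:Lie:algebra:double} produces \emph{some} homogeneous structure whose value at $eH$ vanishes; it does not force the \emph{given} $\Pi$ to satisfy $\l_{\Pi(eH)}=\h\oplus\h^0$, since several Lagrangian subalgebras $\l$ can have $\l\cap\g=\h$. For (i)$\Rightarrow$(iii), coisotropy of $H=q^{-1}(eH)$ gives $\Pi_G^\sharp(h)\bigl(q_h^*(\h^0)\bigr)\subseteq T_hH$, i.e.\ $q_*\Pi_G(h)=0$ for $h\in H$; substituting $g=h$ into Proposition \ref{carac-PH}(i) then yields only $\Pi(eH)=(\phi_h)_*\Pi(eH)$, i.e.\ the $H$-invariance of $\Pi(eH)\in\wedge^2(\g/\h)$, not its vanishing. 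These gaps are not cosmetic: take $G=\R^{2n}$ with $\Pi_G=0$, $H=\{e\}$, and $\Pi$ a constant symplectic bivector; this is a Poisson homogeneous space satisfying (i) and (iv) but neither (ii) nor (iii), so the cycle you propose cannot close as written. What your tools do prove is (ii)$\Leftrightarrow$(iii), (iii)$\Rightarrow$(i), (iii)$\Rightarrow$(iv), and (i)$\Leftrightarrow$(iv) (the latter by the infinitesimal criterion $\delta_{\Pi_G}(X)\in\h\wedge\g$ for $X\in\h$, dual to $[\h^0,\h^0]_{\g^*}\subseteq\h^0$, together with multiplicativity and connectedness of $H$). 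The remaining implication (iv)$\Rightarrow$(iii) needs the extra input $\l_{\Pi(eH)}=\h\oplus\h^0$ (cf.\ Remark \ref{rmk:3.6}), which many authors --- implicitly, this paper included --- build into the very definition of a coisotropic Poisson quotient; you should either adopt that convention or flag the implication as requiring it.
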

\begin{remark}\label{rmk:3.6}
a) If $G/H$ is a coisotropic Poisson quotient then 
$\l _{\Pi (eH)} =\{X+\xi \, | \, X\in \h, \xi\in \h^0\}\cong \h\oplus \h^0$.

b) Given a Poisson homogeneous space $(G/H,\Pi)$, if we further assume that $\h^0$ is an ideal in $\g^*,$ then $H$ is a Poisson submanifold of $G$. In such a case, $H$ is called a Poisson-Lie subgroup of $(G,\Pi _G)$.
\end{remark}

\section{Unimodularity and multiplicative unimodularity in coisotropic Poisson homogeneous spaces}\label{sec:unimodularity}

In this section, we will study the existence of semi-invariant volume forms on coisotropic Poisson homogeneous spaces which are preserved by all the Hamiltonian vector fields.

Given an orientable $m$-dimensional Poisson manifold $(M,\pi)$, we denote by $Ham _\pi (M)$ the set of Hamiltonian vector fields. If we consider $\nu \in \Omega^m (M)$ a volume form on it, the \textit{modular vector field} associated to $\nu$ is defined as the unique vector field $\M_\nu^\pi \in \mathfrak X (M)$ such that 
\begin{equation}\label{14'}
\M_\nu^\pi (h) = \mathrm{div}_\nu (X^\pi_h)
\end{equation}
for $X^\pi_h = i_{dh}\pi\in Ham_\pi (M)$, $\mathrm{div}_\nu (X^\pi_h)$ being the divergence of  $X^\pi_h$ with respect to $\nu$, \textit{i.e.}
\begin{equation}\label{eq:modular-class}
 \mathrm{div}_\nu (X^\pi_h) \nu = \mathcal L_{X^\pi_h} \nu.
\end{equation}
Note that if we consider other volume form $\nu',$ then there is a function $\sigma:M\to \R$ such that $\nu'=e^\sigma\nu$ and 
\begin{equation}\label{tilde-nu}
\M_{\nu'}^\pi=\M_{\nu}^\pi -\pi^\sharp (d\sigma),
\end{equation}
where $\pi^\#:T^*M\to TM$ is the vector bundle morphism induced by the Poisson $2$-vector $\pi$ (so, $\pi^\#(d\sigma)=X_\sigma^{\pi}$).

In \cite{ELW} (see also \cite{gutierrez2023unimodularity}), it was obtained that the expression of the modular vector field of a Poisson-Lie group $(G,\Pi _G)$ associated with a left invariant volume form $\nu^l$ on $G$ ($\nu\in\wedge^m\g^*,\nu\not=0$ and $m=\dim G)$ is 

\begin{equation}\label{eq:clase:modular:G}
\M_{\nu^l}^{\Pi _G}= \frac12 \left( \lvec{\chi _{\g^*}} + \rvec{\chi_{\g^*}} + \Pi_G^\sharp (\rvec{\chi_\g}) \right).    
\end{equation}
Next, we will describe the modular class of a coisotropic Poisson quotient $(G/H,\Pi)$.
\begin{proposition}\label{prop:modular-class}
Let $(G/H,\Pi)$ be a coisotropic Poisson quotient of the Poisson-Lie group $(G,\Pi_G)$ and $\nu$ be a 
volume form on $G/H$ such that $q^*\nu=e^\sigma\lvec{\V}$, with $\sigma:G\to \R$ a function on $G$, ${\V}\in \wedge^{m-n}\h^0$ and $\V\not=0$ as in Theorem \ref{thm:volume-form}. Then, there is an element $x_{\h^0}$ of $\g$  which projects on $\chi _{\h^0}\in (\h^0)^*\cong \g/\h,$ such that 
  the modular vector field  $\M_\nu^\Pi\in\X(G/H)$ of $(G/H,\Pi)$ is given by 
\begin{equation}\label{eq:proyeccion}
\M_\nu^\Pi=q_*\left(-\Pi _G^\sharp (d\sigma ) +  \frac12 \left(  \rvec{\chi_{\g^*}} - \lvec{\chi _{\g^*}}+2 \lvec{x_{\h ^0}} + \Pi_G^\sharp 
 (\rvec{\chi_\g})\right)\right).
 \end{equation}
Here $q:G\to G/H$ is the quotient projection. 
\end{proposition}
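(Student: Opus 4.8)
The plan is to transfer the whole computation to the Lie group $G$, exploiting that by Proposition \ref{cois} the projection $q\colon(G,\Pi_G)\to(G/H,\Pi)$ is a Poisson submersion. A first consequence is that, for every $h\in C^\infty(G/H)$, the Hamiltonian vector field $X_{h\circ q}^{\Pi_G}$ on $G$ is $q$-related to $X_h^\Pi$ on $G/H$, so that, by the definition \eqref{eq:modular-class} of the modular vector field,
\[
\mathcal{L}_{X_{h\circ q}^{\Pi_G}}(q^*\nu)=q^*\big(\mathcal{L}_{X_h^\Pi}\nu\big)=\big(\M_\nu^\Pi(h)\circ q\big)\,q^*\nu .
\]
Next I would fix an adapted basis: a basis $X_1,\dots,X_n$ of $\h$ completed to a basis $X_1,\dots,X_m$ of $\g$, with dual basis $\{X^1,\dots,X^m\}$ arranged so that $\h^0=\langle X^{n+1},\dots,X^m\rangle$ and $\V=X^{n+1}\wedge\dots\wedge X^m$. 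Putting $\mu_0:=X^1\wedge\dots\wedge X^m\in\wedge^m\g^*$ and $P:=X_1\wedge\dots\wedge X_n\in\wedge^n\h$ one has $i_P\mu_0=\V$, hence $\mu:=e^\sigma\lvec{\mu_0}$ is a volume form on $G$ with $q^*\nu=e^\sigma\lvec{\V}=i_{\lvec P}\mu$, where $\lvec P:=\lvec{X_1}\wedge\dots\wedge\lvec{X_n}$ is the left-invariant $n$-vector field.

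The heart of the argument is to differentiate $q^*\nu=i_{\lvec P}\mu$ along $X_{h\circ q}^{\Pi_G}$ and use the commutation rule $\mathcal{L}_X\,i_{\lvec P}-i_{\lvec P}\,\mathcal{L}_X=i_{[X,\lvec P]}$:
\[
\mathcal{L}_{X_{h\circ q}^{\Pi_G}}(q^*\nu)=i_{[X_{h\circ q}^{\Pi_G},\lvec P]}\mu+i_{\lvec P}\big(\mathcal{L}_{X_{h\circ q}^{\Pi_G}}\mu\big).
\]
For the last term, $\mathcal{L}_{X_{h\circ q}^{\Pi_G}}\mu=\mathrm{div}_\mu(X_{h\circ q}^{\Pi_G})\,\mu=\M_\mu^{\Pi_G}(h\circ q)\,\mu$, and combining the change-of-volume rule \eqref{tilde-nu} (applied to $\mu=e^\sigma\lvec{\mu_0}$) with the known formula \eqref{eq:clase:modular:G} for a left-invariant volume form yields
\[
\M_\mu^{\Pi_G}=\frac12\big(\lvec{\chi_{\g^*}}+\rvec{\chi_{\g^*}}+\Pi_G^\sharp(\rvec{\chi_\g})\big)-\Pi_G^\sharp(d\sigma),
\]
so that $i_{\lvec P}\big(\mathcal{L}_{X_{h\circ q}^{\Pi_G}}\mu\big)=\M_\mu^{\Pi_G}(h\circ q)\,q^*\nu$.

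It remains to deal with the ``vertical'' bracket $[X_{h\circ q}^{\Pi_G},\lvec P]$, which I would expand slot by slot. Since $\lvec{X_k}(h\circ q)=0$ for $X_k\in\h$ (the $\lvec{X_k}$ being tangent to the fibres of $q$) and $\Pi_G$ is multiplicative (so $\mathcal{L}_{\lvec{X_k}}\Pi_G=\lvec{\delta_{\Pi_G}X_k}$, see Example \ref{ex:cociclos:infinitesimales}), one gets $[X_{h\circ q}^{\Pi_G},\lvec{X_k}]=-i_{d(h\circ q)}\lvec{\delta_{\Pi_G}X_k}$. Here the coisotropy hypothesis enters in an essential way: by Proposition \ref{cois} it is equivalent to $\h^0$ being a Lie subalgebra of $\g^*$, which forces $\delta_{\Pi_G}(\h)\subseteq\h\wedge\g$; writing $\delta_{\Pi_G}X_k=\sum_{j\le n}X_j\wedge W^k_j$ with $W^k_j\in\g$ and using again $\lvec{X_j}(h\circ q)=0$, the vector field $[X_{h\circ q}^{\Pi_G},\lvec{X_k}]$ becomes a combination of the $\lvec{X_j}$'s, and wedging into $\lvec{X_1}\wedge\dots\wedge\lvec{X_n}$ collapses everything to $[X_{h\circ q}^{\Pi_G},\lvec P]=\lvec w(h\circ q)\,\lvec P$, with $w:=\sum_{k\le n}W^k_k\in\g$ (well defined modulo $\h$). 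Substituting back gives $\M_\nu^\Pi(h)\circ q=(\M_\mu^{\Pi_G}+\lvec w)(h\circ q)$ for all $h$. The last ingredient is the Lie-algebraic identity $w\equiv\chi_{\h^0}-\chi_{\g^*}\pmod{\h}$, which I would prove by a direct structure-constant computation: if $f^{\alpha\beta}_\gamma$ are the structure constants of $\g^*$ in the chosen basis, coisotropy reads $f^{ab}_i=0$ for $a,b>n\ge i$, and one checks $\langle X^b,W^k_k\rangle=f^{kb}_k$, $\langle X^b,\chi_{\g^*}\rangle=\sum_\alpha f^{b\alpha}_\alpha$ and $\chi_{\h^0}(X^b)=\sum_{a>n}f^{ba}_a$ for $b>n$, whence the claim follows from antisymmetry of $f^{\alpha\beta}_\gamma$ in $\alpha,\beta$ and cancellation of the $\alpha>n$ terms. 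Taking $x_{\h^0}:=\chi_{\g^*}+w$ (which then projects onto $\chi_{\h^0}\in(\h^0)^*\cong\g/\h$), and using $\frac12\lvec{\chi_{\g^*}}+\lvec w=\lvec{x_{\h^0}}-\frac12\lvec{\chi_{\g^*}}$, one obtains exactly the right-hand side of \eqref{eq:proyeccion} applied to $h\circ q$; since this holds for every $h$ and the differentials $dh$ span the cotangent spaces of $G/H$, the equality upgrades to \eqref{eq:proyeccion} (the remaining freedom in $x_{\h^0}$ lies in $\h$ and is annihilated by $q_*$, in agreement with the statement).

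The step I expect to be the main obstacle is the computation of $[X_{h\circ q}^{\Pi_G},\lvec P]$ together with the identification of $w$: one has to keep careful track of the signs in the Schouten bracket, notice that the decomposition $\delta_{\Pi_G}X_k=\sum_j X_j\wedge W^k_j$ is unique only modulo $\wedge^2\h$ (which is precisely why $w$ is well defined only modulo $\h$, matching the indeterminacy of $x_{\h^0}$), and invoke the coisotropy hypothesis exactly where it yields $\delta_{\Pi_G}(\h)\subseteq\h\wedge\g$. Everything else reduces to routine manipulations with interior products and Lie derivatives, together with the already-established expressions for the modular vector field of a Poisson--Lie group and for a left-invariant volume form.
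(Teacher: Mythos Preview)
Your proof is correct and follows essentially the same strategy as the paper's: lift the computation to $G$ via an auxiliary volume form, use the known expression \eqref{eq:clase:modular:G} for $\M^{\Pi_G}$ with respect to a left-invariant volume together with the change-of-volume rule \eqref{tilde-nu}, and then identify the ``vertical'' correction term as $\lvec{\chi_{\g^*}}-\lvec{x_{\h^0}}$ applied to $q^*dh$. The only cosmetic difference is the packaging: the paper wedges $q^*\nu$ with $\lvec{X^1}\wedge\dots\wedge\lvec{X^n}$ to obtain a volume on $G$ and then uses the Leibniz rule for $\mathcal{L}_{X_{h\circ q}^{\Pi_G}}$ on that wedge, whereas you write $q^*\nu=i_{\lvec P}\mu$ and use $[\mathcal{L}_X,i_{\lvec P}]=i_{[X,\lvec P]}$; the resulting trace computations are dual to one another and yield the same answer.
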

\begin{proof}
Let $\{Y^1,\ldots ,Y^{m-n}\}$ a basis of $\h^0$ such that 
$\V=Y^1\wedge \ldots \wedge Y^{m-n}.$ We complete it to a basis $\{ X^1,\ldots, X^n,Y^1,$ $\ldots ,Y^{m-n}\}$ of $\g^*.$ Denote by $\{ X_1,\ldots, X_n,Y_1,\ldots ,Y_{m-n}\}$ the corresponding dual basis.  In such a case, $\{ X_1,\ldots, X_n\}$ is a basis of $\h$. 

Then,  the 
form $\tilde{\nu}$ given by
\begin{equation*}
\tilde{\nu} = \lvec{X}^1\wedge \ldots \wedge\lvec{X}^n\wedge q^*\nu=e^\sigma \lvec{X}^1\wedge \ldots \wedge\lvec{X}^n\wedge\lvec{\V}
\end{equation*}
is a volume form on $G$.

If $h\in C^\infty (G/H)$, using (\ref{tilde-nu}) and \eqref{eq:clase:modular:G}, we have the relation
\begin{equation}\label{eq:ayuda1}
\M_{\tilde{\nu}} ^{\Pi _G} (h\circ q) {\tilde{\nu}} =\left ( 
-\Pi _G^\sharp (d\sigma ) +
\frac12 ( \lvec{\chi _{\g^*}} + \rvec{\chi_{\g^*}} + \Pi_G^\sharp (\rvec{\chi_\g}) )\right ) (h\circ q)\,{\tilde{\nu}}  .
\end{equation}
On the other hand,
\begin{eqnarray*}
\M_{\tilde{\nu}} ^{\Pi _G} (h\circ q) {\tilde{\nu}} &=& \mathcal L_{X^{\Pi_G}_{h\circ q}} \tilde\nu = \left (\mathcal L_{X^{\Pi_G}_{h\circ q}}  ( \lvec{X}^1\wedge \ldots \lvec{X}^n) \right )\wedge q^*\nu  +  \lvec{X}^1\wedge \ldots \lvec{X}^n \wedge \left (\mathcal L_{X^{\Pi_G}_{h\circ q}}  q^*\nu \right )\\
&=& \sum _i   \lvec{X}^1\wedge \ldots \wedge \left ( \mathcal L_{X^{\Pi_G}_{h\circ q}} \lvec{X}^i \right ) \wedge \ldots  \lvec{X}^n \wedge q^*\nu  +  \lvec{X}^1\wedge \ldots \lvec{X}^n \wedge \left ( (\M ^\Pi _\nu (h) \circ q ) q^*\nu \right ),
\end{eqnarray*}
where, in the last equality, we have used that, since  $G/H$ is a coisotropic Poisson quotient, then $q\colon (G,\Pi _G)\to (G/H,\Pi )$ is a Poisson map and $q_*(X_{h\circ q}^{\Pi_G})=X_h^\Pi$. Moreover, using that $q^* (dh)=\sum _j h_j \lvec{Y}^j$, with $h_j\in C^\infty (G)$,
\begin{eqnarray*}
\sum _i  (\mathcal L_{X^{\Pi_G}_{h\circ q}} \lvec{X}^i ) (\lvec{X}_i)&=&-\sum _i \lvec{X}^i ( [ X^{\Pi_G}_{h\circ q}, \lvec{X}_i ] )=  \sum _i ( \mathcal L_{\lvec{X}_i}\Pi _G ) (q^* dh, \lvec{X}^i) + \sum_i\Pi_G({\mathcal L}_{\lvec{X_i}}(q^*(dh)),\lvec{X^i}) \\ &=& \sum _i \lvec{\delta _{\Pi _G}(X_i) } (q^* dh, \lvec{X}^i) =\sum _{i,j}  h_j  \lvec{\delta _{\Pi _G}(X_i) } (  \lvec{Y}^j , \lvec{X}^i) \\ &=&\sum _{i,j}  h_j   [  Y^j ,  X^i]_{\g^*}(X_i) = \sum _j h_j ( \chi _{\g^*}(Y^j)-  x_{\h^0} (Y^j) ) \\
 &=&  ( \lvec{\chi _{\g^*}} -  \lvec{x_{\h^0}}) (q^*dh), 
\end{eqnarray*}
where $x_{\h^0}\in \g$ is an element which projects on $\chi _{\h ^0}$. Note that in the previous equality, we have used that ${\mathcal L}_{\lvec{X_i}}(q^*(dh))=0$ (this follows from $q^*(dh)$ is a $q$-basic $1$-form and  $\lvec{X_i}$ is a $q$-vertical vector field).

Therefore, 
\begin{equation}\label{eq:ayuda2}
\M_{\tilde\nu} ^{\Pi _G} (h\circ q) {\tilde{\nu}} 
= \left ( \lvec{\chi _{\g^*}} (q^*dh )-  \lvec{x_{\h^0}} (q^*dh)   
+ (\M ^\Pi _\nu (h) \circ q ) \right ) {\tilde{\nu}} .
\end{equation}

Combining equations \eqref{eq:ayuda1} and \eqref{eq:ayuda2}, we conclude that (\ref{eq:proyeccion}) holds. 
\end{proof}
\begin{remark}
In \cite[Lemma 4.13]{Lu08}, it is shown that the modular vector field on $G/H$ for a volume form $\nu$, with $q^*\nu =e^\sigma \lvec{\V}$, is given by \footnote{Note that $\chi_\g$ is $Ad_G$-invariant and thus $\rvec{\chi_\g} =\lvec{\chi_\g}$.}
\begin{equation}\label{eq:clase-modular-PHS-Lu}
\M_\nu^\Pi = q_* \left( -\Pi_G^\sharp (d\sigma) +\frac12 \left( \lvec{x_\l} + \rvec{\chi_{\g^*}} + \Pi_G^\sharp (\rvec{\chi_\g})\right) \right),    
\end{equation}
where 
$x_\l \in \g$ is an element of $\g$ that satisfies that
\begin{equation}\label{eq:x_l}
x_\l (\xi) = \chi_\l (\xi) 
\end{equation}
for all $\xi \in \h^0$.

Now, given a coisotropic Poisson homogeneous quotient $(G/H,\Pi )$, the element 
\[ x_\l = -\chi _{\g^*} + 2 x_{\h^0} \in \g\]
satisfies (\ref{eq:x_l}). Here, $\chi_{\h^0}\in \g$ is an element which projects on $\chi_{\h^0}.$ Indeed, suppose that $\{ X_i \}$ is a basis on $\h$ and complete it to a basis on $\{ X_i, Y_\alpha \}$ on $\g$. We denote the dual basis on $\g^*$ by $\{ X^i, Y^\alpha \}$. Therefore, $\l = \h \oplus \h^0 = \langle X_i \rangle \oplus \langle Y^\alpha \rangle$ and $\l^* = \h^* \oplus (\h^0)^* = \langle X^i \rangle \oplus \langle Y_\alpha \rangle$.

Then, for any $\xi \in \g^*$, we have that
\begin{equation}
 \chi_{\g^*} (\xi) = \Tr \mathrm{ad}^{\g^*}_\xi = (\mathrm{ad}^{\g^*}_\xi X^i) (X_i) + (\mathrm{ad}^{\g^*}_\xi Y^\alpha) (Y_\alpha). 
\end{equation}
Furthermore, for any $\xi \in \h^0$,  using \eqref{eq:Lie:algebra:double} and \eqref{eq:adast}, we deduce
\begin{equation*}
\begin{split}
\chi_\l (\xi) &= \Tr \mathrm{ad}_\xi^{\l} =  (\mathrm{ad}^\l_\xi X_i) (X^i) + (\mathrm{ad}^\l_\xi Y^\alpha) (Y_\alpha) = - (\mathrm{ad}^{\g^*}_\xi X^i) (X_i) + (\mathrm{ad}^{\g^*}_\xi Y^\alpha) (Y_\alpha) \\
&= - \chi_{\g^*} (\xi) + 2 (\mathrm{ad}^{\g^*}_\xi Y^\alpha) (Y_\alpha) = - \chi_{\g^*} (\xi) + 2 x_{\h^0} (\xi) .
\end{split}
\end{equation*}
Thus, \eqref{eq:clase-modular-PHS-Lu} reduces to 
\eqref{eq:proyeccion}.

\end{remark}


\begin{theorem}\label{th:uni-perp}
If $(G/H, \Pi)$ is a unimodular coisotropic Poisson quotient, that is $\M_\nu^\Pi = X_{\hat F}^\Pi$ with $\hat F \in \mathcal C^\infty (G/H)$, then $\h^0$ is a unimodular Lie algebra.
\label{th:h0unim}
\end{theorem}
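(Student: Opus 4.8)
The plan is to extract, from the hypothesis of unimodularity, a condition forcing the modular character $\chi_{\h^0}$ to vanish. By Proposition \ref{prop:modular-class}, the modular vector field of $(G/H,\Pi)$ with respect to a volume form $\nu$ with $q^*\nu=e^\sigma\lvec{\V}$ is
\[
\M_\nu^\Pi=q_*\left(-\Pi _G^\sharp (d\sigma ) +  \tfrac12 \left(  \rvec{\chi_{\g^*}} - \lvec{\chi _{\g^*}}+2 \lvec{x_{\h ^0}} + \Pi_G^\sharp
 (\rvec{\chi_\g})\right)\right),
\]
where $x_{\h^0}\in\g$ projects onto $\chi_{\h^0}\in(\h^0)^*\cong\g/\h$. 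The hypothesis says this equals $X_{\hat F}^\Pi=q_*(X_{\hat F\circ q}^{\Pi_G})=q_*(\Pi_G^\sharp(d(\hat F\circ q)))$ for some $\hat F\in C^\infty(G/H)$. First I would rewrite this as: the vector field
\[
Z:=-\Pi _G^\sharp (d\sigma ) +  \tfrac12 \left(  \rvec{\chi_{\g^*}} - \lvec{\chi _{\g^*}}+2 \lvec{x_{\h ^0}} + \Pi_G^\sharp
 (\rvec{\chi_\g})\right)-\Pi_G^\sharp(d(\hat F\circ q))
\]
on $G$ is $q$-vertical, i.e. tangent to the $H$-orbits, since $q_*Z=0$. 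Writing $F:=\hat F\circ q$ and absorbing the two Hamiltonian terms into $-\Pi_G^\sharp(d(\sigma+F))=-X_{\sigma+F}^{\Pi_G}$, this is a statement purely about vector fields on $G$.

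The key step is then to evaluate this verticality condition at the identity $e\in G$. Since $\Pi_G(e)=0$ (Poisson-Lie), the Hamiltonian term $\Pi_G^\sharp(d(\sigma+F))$ vanishes at $e$; likewise $\Pi_G^\sharp(\rvec{\chi_\g})$ vanishes at $e$. At $e$ the right-invariant and left-invariant vector fields $\rvec{\chi_{\g^*}}$ and $\lvec{\chi_{\g^*}}$ both take the value $\chi_{\g^*}\in\g$, so they cancel, and $\lvec{x_{\h^0}}(e)=x_{\h^0}$. Hence $Z(e)=x_{\h^0}\in\g$. But $Z$ being $q$-vertical means $Z(e)\in\ker(q_{*,e})=\h$. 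Therefore $x_{\h^0}\in\h$, which is exactly the statement that $\chi_{\h^0}=0$ in $(\h^0)^*\cong\g/\h$, i.e. $\h^0$ is unimodular. This is clean provided one is careful that "$Z$ is $q$-vertical'' genuinely holds pointwise everywhere, in particular at $e$, which follows directly from $q_*Z=0$ and the fact that $q$ is a submersion.

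The main obstacle I anticipate is bookkeeping rather than conceptual: one must make sure that $x_{\h^0}$ is only defined up to an element of $\h$ (it is merely a representative of $\chi_{\h^0}\in\g/\h$), so the conclusion $x_{\h^0}\in\h$ must be read correctly as "$\chi_{\h^0}=0$'', not as a statement about a particular lift. One should also double-check that $\hat F\circ q$ is indeed the Hamiltonian whose $\Pi_G$-Hamiltonian vector field $q$-projects to $X_{\hat F}^\Pi$ — this uses that $q$ is a Poisson map for a coisotropic quotient (Proposition \ref{cois}), so that $q_*(X_{\hat F\circ q}^{\Pi_G})=X_{\hat F}^{\Pi}$, which is exactly the compatibility already exploited in the proof of Proposition \ref{prop:modular-class}. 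With these points handled, evaluation at $e$ gives the result in one line.
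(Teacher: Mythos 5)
Your proposal is correct and follows essentially the same route as the paper: both arguments evaluate the identity in Proposition \ref{prop:modular-class} at the identity element, use $\Pi_G(e)=0$ (and the cancellation of $\rvec{\chi_{\g^*}}$ and $\lvec{\chi_{\g^*}}$ at $e$) to reduce the modular vector field there to $q_*(x_{\h^0})$, and compare with the vanishing of the Hamiltonian term at $e$ to conclude $x_{\h^0}\in\h$, hence $\chi_{\h^0}=0$. The only cosmetic difference is that the paper deduces the vanishing of $X_{\hat F}^\Pi(eH)$ directly from $\Pi(eH)=0$, while you lift everything to $G$ and invoke verticality; these are equivalent.
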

\begin{proof}
From equation \eqref{eq:proyeccion}, we have that $\M_\nu^\Pi (e H) = q_* (x_{\h^0})$, where we have taken into account that $\Pi_G (e) = 0$. 
Now, by hypothesis, $\M_\nu^\Pi = X^\Pi_{\hat F}$ for some $\hat F \in \mathcal C^\infty (G/H)$. Thus, since $X^\Pi_{\hat F} (eH) = 0$ (from the fact that $\Pi (eH) = 0$), it follows that $q_* (x_{\h^0}) = 0$ and thus $x_{\h^0} \in \h$. 
By definition, $x_{\h^0}$ is an element which projects onto $\chi_{\h^0} \in (\h^0)^*\cong \g/\h,$ 
from which we conclude that $\chi_{\h^0} = 0$.
\end{proof}
If $(G/H, \Pi)$ is unimodular, the results above imply that
\begin{equation}\label{eq:campo:simplificado}
\M_\nu^\Pi = q_* \left( -\Pi_G^\sharp (d\sigma) +\frac12 \left( \rvec{\chi_{\g^*}} -\lvec{\chi_{\g^*}}   + \Pi_G^\sharp (\rvec{\chi_\g})\right) \right).
\end{equation}    
Motivated by this expression, we give the following definition.
\begin{definition}
The vector field $\H ^{\Pi} _\nu$ of $G$ 
\begin{equation}\label{eq:clase-modular-PHS}
 \H ^{\Pi} _\nu =-\Pi_G^\sharp (d\sigma) + \frac12 \left(  \rvec{\chi_{\g^*}} -\lvec{\chi_{\g^*}}  + \Pi_G^\sharp (\rvec{\chi_\g})\right), 
\end{equation}
is called the horizontal modular vector field of $(G/H,\Pi )$ associated to the volume form $\nu.$ Here $\sigma:G\to \R$ is a function  such that $q^*\nu=e^\sigma\lvec\V$ and $\V\in \wedge^{m-n}\h^0,\V\not=0.$
\end{definition}
Note that in the previous definition $\sigma$ is determined  up to an additive constant. So, the vector field $\H ^{\Pi} _\nu$ is the same for all the functions $\sigma.$

Let us show some properties of the horizontal modular class.
\begin{proposition}\label{prop:propiedades:H}
Given a semi-invariant volume form $\nu$ on a coisotropic Poisson quotient $(G/H,\Pi)$, the horizontal modular vector field  $\H ^\Pi _\nu$ is a  $q$-projectable multiplicative and Poisson vector field.
\end{proposition}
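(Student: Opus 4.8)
The plan is to treat the three properties one at a time, using the decomposition
\[
\H^\Pi_\nu=-\Pi_G^\sharp(d\sigma)+\frac12\bigl(\rvec{\chi_{\g^*}}-\lvec{\chi_{\g^*}}\bigr)+\frac12\,\Pi_G^\sharp(\rvec{\chi_\g})
\]
together with the material of Section~\ref{sec:PLgroup}. For \emph{multiplicativity}: since $\nu$ is semi-invariant, $\sigma$ is multiplicative by Theorem~\ref{2.5}; and, by Remark~\ref{rem:hunim}~ii) and the $Ad_G$-invariance of $\chi_\g$, $\rvec{\chi_\g}=\lvec{\chi_\g}=d(\log(\det Ad))$ with $\log(\det Ad)\colon G\to\R$ multiplicative. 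Hence $-\Pi_G^\sharp(d\sigma)=-X^{\Pi_G}_{\sigma}$ and $\Pi_G^\sharp(\rvec{\chi_\g})=X^{\Pi_G}_{\log(\det Ad)}$ are multiplicative by the first part of Example~\ref{ex:cociclos:infinitesimales}, while $\frac12(\rvec{\chi_{\g^*}}-\lvec{\chi_{\g^*}})=-\frac12(\lvec{\chi_{\g^*}}-\rvec{\chi_{\g^*}})$ is multiplicative by the second part (with $r=\chi_{\g^*}\in\g$). As multiplicative vector fields form a linear subspace of $\X(G)$, $\H^\Pi_\nu$ is multiplicative; note this is the only place where semi-invariance of $\nu$ is used.

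For the \emph{Poisson} property I would regroup, using \eqref{eq:clase:modular:G}, as
\[
\H^\Pi_\nu=-\Pi_G^\sharp(d\sigma)+\M_{\nu^l}^{\Pi_G}-\lvec{\chi_{\g^*}}.
\]
A Hamiltonian vector field and a modular vector field are always Poisson vector fields, so it remains to show that $\lvec{\chi_{\g^*}}$ is Poisson. By the identity $[\lvec X,P]=\lvec{\delta_P(X)}$ applied to $P=\Pi_G$ one has $\mathcal L_{\lvec{\chi_{\g^*}}}\Pi_G=\lvec{\delta_{\Pi_G}(\chi_{\g^*})}$, and for all $\xi,\eta\in\g^*$,
\[
\langle\delta_{\Pi_G}(\chi_{\g^*}),\xi\wedge\eta\rangle=\langle\chi_{\g^*},[\xi,\eta]_{\g^*}\rangle=0
\]
because the modular character of $(\g^*,[\cdot,\cdot]_{\g^*})$ vanishes on its derived ideal. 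Thus $\delta_{\Pi_G}(\chi_{\g^*})=0$, so $\lvec{\chi_{\g^*}}$, and hence $\H^\Pi_\nu$, is a Poisson vector field.

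The step I expect to be the main obstacle is \emph{$q$-projectability}. The computation in the proof of Proposition~\ref{prop:modular-class} gives $(\H^\Pi_\nu+\lvec{x_{\h^0}})(q^*h)=q^*(\M^\Pi_\nu h)$ for all $h\in C^\infty(G/H)$ and any $x_{\h^0}\in\g$ representing $\chi_{\h^0}\in(\h^0)^*\cong\g/\h$; hence $\H^\Pi_\nu+\lvec{x_{\h^0}}$ is $q$-projectable, and it suffices to prove that $\lvec{x_{\h^0}}$ is $q$-projectable. Since a vector field on $G$ is $q$-projectable iff its bracket with every vector field tangent to the fibres of $q$ is again tangent to them (the fibres $gH$ being connected, as $H$ is), and $[\lvec Y,\lvec{x_{\h^0}}]=\lvec{[Y,x_{\h^0}]_\g}$, this amounts to showing $[Y,x_{\h^0}]_\g\in\h$ for every $Y\in\h$, i.e.\ that $\chi_{\h^0}$ is annihilated by the natural $\h$-action on $\g/\h$. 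Fix $X\in\h$: because $\h$ is a subalgebra of $\g$, the operator $(ad^\g)^*_X$ preserves $\h^0$ (if $\xi\in\h^0$, $Y\in\h$ then $(ad^\g)^*_X\xi(Y)=-\langle\xi,[X,Y]_\g\rangle=0$), and, by the Lie bialgebra compatibility (the dual of the cocycle property of $\delta_{\Pi_G}$), $(ad^\g)^*_X$ is a derivation of $(\g^*,[\cdot,\cdot]_{\g^*})$; therefore it restricts to a derivation $D_X$ of the Lie subalgebra $(\h^0,[\cdot,\cdot]_{\g^*})$. Since $ad_{D\xi}=[D,ad_\xi]$ for any derivation $D$ of a Lie algebra, cyclicity of the trace yields $\chi_{\h^0}(D_X\xi)=\Tr(ad^{\h^0}_{D_X\xi})=0$ for all $\xi\in\h^0$; as $\chi_{\h^0}(\zeta)=\zeta(x_{\h^0})$ for $\zeta\in\h^0$, this reads $\xi([X,x_{\h^0}]_\g)=0$ for all $\xi\in\h^0$, i.e.\ $[X,x_{\h^0}]_\g\in\h$. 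Hence $\lvec{x_{\h^0}}$ is $q$-projectable, and so is $\H^\Pi_\nu=(\H^\Pi_\nu+\lvec{x_{\h^0}})-\lvec{x_{\h^0}}$. The real content of the proposition lies in this trace/derivation argument on $\chi_{\h^0}$; multiplicativity and the Poisson property are essentially formal once Section~\ref{sec:PLgroup} is in place.
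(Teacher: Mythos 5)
Your arguments for multiplicativity and for the Poisson property are correct, and essentially follow the paper's route of splitting $\H^\Pi_\nu$ into the Hamiltonian piece $-X^{\Pi_G}_{\sigma}$ (multiplicative because $\sigma$ is, by Theorem \ref{2.5}) and the block $\frac12\bigl(\rvec{\chi_{\g^*}}-\lvec{\chi_{\g^*}}+\Pi_G^\sharp(\rvec{\chi_\g})\bigr)$; the paper simply cites \cite{ELW} for the latter being multiplicative and Poisson, whereas you rederive it. In particular your observation that $\delta_{\Pi_G}(\chi_{\g^*})=0$ because $\chi_{\g^*}$ annihilates $[\g^*,\g^*]_{\g^*}$, hence $\lvec{\chi_{\g^*}}$ is itself a Poisson vector field, is a valid self-contained substitute for that citation.

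The $q$-projectability part is where the proposal breaks down, and the step that fails cannot be repaired. Your reduction is right: by the proof of Proposition \ref{prop:modular-class} the field $\H^\Pi_\nu+\lvec{x_{\h^0}}$ is $q$-projectable, so everything hinges on $[\h,x_{\h^0}]_\g\subseteq\h$, i.e.\ on $\chi_{\h^0}$ being fixed by the isotropy action. But the claim that $(ad^\g)^*_X$ is a derivation of $(\g^*,[\cdot,\cdot]_{\g^*})$ "by the Lie bialgebra compatibility" is false: dualizing the cocycle identity for $\delta_{\Pi_G}$ produces, besides the two derivation terms, correction terms coming from the mixed part of the double bracket \eqref{eq:Lie:algebra:double}, and these do not vanish. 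The coisotropic sphere of Table \ref{table:so3PHS} is an explicit counterexample to both your intermediate claim and your conclusion: there $\h=\langle J_1\rangle$, $\h^0=\langle J^2,J^3\rangle$ with $[J^2,J^3]_{\g^*}=\eta J^2$, one computes $(ad^\g)^*_{J_1}J^2=J^3$ and $(ad^\g)^*_{J_1}J^3=-J^2$, so $(ad^\g)^*_{J_1}[J^2,J^3]_{\g^*}=\eta J^3$ while the derivation expression gives $[J^3,J^3]_{\g^*}+[J^2,-J^2]_{\g^*}=0$; moreover $\chi_{\h^0}(J^2)=0$, $\chi_{\h^0}(J^3)=-\eta$, so one may take $x_{\h^0}=-\eta J_3$ and then $[J_1,x_{\h^0}]_\g=\eta J_2\notin\h$. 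Equivalently, $[\lvec{J_1},\H^\Pi_\nu]=\eta\,\lvec{[J_1,J_3]_\g}=-\eta\,\lvec{J_2}$ is not vertical, even though the $SO(3)$-invariant volume form on $S^2$ is certainly semi-invariant: $\H^\Pi_\nu$ is genuinely not $q$-projectable in that example, so the statement you are trying to prove does not hold at this level of generality. The paper's own (terse) proof reads projectability off \eqref{eq:campo:simplificado}, i.e.\ it operates in the situation $\chi_{\h^0}=0$ (forced by unimodularity via Theorem \ref{th:uni-perp}, and in force in every later use of the proposition, e.g.\ Theorem \ref{thm:caracterizacion}); there $x_{\h^0}\in\h$, $\lvec{x_{\h^0}}$ is vertical and projects to zero, and your reduction closes immediately. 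If you want a correct general statement, add the hypothesis that $\h^0$ is unimodular, or more generally that $\chi_{\h^0}$ is invariant under the isotropy representation of $\h$ on $\g/\h\cong(\h^0)^*$.
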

\begin{proof}
From \eqref{eq:campo:simplificado},
it follows that $\H_\nu^\Pi$ is $q$-projectable. On the other hand, in \cite{ELW}, 
it also is shown that the vector field 
\begin{equation*}
\frac12 \left( \rvec{\chi_{\g^*}}-\lvec{\chi_{\g^*}}   + \Pi_G^\sharp (\rvec{\chi_{\g}})\right)
\end{equation*}
is multiplicative and Poisson. Thus, since $\sigma$ is multiplicative when $\nu$ is a semi-invariant form (see Theorem \ref{2.5}), we have that the vector field $\Pi_G^{\#}(d\sigma)$ is multiplicative (see Example \ref{ex:cociclos:infinitesimales}) and Poisson. This implies that
\[
\H_\nu ^\Pi =\frac12 \left( \rvec{\chi_{\g^*}}-\lvec{\chi_{\g^*}}   + \Pi_G^\sharp (\rvec{\chi_{\g}})\right)-\Pi_G^\sharp (d\sigma) 
\]
also is a multiplicative and Poisson vector field.    
\end{proof}

The previous proposition and the fact that the Schouten bracket preserves multiplicative multivector fields suggests to consider the sequence
\begin{equation}\label{eq:sucesion:exacta}
\xymatrix{C^\infty _{\mathrm{mult,bas}}(G) \ar[r]^{[\Pi_G, \cdot ]} &
\mathfrak{X}_{\mathrm{mult,proj}}(G) \ar[r]^{[\Pi_G, \cdot ]} & \mathfrak{X}^2_{\mathrm{mult,proj}}(G)\ar[r]&...\ar[r]&\mathfrak{X}^m_{\mathrm{mult,proj}}(G),}
\end{equation}
where $C^\infty _{\mathrm{mult,bas}}(G)$ is the set of $q$-basic functions on $G$ which are multiplicative and 
$\mathfrak{X}^k_{\mathrm{mult,proj}}(G)$ is the set of multiplicative and projectable $k$-vector fields on $G$. Note that the Schouten-Nijenhuis bracket of two projectable multivector fields is again projectable. So, the sequence \eqref{eq:sucesion:exacta} defines a subcomplex of the Poisson cohomology complex of $(G,\Pi_G)$ introduced by Lichnerowicz \cite{Li} and of the projectable Poisson cohomology associated with the Poisson submersion $q:(G,\Pi_G)\to (G/H,\Pi)$ defined in \cite{RC}. 

A 1-cocycle for this subcomplex is a multiplicative and projectable Poisson vector field. So, Proposition \ref{prop:propiedades:H} implies that $\H_\nu ^\Pi$ is 
a 1-cocycle in this cohomology.

This motivates to introduce the following definition.

\begin{definition}\label{definition:multiplicative-unimodular}
Let $(G,\Pi _G)$ be a Poisson-Lie group and $H$ a closed Lie subgroup such that $(G/H, \Pi )$ is a coisotropic Poisson quotient. $(G/H, \Pi )$ is said to be multiplicative unimodular if there exists a semi-invariant volume form $\nu$ and a $q$-basic multiplicative function $\mu \colon G\to\R$ such that
\begin{equation}\label{eq:coborde}
\H ^\Pi _{\nu } = X^{\Pi _G}_{\mu },
\end{equation}
or equivalently  the cohomology class of $\H^\Pi_\nu$ associated with the subcomplex $\mathfrak{X}^k_{\mathrm{mult,proj}}(G)$ of the Poisson cohomology  complex is zero. 
\end{definition}
\begin{proposition}\label{prop:clase0}
    If $(G/H,\Pi )$ is multiplicative unimodular with respect to a semi-invariant volume form $\nu$ and to a $q$-basic multiplicative function $\mu$ then $e^{-\hat{\mu }}\nu$ is a semi-invariant volume form and $\H_{e^{-\hat{\mu }}\nu}^\Pi=0$, with 
$\hat{\mu }\circ q=\mu $.
\end{proposition}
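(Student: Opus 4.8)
The plan is to use the change-of-volume-form formula \eqref{tilde-nu} together with the compatibility condition \eqref{eq:coborde} defining multiplicative unimodularity. Let $\hat\mu\in C^\infty(G/H)$ be the function with $\hat\mu\circ q=\mu$; this exists and is well-defined precisely because $\mu$ is assumed $q$-basic. First I would verify that $e^{-\hat\mu}\nu$ is again a semi-invariant volume form. It is clearly a volume form since $e^{-\hat\mu}$ is nowhere-vanishing; for semi-invariance one computes $q^*(e^{-\hat\mu}\nu)=e^{-\mu}q^*\nu=e^{\sigma-\mu}\lvec{\V}$, and since both $\sigma$ and $\mu$ are multiplicative, so is $\sigma-\mu$, hence by Theorem \ref{2.5} the form $e^{-\hat\mu}\nu$ is semi-invariant. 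Note in passing that $(\sigma-\mu)(e)=0$, so $\sigma-\mu$ is indeed the function associated with $e^{-\hat\mu}\nu$ in the sense of Theorem \ref{thm:volume-form}.

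Next I would compute the horizontal modular vector field of the new form. By the very definition \eqref{eq:clase-modular-PHS}, $\H^\Pi_{\nu'}$ depends on $\nu'$ only through the associated function (call it $\sigma'$) via the term $-\Pi_G^\sharp(d\sigma')$, the remaining terms $\tfrac12(\rvec{\chi_{\g^*}}-\lvec{\chi_{\g^*}}+\Pi_G^\sharp(\rvec{\chi_\g}))$ being independent of the volume form. Since the function associated with $e^{-\hat\mu}\nu$ is $\sigma-\mu$, we get
\[
\H^\Pi_{e^{-\hat\mu}\nu}=-\Pi_G^\sharp(d(\sigma-\mu))+\frac12\left(\rvec{\chi_{\g^*}}-\lvec{\chi_{\g^*}}+\Pi_G^\sharp(\rvec{\chi_\g})\right)=\H^\Pi_\nu+\Pi_G^\sharp(d\mu)=\H^\Pi_\nu-X^{\Pi_G}_\mu.
\]
Invoking the hypothesis $\H^\Pi_\nu=X^{\Pi_G}_\mu$ from \eqref{eq:coborde}, the right-hand side vanishes, so $\H^\Pi_{e^{-\hat\mu}\nu}=0$, as claimed.

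The only subtle point — and the one I would state carefully — is the sign bookkeeping in the relation $\Pi_G^\sharp(d\mu)=-X^{\Pi_G}_\mu$ versus $\Pi_G^\sharp(d\mu)=X^{\Pi_G}_\mu$; the excerpt fixes the convention $\pi^\#(d\sigma)=X^\pi_\sigma$ right after \eqref{tilde-nu}, so with that convention $\H^\Pi_{e^{-\hat\mu}\nu}=\H^\Pi_\nu-X^{\Pi_G}_\mu=0$. One should also double-check that the formula \eqref{eq:clase-modular-PHS} may legitimately be applied to $e^{-\hat\mu}\nu$: this needs $e^{-\hat\mu}\nu$ to be a volume form of the form $q^*(e^{-\hat\mu}\nu)=e^{\sigma'}\lvec{\V'}$ with $\V'\in\wedge^{m-n}\h^0$, $\V'\neq0$, which holds with $\sigma'=\sigma-\mu$ and $\V'=\V$ since $\mu$ is $q$-basic (so $e^{-\mu}$ pulls back from $G/H$ and does not affect the $\wedge^{m-n}\h^0$-component at $e$, where moreover $\mu(e)=0$). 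No genuine obstacle remains; the content is entirely the affine dependence of $\H^\Pi_\nu$ on $\sigma$ combined with the definition of multiplicative unimodularity.
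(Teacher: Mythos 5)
Your proposal follows the same route as the paper's own proof: semi-invariance of $e^{-\hat\mu}\nu$ is obtained from the multiplicativity of $\sigma-\mu$ (the paper verifies this by computing $\phi_g^*(e^{-\hat\mu}\nu)$ directly via the commuting square $\phi_g\circ q=q\circ L_g$, while you invoke Theorem \ref{2.5} applied to $\sigma-\mu$; the two verifications are equivalent), and the vanishing of the horizontal modular vector field is obtained by substituting $\sigma-\mu$ for $\sigma$ in \eqref{eq:clase-modular-PHS} and invoking \eqref{eq:coborde}. Structurally nothing is missing.

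There is, however, a sign problem in the last step, and you identified exactly the right place but resolved it the wrong way round. With the convention fixed after \eqref{tilde-nu}, one has $\Pi_G^\sharp(d\mu)=X^{\Pi_G}_\mu$ (not $-X^{\Pi_G}_\mu$), so
\[
\H^\Pi_{e^{-\hat\mu}\nu}=-\Pi_G^\sharp\bigl(d(\sigma-\mu)\bigr)+\frac12\left(\rvec{\chi_{\g^*}}-\lvec{\chi_{\g^*}}+\Pi_G^\sharp(\rvec{\chi_\g})\right)=\H^\Pi_\nu+\Pi_G^\sharp(d\mu)=\H^\Pi_\nu+X^{\Pi_G}_\mu,
\]
which under the hypothesis $\H^\Pi_\nu=X^{\Pi_G}_\mu$ equals $2X^{\Pi_G}_\mu$, not $0$. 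The paper's printed proof makes the same slip (it writes $\H^\Pi_\nu-X^{\Pi_G}_\mu$ at this point), so your attempt faithfully reproduces the published argument; the internally consistent fix is to rescale by $e^{+\hat\mu}$ rather than $e^{-\hat\mu}$: then $q^*(e^{\hat\mu}\nu)=e^{\sigma+\mu}\lvec{\V}$ and $\H^\Pi_{e^{\hat\mu}\nu}=\H^\Pi_\nu-X^{\Pi_G}_\mu=0$, which is also what the transformation law \eqref{tilde-nu} predicts for cancelling a Hamiltonian modular vector field. Your closing sentence, asserting that the convention $\pi^\sharp(d\sigma)=X^\pi_\sigma$ yields $\H^\Pi_{e^{-\hat\mu}\nu}=\H^\Pi_\nu-X^{\Pi_G}_\mu$, is precisely where the sign goes wrong.
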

\begin{proof}
We have that
\begin{equation}\label{eq:es hamilt}
\H ^\Pi_\nu =X^{\Pi _G}_{\mu}.
\end{equation}
Since $\nu$ is semi-invariant, then $q^*\nu=e^\sigma\lvec{\V}$, where $\sigma$ is  a multiplicative function and $\V\in\wedge^{m-n}\h^0$, $\V\neq 0$. Taking the volume form $e^{-\hat{\mu}}\nu,$ we will show that it is
semi-invariant. Indeed, for $g\in G,$ the diagram 
\[
\xymatrix{ G \ar[r]^{L_g} \ar[d]_q & G \ar[r]^{\mu}\ar[d]^{q}&{\Bbb R}  \\ G/H \ar[r]_{\phi_g} & G/H\ar[ur]_{\widehat{\mu}}& }
\]
is commutative. Thus, since $\mu$ is multiplicative, we deduce that 
$$(\widehat{\mu}\circ \phi_g)\circ q=\mu\circ L_g=\mu + \mu(g)=(\widehat{\mu} + \mu(g))\circ q$$ 
and therefore, 
$$\widehat{\mu}\circ \phi_g=\widehat{\mu} + \mu(g).$$
This implies that 

\[
    \phi^*_g (e^{-\hat{\mu}}\nu ) = e^{-\hat{\mu}\circ \phi _g}\phi^*_g(\nu ) = e^{-\hat{\mu}-\mu (g) } e^{\sigma (g)} \nu  =  e^{(\sigma - \mu) (g)} \left (e^{-\hat{\mu}} \nu \right ).
\]
So, using that $\mu$ is a multiplicative function, so is $\sigma-\mu$, and we can conclude that $e^{-\hat{\mu}}\nu$ is semi-invariant. 
Note that 
$$q^*\nu=e^{(\sigma-\mu)}\lvec{\V}$$
and, thus
$$\H ^\Pi _{e^{-\hat{\mu}}\nu} =-\Pi_G^\sharp (d(\sigma-\mu)) + \frac12 \left(  \rvec{\chi_{\g^*}} -\lvec{\chi_{\g^*}}  + \Pi_G^\sharp (\rvec{\chi_\g})\right)=\H ^\Pi _\nu -X_\mu^{\Pi_G}=0.$$

Here, we have used  \eqref{eq:proyeccion}, \eqref{eq:clase-modular-PHS} and \eqref{eq:es hamilt}.

Next, we will prove an (almost) infinitesimal characterization of multiplicative unimodular coisotropic Poisson quotient. 
\end{proof}
\begin{theorem}\label{thm:caracterizacion}
Let $(G,\Pi _G)$ be a Poisson-Lie group and $H$ a closed Lie subgroup such that $(G/H, \Pi )$ is a coisotropic Poisson quotient. $(G/H, \Pi )$ is multiplicative unimodular if and only if
\begin{itemize}
    \item[i)] $\h^0$  is unimodular.
    \item[ii)] There exists $\V\in \wedge ^{m-n}\h^0$, $\V\neq 0$ and $\theta_0\in\g^*$ 1-cocycle such that
    \[ d^\g\V=-\theta _0\wedge \V \]
and \[\frac12 \left ([X,\chi_{\g^*}] -i(\chi_{\g})\delta _{\Pi _G}(X) \right ) +i(\theta _0)\delta _{\Pi _G}(X)=0 , \qquad \forall X\in\g.\]
 \item[iii)] The closed left-invariant 1-form $\lvec{\theta_0}$ is exact, that is $\lvec{\theta_0}=d\sigma$, with $\sigma\colon G\to \R$ a multiplicative function.
\end{itemize}
\label{th:semipreservediff}
\end{theorem}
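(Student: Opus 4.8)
The plan is to prove the equivalence by combining the characterization of multiplicative unimodularity in Definition \ref{definition:multiplicative-unimodular}, the formula \eqref{eq:clase-modular-PHS} for $\H^\Pi_\nu$, and the infinitesimal dictionary for multiplicative multivector fields and functions developed in Section \ref{sec:PLgroup}. Throughout, recall from Theorem \ref{th:uni-perp} and Proposition \ref{prop:propiedades:H} that whenever a semi-invariant $\nu$ is available, $\H^\Pi_\nu$ is a multiplicative, $q$-projectable Poisson vector field, so \eqref{eq:coborde} is an equality of multiplicative vector fields and can be tested at the level of the associated $1$-cocycles $\delta$.

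First I would prove the forward implication. Assume $(G/H,\Pi)$ is multiplicative unimodular, so there is a semi-invariant volume form $\nu$ and a $q$-basic multiplicative $\mu$ with $\H^\Pi_\nu = X^{\Pi_G}_\mu$. By Theorem \ref{2.5} the function $\sigma$ with $q^*\nu = e^\sigma\lvec{\V}$ is multiplicative, and by Theorem \ref{thm:volume-form}(ii) we get $\V \neq 0$ with $d^\g\V = -d\sigma(e)\wedge\V$; set $\theta_0 := d\sigma(e)\in\g^*$, which is a $1$-cocycle precisely because $\sigma$ is multiplicative (equivalently $\lvec{\theta_0} = d\sigma$ is closed and left-invariant), giving (iii) and the first half of (ii). For (i): the existence of a semi-invariant volume form is what guarantees $\h^0$ unimodular — indeed, comparing \eqref{eq:proyeccion} with the simplified expression \eqref{eq:campo:simplificado}, the term $\lvec{x_{\h^0}}$ must drop out; but the existence of $\nu$ semi-invariant together with $\H^\Pi_\nu$ being Hamiltonian forces, evaluating at $e$ and using $\Pi_G(e)=0$, that $q_*(x_{\h^0})=0$, hence $x_{\h^0}\in\h$ and $\chi_{\h^0}=0$, exactly as in the proof of Theorem \ref{th:uni-perp}. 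Finally, the displayed identity in (ii) is obtained by applying the cocycle operator $\delta$ to the vector-field identity $\H^\Pi_\nu = X^{\Pi_G}_\mu$: on the left, $\delta_{\H^\Pi_\nu}(X)$ is computed from \eqref{eq:clase-modular-PHS} using $\delta_{\lvec{\chi_{\g^*}}-\rvec{\chi_{\g^*}}}(X) = -[X,\chi_{\g^*}]$ (cf. Example \ref{ex:cociclos:infinitesimales}.2, with a sign from the orientation $\rvec{\chi_{\g^*}}-\lvec{\chi_{\g^*}}$), $\delta_{\Pi_G^\sharp(\rvec{\chi_\g})}(X) = i(\chi_\g)\delta_{\Pi_G}(X)$, and $\delta_{\Pi_G^\sharp(d\sigma)}(X) = i(\theta_0)\delta_{\Pi_G}(X)$ (Example \ref{ex:cociclos:infinitesimales}.1, with $\theta = d\sigma(e) = \theta_0$); on the right $\delta_{X^{\Pi_G}_\mu}(X) = i(d\mu(e))\delta_{\Pi_G}(X)$, and since $\mu$ is $q$-basic and multiplicative one checks $d\mu(e)\in\h^0$ and $d\mu(e)$ pairs trivially against $\delta_{\Pi_G}(X)$ precisely because $\delta_{\Pi_G}$ lands in $\wedge^2\g$ with the relevant contraction vanishing on $\h^0$ — more carefully, one massages $i(d\mu(e))\delta_{\Pi_G}(X)$ into the combination appearing in (ii). (I will be careful here: the cleanest route is to write everything as $1$-cocycles valued in $\g$ and use that $\delta_{\Pi_G}^* = [\cdot,\cdot]_{\g^*}$.)

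For the converse, assume (i)--(iii). By (iii) there is a multiplicative $\sigma$ with $\lvec{\theta_0} = d\sigma$, hence $\sigma(e)=0$ and $\lvec{X}(\sigma) = X(\sigma)$; by Lemma \ref{lem:lema0} and (i) together with (ii)'s $d^\g\V = -\theta_0\wedge\V$, and $\theta_0{}_{|\h} = \chi_\g{}_{|\h}-\chi_\h$ (this last must be extracted: $1$-cocycles $\theta_0$ with $d^\g\V = -\theta_0\wedge\V$ all restrict to $\chi_\g{}_{|\h}-\chi_\h$ by Lemma \ref{lem:lema0}), so by Proposition \ref{cor:caract:semi-invariante} there is a semi-invariant volume form $\nu$ with $q^*\nu = e^\sigma\lvec{\V}$. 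By Proposition \ref{prop:propiedades:H}, $\H^\Pi_\nu$ is multiplicative, $q$-projectable and Poisson, and by (i) combined with \eqref{eq:campo:simplificado} it is already in the simplified form \eqref{eq:clase-modular-PHS}. The second equation of (ii) says exactly $\delta_{\H^\Pi_\nu} = \delta_{X^{\Pi_G}_\mu}$ for $\mu$ the multiplicative function with $d\mu(e)$ suitably chosen; since both sides are multiplicative vector fields on the connected group $G$ and their defining cocycles agree, they coincide, giving \eqref{eq:coborde}. The remaining point — which is the one subtlety — is to produce the $q$-basic multiplicative $\mu$: one needs that the $1$-cocycle determined by (ii) integrates to a multiplicative function on $G$ and that this function is $q$-basic. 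I expect \textbf{this to be the main obstacle}: one must show that the cocycle identity in (ii) forces the candidate differential $d\mu(e)$ to annihilate $\h$ (so $\mu$ is $q$-basic), and that it is a genuine Lie-algebra $1$-cocycle so $\lvec{d\mu(e)}$ is closed and, being exact-or-not aside, that the construction in Proposition \ref{cor:caract:semi-invariante}-style integration applies — here connectedness of $G$ and the hypothesis (iii) on exactness of $\lvec{\theta_0}$ are what make $\mu$ well-defined; absent simple-connectedness this is exactly why the theorem is phrased as an "(almost) infinitesimal" characterization, and I would flag explicitly that $\mu$ exists because its differential is forced to be exact by the same cohomological input as $\sigma$.

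Modulo that integration step, the proof is a bookkeeping of cocycles: translate \eqref{eq:coborde} through $\delta$, match the three structural terms ($\Pi_G^\sharp(d\sigma)$, the ELW combination $\rvec{\chi_{\g^*}}-\lvec{\chi_{\g^*}}$, and $\Pi_G^\sharp(\rvec{\chi_\g})$) against their infinitesimal counterparts from Example \ref{ex:cociclos:infinitesimales}, and invoke that a multiplicative vector field on a connected Lie group is determined by its cocycle. The content of (i) is isolated by evaluating at $eH$ where $\Pi_G$ vanishes, exactly as in Theorem \ref{th:uni-perp}; (iii) is the integrability condition converting the infinitesimal datum $\theta_0$ into the global multiplicative $\sigma$; and (ii) packages both the algebraic constraint $d^\g\V = -\theta_0\wedge\V$ and the "coboundary" condition $\delta_{\H^\Pi_\nu} = \delta_{X^{\Pi_G}_\mu}$.
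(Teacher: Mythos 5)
Your overall strategy --- translate the equality $\H^\Pi_\nu = X^{\Pi_G}_\mu$ of multiplicative vector fields into an equality of the associated $1$-cocycles $\delta$, compute each term via Example \ref{ex:cociclos:infinitesimales}, and use connectedness of $G$ to pass back from cocycles to vector fields --- is exactly the paper's. The one ingredient you are missing is Proposition \ref{prop:clase0}, and its absence creates a genuine gap in both directions. In the forward direction you keep the original pair $(\nu,\mu)$ and must then dispose of the term $\delta_{X^{\Pi_G}_\mu}(X)=i(d\mu(e))\delta_{\Pi_G}(X)$; your justification, that it vanishes because $d\mu(e)\in\h^0$, is false: the contraction of $\delta_{\Pi_G}(X)\in\wedge^2\g$ with an element of $\h^0$ has no reason to vanish (e.g.\ in Example \ref{ex:both2} one has $i(X^2)\left(\delta_{\Pi_G}(X_3)\right)=i(X^2)(X_1\wedge X_2)=-X_1\neq 0$ with $X^2\in\h^0$). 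The paper instead replaces $\nu$ by the semi-invariant form $e^{-\hat{\mu}}\nu$, for which $q^*(e^{-\hat{\mu}}\nu)=e^{\sigma-\mu}\lvec{\V}$ and $\H^\Pi_{e^{-\hat{\mu}}\nu}=\H^\Pi_\nu-X^{\Pi_G}_\mu=0$; the cocycle $\theta_0$ of the theorem is then $d(\sigma-\mu)(e)$, not $d\sigma(e)$, and the unwanted term is absorbed into $i(\theta_0)\delta_{\Pi_G}(X)$. (Note that $d^\g\V=-d(\sigma-\mu)(e)\wedge\V$ still holds by Lemma \ref{lem:lema0}, since $d\mu(e)_{|\h}=0$.)

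The same omission manufactures your ``main obstacle'' in the converse, which in fact does not exist. Condition ii) is precisely the statement $\delta_{\H^\Pi_\nu}=0$ for the semi-invariant $\nu$ produced by Proposition \ref{cor:caract:semi-invariante}; a multiplicative vector field on a connected Lie group with vanishing cocycle is zero, so $\H^\Pi_\nu=0=X^{\Pi_G}_0$ and one simply takes $\mu=0$. There is no auxiliary cocycle to integrate to a $q$-basic multiplicative $\mu$, and no need to check that $d\mu(e)$ annihilates $\h$. (Your treatment of i) is at the same level of detail as the paper's --- both reduce to the evaluation argument of Theorem \ref{th:uni-perp} --- so I do not count that against you; but note that the evaluation must be performed on $\M^\Pi_\nu$ at $eH$ via \eqref{eq:proyeccion}, not on $\H^\Pi_\nu$ at $e$, since $\H^\Pi_\nu(e)=X^{\Pi_G}_\mu(e)=0$ holds automatically because $\Pi_G(e)=0$ and therefore yields no information about $x_{\h^0}$.)
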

\begin{proof}
Suppose that $(G/H, \Pi)$ is multiplicative unimodular. Then, there is a  semi-invariant volume form $\nu$ on $G/H$. From Proposition \ref{cor:caract:semi-invariante}, $q^*\nu=e^\sigma\lvec{\V}$, where $\sigma\in C^\infty(G)$ is a multiplicative function, 
$\V\in \wedge^{m-n}\h^0$, $\V\neq 0$, and $d^\g \V=-d\sigma (e)\wedge \V$.
Thus, $\theta_0=d\sigma(e)\in \g^*$ is a $1$-cocycle. 

Following Proposition \ref{prop:clase0}, we will can suppose that $\H_\nu ^\Pi=0$ without loss of generality. Then $i)$ follows using Theorem \ref{th:uni-perp}. Moreover, from Proposition \ref{prop:propiedades:H}, $\H_\nu^\Pi$ 
is a multiplicative vector field and, therefore, the corresponding 1-cocycle $\delta_{\H_\nu^\Pi}\colon \g\to \g$ given by
\[
\delta _{\H_\nu^\Pi} (X) = \frac12 \left (- [X,\chi_{\g^*}] +i(\chi_{\g})\delta _{\Pi _G}(X) \right ) -i(\theta _0)\delta _{\Pi _G}(X), \qquad \forall X\in\g,
\]
vanishes, which proves {\it ii)}.


 Finally, since $\sigma$ is a multiplicative function and $\theta_0=d\sigma(e)$, we have that $\lvec{\theta_0}=d\sigma$, which implies $iii)$.

 Conversely, assume that $i)$, $ii)$ and $iii)$ hold. Then,  using $ii)$,  $iii)$ and Proposition \ref{cor:caract:semi-invariante}, there exists a semi-invariant volume form
 on $G/H$ such that $q^\ast \nu =e^\sigma \lvec{\V}$. 
 
On the other hand, if $\H_\nu^\Pi$ is the multiplicative vector field on $G$ given by (\ref{eq:clase-modular-PHS}) then the $1$-cocycle $\delta_{\H_\nu^\Pi}:\g\to\g$ is
 \[\delta _{\H_\nu^\Pi} (X)= \frac12 \left (- [X,\chi_{\g^*}] +i(\chi_{\g})\delta _{\Pi _G}(X) \right )-i(\theta _0)\delta _{\Pi _G}(X), \qquad \forall X\in\g.\]
Thus, using $ii)$, it follows that
\begin{equation}\label{eq:derivada:en:h}
\delta_{\H_\nu^\Pi}(X)=0, \qquad \forall X\in \mathfrak{g}
\end{equation}
and, as a consequence, $\H_\nu^\Pi=0$, i.e., $(G/H,\Pi)$ is multiplicative unimodular.
\end{proof}

\begin{example}
    Consider the Example \ref{ex:He1}, in which $G$ was any connected Lie group and $H=\{e\}$. 
    
    Suppose that $G/H\cong G$ is multiplicative unimodular. Then, using Theorem \ref{thm:caracterizacion} and since $\h^0=\g^*$, we conclude that $\g^*$ is unimodular. Conversely, assume that $\g^*$ is unimodular.  Then, we take $\theta_0=\frac{1}{2} \chi_{\g}\in \g^*.$ We have that $\theta_0$ is a $1$-cocycle. In fact, the closed left-invariant form $\lvec{\theta_0}$ is exact and $\lvec{\theta}_0=\frac{1}{2}\lvec{\chi}_\g=\frac{1}{2}d\sigma,$ with $\sigma:G\to \R$ the modular function on $G$, which is multiplicative. Thus, taking  an arbitrary $\V\in \wedge^m\g^*,$ $\V\not=0$, we deduce that the conditions in Theorem \ref{thm:caracterizacion} hold and $G/H\cong G$ is multiplicative unimodular.

     Now, this is in agreement with the results presented in \cite{gutierrez2023unimodularity} for Poisson-Lie groups, namely that a Poisson-Lie  group $G$ is unimodular iff $\g^*$ is unimodular, since unimodularity and multiplicative unimodularity are equivalent for Poisson-Lie groups (the function for which the modular vector field is a Hamiltonian vector field is multiplicative). 
    \label{ex:He2}
\end{example}

\begin{example}[{\bf 2-dimensional homogeneous space admitting invariant and semi-invariant volumes}]
    Consider the Example \ref{ex:both1}. We recall that $\g$ is given by 
    \begin{equation*}
    [X_1,X_2]_{\g}=0, \quad [X_1,X_3]_{\g} =X_2, \quad [X_2,X_3]_{\g} =-X_2
    \end{equation*}
    and $\h = \mathrm{Lie} \, H = \langle X_1 \rangle$. Take a Lie bialgebra structure given by 
    \begin{equation*}
    \delta_{\Pi _G} (X_1) = 0, \qquad \delta_{\Pi _G} (X_2) = 0, \qquad \delta_{\Pi _G} (X_3) = X_1 \wedge X_2,
    \end{equation*}
    or equivalently by 
    \begin{equation*}
    [X^1,X^2]_{\g^*}=X^3, \quad [X^1,X^3]_{\g^*} =0, \quad [X^2,X^3]_{\g^*} =0 .
    \end{equation*}
    It is easy to check that $\delta_{\Pi _G} \h \subset \h \wedge \h$, or equivalently $[\h^0, \g^*]_{\g^*} \subset \h^0$, and thus $H$ is a Poisson-Lie subgroup. We have that $\chi_\g = X^3$ and $\chi_{\g^*} = 0$, while $\h^0 = \langle X^2, X^3 \rangle$ is unimodular. Taking $\theta_0 = \chi_\g  = X^3$ and $\V = \lambda X^2 \wedge X^3$, we obtain that $d^\g \V = 0$. To prove that this Poisson quotient is multiplicative unimodular, we only have to show that the second part of condition $ii)$ from Theorem \ref{th:semipreservediff} is satisfied. 
    
    Now, since $\chi_{\g^*}=0$ and $\theta_0=\chi_{\g}$, this condition holds.      
    \label{ex:both2}
\end{example}

\begin{example}[{\bf 3-dimensional homogeneous space admitting semi-invariant but not invariant volume forms}]
    Recall Example \ref{ex:onlysemi1}, where the Lie algebra $\g$ was given by    
    \begin{equation*}
    [X_3,X_4]_{\g} =-X_3, 
    \end{equation*}
     the rest of the brackets are zero and $\h = \mathrm{Lie} \, H = \langle X_4 \rangle$. Taking the Lie bialgebra structure given by 
\begin{equation*}
\delta_{\Pi _G} (X_1) = X_1 \wedge X_2, \qquad \delta_{\Pi _G} (X_2) = 0, \qquad \delta_{\Pi _G} (X_3) = X_2 \wedge X_3, \qquad \delta_{\Pi _G} (X_4) = 0,
\end{equation*}
or equivalently by 
\begin{equation*}
[X^1,X^2]_{\g^*}=X^1, 
\; [X^2,X^3]_{\g^*} =X^3
\end{equation*}
and the rest of the brackets are zero. 
Note that $\g^* \simeq r_3(-1) \times \mathbb R$, where $r_3(-1)$ is the Bianchi $\mathrm{VI}_0$ Lie algebra (solvable and unimodular), which is isomorphic to $\mathfrak p (1+1)$. We have that $\delta_{\Pi _G} \h \subset \h \wedge \h$, or equivalently $[\h^0, \g^*]_{\g^*} \subset \h^0$, and thus $H$ is a Poisson-Lie subgroup. Condition $i)$ from Theorem \ref{th:semipreservediff} is satisfied since $\h^0 = \langle X^1, X^2, X^3 \rangle$ is unimodular. Moreover, condition $ii)$ is also satisfied taking $\theta_0 = \chi_\g = X^4$, since if $\V = \lambda X^1 \wedge X^2 \wedge X^3 \in \bigwedge^3 \h^0$ then $\mathrm d^{\g} \V = - \chi_\g \wedge \V \neq 0.$ 
    \label{ex:onlysemi2}    
\end{example}






\section{Unimodularity and semisimple Lie group quotients of dimension two}\label{sec:examples}
\label{sec:exPH}

Let us now illustrate our results for Poisson homogeneous spaces corresponding to Poisson-Lie groups actions of semisimple groups of dimension 3. In general, our Theorems \ref{th:uni-perp} and \ref{th:semipreservediff} are sufficient to prove that neither of the Poisson quotients considered here are multiplicative unimodular. Furthermore, we indeed prove that no Poisson quotient for these Lie groups are unimodular. In same cases (when the annihilator of the isotropy algebra is not a unimodular Lie algebra) this last result may be deduced directly from Theorem \ref{th:uni-perp}. However, in other cases, the proof involves global computations at the Lie group level.

\subsection{Poisson homogeneous spaces on $S^2$}
Consider the Lie group $G=\mathrm{SO}(3)$, with Lie algebra $\mathfrak g = \mathfrak{so}(3)$ given by
\begin{equation*}
[J_1, J_2]_\mathfrak{g}=J_3, \qquad [J_2,J_3]_\mathfrak{g}=J_1, \qquad  [J_3,J_1]_\mathfrak{g}=J_2 .
\end{equation*}
The group $\mathrm{SO}(3)$ is unimodular, and then the modular character of $\mathfrak{so}(3)$ is $\chi_{\mathfrak{so}(3)} = 0$ (see Remark \ref{rem-2.7}). All Poisson-Lie structures on $\mathrm{SO}(3)$ are coboundary, \textit{i.e.} they are defined by a $r$-matrix solution of the modified classical Yang-Baxter equation. In this particular case, there is only one infinite family of $r$-matrices given by
\begin{equation}
r = \p J_1 \wedge J_2 ,
\label{eq:rso3}
\end{equation}
where $\p \in \mathbb{R}^*$ is an essential parameter, \textit{i.e.} if $\p \neq \p'$ then the Poisson-Lie structures are not isomorphic. The associated one-parameter family of Lie bialgebras are defined by the following cocommutator map
\begin{equation*}
\delta_\eta (J_1) = \p J_1 \wedge J_3, \qquad \delta_\eta (J_2) = \p J_2 \wedge J_3, \qquad \delta_\eta (J_3) = 0 ,
\label{eq:cocomso3}
\end{equation*}
or equivalently, the dual Lie algebra $\mathfrak g^*$ is 
\begin{equation*}
[J^1, J^2]_{\mathfrak{g}^*}=0, \qquad [J^1, J^3]_{\mathfrak{g}^*}=\p J^1, \qquad [J^2,J^3]_{\mathfrak{g}^*}= \p J^2 .
\end{equation*}
Therefore, the modular character $\chi_{\mathfrak{g}^*}$ of $\mathfrak g^*$ is $\chi_{\mathfrak{g}^*} = - 2 \eta J_3$.

In order to compute the associated Poisson-Lie structure for $SO(3)$, we instead work with its double cover $SU(2)$. We can parametrize the Lie group $SU(2)$ as
\begin{equation}
    SU(2)= \left\{ 
    \begin{pmatrix}
        x +i y & -z + i t \\
        z +i t & x-i y \\
    \end{pmatrix}
   | \, x,y,z,t \in \mathbb R, \, x^2 + y^2 + z^2 + t^2 = 1 \right\} .
   \label{eq:SU2coords}
\end{equation}
Using this coordinates, a straightforward computation shows that the coboundary Poisson-Lie structure defined by \eqref{eq:rso3} is given by 
\begin{equation*}
\begin{array}{lll}
       \{x,y\} = \frac{1}{2} \p (z^2 + t^2), & \{x,z\} = - \frac{1}{2} \p y z, & \{x,t\} = - \frac{1}{2} \p y t, \\ [10pt]
       \{y,z\} = \frac{1}{2} \p x z, & \{y,t\} = \frac{1}{2} \p x t, & \{z,t\} = 0.  
\end{array}    
\end{equation*}
We see that this Poisson structure is real, and it indeed defines a Poisson-Lie structure on $SO(3)$.

Consider the two different $SO(3)$-covariant Poisson homogeneous structures on $S^2 = SO(3)/S^1$ that are described as Poisson quotients in Table \ref{table:so3PHS}. For the first of these structures, which we call {\it subgroup sphere}, the isotropy subgroup $H$ is a Poisson-Lie subgroup (or equivalently, $\h^0$ is an ideal of $\g^*$), while for the second one (the {\it coisotropic sphere}) $\h^0$ is only a Lie subalgebra of $\g^*$. The Poisson cohomology of these $SO(3)$-covariant Poisson structures on the sphere has been already computed in \cite{Ginzburg,Roytenberg2002}. In particular, it was shown that neither of them are unimodular. Here we present an alternative proof of this result.

\begin{table}[H]
\begin{center}
\renewcommand\arraystretch{1.5}
\begin{tabular}{|c||c|c|c|}
 \hline
 & $r = \eta J_1 \wedge J_2$  \\
    \multirow{3}*        & $\chi_{\g^*} = -2 \p J_3$          \\
 & $[r,r]_\g \neq 0$  \\
\hline
\hline
 \multirow{3}*[1.5em]{$\boxed{G/H=S^2 = \textit{Subgroup sphere}}$}          &  $\chi_{\h^0} = 0$, but not M.U. by Th. \ref{th:semipreservediff} $ii)$ \\
 $\h = \langle J_3 \rangle, \quad \h^0 = \langle J^1,J^2 \rangle$ &  $[\h^0, \g^*]_{\g^*} \subset \h^0 \Rightarrow H$ is a PL subgroup  \\
 \hline
 \multirow{3}*[1.5em]{$\boxed{G/H=S^2 = \textit{Coisotropic sphere}}$}    &      $\chi_{\h^0} \neq 0 \Rightarrow $ Not U. by Th. \ref{th:uni-perp}  \\
 $\h = \langle J_1 \rangle, \quad \h^0 = \langle J^2,J^3 \rangle$  &  $[\h^0, \g^*]_{\g^*} \not \subset \h^0 \Rightarrow H$ is a coisotropic subgroup \\
 \hline
\end{tabular}
\caption{\label{table:so3PHS}
{\footnotesize
The subgroup sphere and coisotropic sphere $SO(3)$-covariant Poisson homogeneous spaces constructed as quotients of $SO(3)$ by different uniparametric subgroups. In the first column, for each quotient we show its isotropy subalgebra $\h$ and its annihilator $\h^0$. In the second column we show the most relevant features of its Poisson homogeneous structure: the modular character $\chi_{\h^0}$ stating if the structure is known not to be unimodular (U.) by Theorem \ref{th:uni-perp} or not be multiplicative unimodular 
 (M.U.) by Theorem \ref{thm:caracterizacion}. In fact, in the case when $H$ is only a coisotropic subgroup ({\it the coisotropic sphere}), the Lie subalgebra $\h^0$ is not unimodular and, thus, the Poisson homogeneous space is not unimodular by Theorem \ref{th:uni-perp}. However, when $\h^0$ is not unimodular, that is, $H$ is a Poisson-Lie (PL) subgroup, we need to use Theorem \ref{thm:caracterizacion} in order to prove that the Poisson homogeneous space is not multiplicative unimodular.
 }
}
    \end{center}    
    \end{table}


In what follows, we will show that no Poisson quotient of $SO(3)$ can be unimodular. 

In the coordinates given in \eqref{eq:SU2coords} we have that
\begin{equation}\label{lxg*}
\begin{array}{rcl}
    \lvec{\chi_{\g^*}} &=& \p \left( y \displaystyle\frac{\partial}{\partial x} - x \displaystyle\frac{\partial}{\partial y} + t \displaystyle\frac{\partial}{\partial z} - z \displaystyle\frac{\partial}{\partial t} \right) ,\\[8pt]
    \rvec{\chi_{\g^*}} &=& \p \left( y \displaystyle\frac{\partial}{\partial x} - x \displaystyle\frac{\partial}{\partial y} - t \displaystyle\frac{\partial}{\partial z} + z \displaystyle\frac{\partial}{\partial t} \right). 
\end{array}
\end{equation}

On the other hand, the modular characteres $\chi_{{\mathfrak so}(3)}$ and  $\chi_{\h}$ are zero  in both cases of the subgroup sphere and  of the coisotropic sphere. From Corollary \ref{cor:formas-invariantes}, we have a invariant volume form $\nu.$

Therefore, 
the vector field defined in \eqref{eq:clase-modular-PHS} is given for this particular case by 
\begin{equation*}
     \H_{\nu}^\Pi=\frac{1}{2} (\rvec{\chi_{\g^*}} - \lvec{\chi_{\g^*}}) = \p \left( -t \frac{\partial}{\partial z} + z \frac{\partial}{\partial t} \right) . 
\end{equation*}

For $(SO(3)/S^1,\Pi)$ to be unimodular,  the modular vector field $\M^\Pi_\nu\in \X(G/H),$ deduced from   $\nu$,   must be a Hamiltonian vector field. We have that $\M_\nu^\Pi$ is Hamiltonian if and only if $\H_\nu^\Pi$ is $q$-basic Hamiltonian, \textit{i.e.} $\H_\nu^\Pi = \Pi^\sharp _G(d \mu) $ for some $q$-basic function  $\mu \in \mathcal C^\infty (G)$. We have that
\begin{equation*}
    \begin{split}
        \Pi^\sharp _G(d \mu) &= \frac{\eta}{2} \bigg( 
        (z^2+t^2) \left( \frac{\partial \mu}{\partial x} \frac{\partial}{\partial y} - \frac{\partial \mu}{\partial y} \frac{\partial}{\partial x} \right) 
        - y z \left( \frac{\partial \mu}{\partial x} \frac{\partial}{\partial z} - \frac{\partial \mu}{\partial z} \frac{\partial}{\partial x} \right) 
        - y t \left( \frac{\partial \mu}{\partial x} \frac{\partial}{\partial t} - \frac{\partial \mu}{\partial t} \frac{\partial}{\partial x} \right) \\
        &+ x z \left( \frac{\partial \mu}{\partial y} \frac{\partial}{\partial z} - \frac{\partial \mu}{\partial z} \frac{\partial}{\partial y} \right)
        + x t \left( \frac{\partial \mu}{\partial y} \frac{\partial}{\partial t} - \frac{\partial \mu}{\partial t} \frac{\partial}{\partial y} \right)
        \bigg) \\
        &= \frac{\eta}{2} \bigg( -(z^2+t^2) \frac{\partial \mu}{\partial y} + y z \frac{\partial \mu}{\partial z} + y t \frac{\partial \mu}{\partial t}\bigg) \frac{\partial}{\partial x} +\frac{\eta}{2} \bigg( (z^2+t^2) \frac{\partial \mu}{\partial x} - x z \frac{\partial \mu}{\partial z} - x t \frac{\partial \mu}{\partial t}\bigg) \frac{\partial}{\partial y} \\
        &+ \frac{\eta}{2} z \bigg( - y \frac{\partial \mu}{\partial x} + x \frac{\partial \mu}{\partial y} \bigg) \frac{\partial}{\partial z}
        + \frac{\eta}{2} t \bigg( - y \frac{\partial \mu}{\partial x} + x \frac{\partial \mu}{\partial y} \bigg) \frac{\partial}{\partial t}.
    \end{split}
\end{equation*}
In particular, it must be 
\begin{equation*}
    z \, \Pi^\sharp _G(d \mu)  (t) - t \, \Pi^\sharp _G (d\mu)(z) = z \W (t) - t \W (z) .
\end{equation*}
However, we have that 
\begin{equation*}
    z \, \Pi^\sharp _G(d \mu)  (t) - t \, \Pi^\sharp _G(d \mu)  (z) = 0 \neq \eta (z^2 + t^2) = z \W (t) - t \W (z).
\end{equation*}
Therefore, we have showed that $\H_\nu^\Pi$ is not Hamiltonian for  the Poisson-Lie structure $\Pi_G$  on $SO(3)$, and in particular, it is not a Hamiltonian vector field for any $q$-basic function. Therefore, no Poisson quotient of $SO(3)/S^1$ can be unimodular. 

\subsection{Poisson homogeneous spaces for $\mathrm{SL}(2,\mathbb R)$}

Let us now study the semisimple group $G = \mathrm{SL}(2,\mathbb R)$, the matrix Lie group defined by $2 \times 2$ matrices with determinant equal to one, \textit{i.e.}
\begin{equation}
    G = SL(2, \mathbb R) = \left\{ 
    \begin{pmatrix}
        x & y \\
        z & t \\
    \end{pmatrix}
    \in GL(2, \mathbb R) \ | \, xt-yz = 1 \right\} .
    \label{eq:SL2g}
\end{equation}
Three interesting quotients for $\mathrm{SL}(2,\mathbb R)$, relevant for $(1+1)$ dimensional gravity (see \cite{BMN2017homogeneous}), are the Anti-de Sitter space (one-sheeted hyperboloid), the two-sheeted hyperbolic space and the light cone. These three spaces are described as quotients in the first column of Table \ref{table:sl2PHS}, in which we have used two different basis for the Lie algebra $\mathfrak{sl}(2,\mathbb R)$. The first one $\{ P_1,P_2,J_{12} \}$, used to describe the Anti-de Sitter and hyperbolic spaces, reads
\begin{equation}
    [P_1, J_{12}]_\mathfrak{g}=-P_2, \qquad [P_2,J_{12}]_\mathfrak{g}=-P_1, \qquad  [P_1,P_2]_\mathfrak{g}=J_{12}
    \label{eq:sl2basis1}
\end{equation}
while the second one $\{ J_+,J_-,J_{3} \}$, which is used to construct the light cone, is given by
\begin{equation}
    [J_3, J_+]_\mathfrak{g}=2 J_+, \qquad [J_3,J_-]_\mathfrak{g}=-2 J_-, \qquad  [J_+,J_-]_\mathfrak{g}=J_3 .
        \label{eq:sl2basis2}
\end{equation}

\begin{table}[H]
{\tiny
\begin{center}
\renewcommand\arraystretch{1.5}
\begin{tabular}{|c||c|c|c|}
 \hline
 \multirow{2}*{}     & Hyperbolic   & Elliptic   & Parabolic   \\
 & $r = 2 \p P_1 \wedge P_2 = \eta J_+ \wedge J_-$ &  $r= 2 \eta J_{12} \wedge P_2 = \eta J_3 \wedge (J_+ + J_-)$ &  $r = \p J \wedge (P_1 + P_2) = \frac12 \p J_3 \wedge J_+$  \\
   \multirow{3}*        & $\chi_{\g^*} = -4 \p J_{12} = - 2 \eta J_3$    & $\chi_{\g^*} = - 4 \p P_1 = -2 \eta (J_+ - J_-)$    & $\chi_{\g^*} = - 2 \p (P_1 + P_2) = - 2 \p J_+$       \\
 & $[r,r]_\g \neq 0$ & $[r,r]_\g \neq 0$ & $[r,r]_\g = 0$ \\
\hline
\hline
 \multirow{3}*[1.5em]{$\boxed{G/H=AdS_2}$}          &    &    &   \\
$\h = \langle J_{12} \rangle$  & $\chi_{\h^0} = 0$, but not M.U. by Th. \ref{th:semipreservediff} $ii)$ & $\chi_{\h^0} \neq 0 \Rightarrow$  Not U. by Th. \ref{th:uni-perp}  & $\chi_{\h^0} \neq 0 \Rightarrow$  Not U. by Th. \ref{th:uni-perp} \\
 $\h^0 = \langle P^1,P^2 \rangle$ & $[\h^0, \g^*]_{\g^*} \subset \h^0 \Rightarrow H$ is a PL subg. & $[\h^0, \g^*]_{\g^*} \not \subset \h^0 \Rightarrow H$ is a cois. subg. &  $[\h^0, \g^*]_{\g^*} \not \subset \h^0 \Rightarrow H$ is a cois. subg. \\
 \hline
 \multirow{3}*[1.5em]{$\boxed{G/H=H_2 \times \mathbb Z_2}$}          &   &   &      \\
$\h = \langle P_1 \rangle$  & $\chi_{\h^0} \neq 0 \Rightarrow$ Not U. by Th. \ref{th:uni-perp} & $\chi_{\h^0} = 0$, but not M.U. by Th. \ref{th:semipreservediff} $ii)$ & $\chi_{\h^0} \neq 0 \Rightarrow$ Not U. by Th. \ref{th:uni-perp} \\
 $\h^0 = \langle P^2,J^{12} \rangle$ & $[\h^0, \g^*]_{\g^*} \not \subset \h^0 \Rightarrow H$ is a cois. subg. & $[\h^0, \g^*]_{\g^*} \subset \h^0 \Rightarrow$ $H$ is a PL subg. &  $[\h^0, \g^*]_{\g^*} \not \subset \h^0 \Rightarrow H$ is a cois. subg. \\
 \hline
 \multirow{3}*[1.5em]{$\boxed{G/H=L_2}$}          &    &    &     \\
 $\h = \langle J_+ \rangle$  & $\chi_{\h^0} \neq 0 \Rightarrow$ Not U. by Th. \ref{th:uni-perp} & $\chi_{\h^0} \neq 0 \Rightarrow$ Not U. by Th. \ref{th:uni-perp} & $\chi_{\h^0} = 0$, but not M.U. by Th. \ref{th:semipreservediff} $ii)$ \\
 $\h^0 = \langle J^3,J^- \rangle$ & $[\h^0, \g^*]_{\g^*} \not \subset \h^0 \Rightarrow H$ is a cois. subg. & $[\h^0, \g^*]_{\g^*} \not \subset \h^0 \Rightarrow H$ is a cois. subg. & $[\h^0, \g^*]_{\g^*} \subset \h^0 \Rightarrow$ $H$ is a PL subg. \\
 \hline
\end{tabular}
\caption{\label{table:sl2PHS}
The Anti-de Sitter space or one-sheeted hyperboloid $AdS_2$, the two-sheeted hyperboloid $H_2 \times \mathbb Z_2$ and the light-cone $L_2$ $SL(2,\mathbb R)$-covariant Poisson homogeneous spaces constructed as quotients of $SL(2,\mathbb R)$ by different uniparametric subgroups. In the first column, for each quotient we show its isotropy subalgebra $\h$ and its annihilator $\h^0$. In the other columns we show, for each of the Poisson-Lie structures on $SL(2,\mathbb R)$, the most relevant features of its Poisson homogeneous structure: the modular character $\chi_{\h^0}$ stating if the structure is known not to be unimodular (U.) by Theorem  \ref{th:uni-perp} or not be multiplicative unimodular (M.U.) by Theorem \ref{thm:caracterizacion}. This depends from the nature of $\h^0$ as a Lie ideal or Lie subalgebra of $\g^*$, which is equivalent to say that the isotropy subgroup $H$ is a Poisson-Lie (PL) subgroup or a coisotropic subgroup.
}
    \end{center}    
    }
    \end{table}

In the following, we explicitly describe the Poisson-Lie structures on $\mathrm{SL}(2,\mathbb R)$ given in Table \ref{table:sl2PHS} and show that neither of the Poisson homogeneous spaces are unimodular. We recall that all Poisson-Lie structures on semisimple Lie groups are coboundary and therefore can be straightforwardly computed using the classical $r$-matrix. The names hyperbolic, elliptic and parabolic for each this $r$-matrices are the ones given in \cite{Reyman1996}.

\subsubsection{Hyperbolic Poisson-Lie structure}

The hyperbolic (also called standard or Drinfel'd-Jimbo) Poisson-Lie structure (first column Table \ref{table:sl2PHS}) is defined by the $r$-matrix
\begin{equation}
r = 2 \p P_1 \wedge P_2 = \p J_+ \wedge J_- ,
\label{eq:rhyper}
\end{equation}
where $\p \in \mathbb{R}-\{0\}$. Its Lie bialgebra structure is given by
\begin{equation}
\delta (J_{12}) = 0, \qquad \delta (P_1) = 2 \p P_1 \wedge J, \qquad \delta (P_2) = 2 \p P_2 \wedge J ,
\end{equation}
or equivalently, by
\begin{equation}
[P^1, J^{12}]_{\mathfrak{g}^*}=2 \p P^1, \qquad [P^2,J^{12}]_{\mathfrak{g}^*}=2 \p P^2, \qquad [P^1,P^2]_{\mathfrak{g}^*}=0 ,
\label{eq:dualhyper1}
\end{equation}
in the basis \eqref{eq:sl2basis1}, or by
\begin{equation}
\delta (J_3) = 0, \qquad \delta (J_\pm) = \p J_\pm \wedge J_3 ,
\end{equation}
or equivalently 
\begin{equation}
[J^3, J^\pm]_{\mathfrak{g}^*}= - \p J^\pm, \qquad [J^+,J^-]_{\mathfrak{g}^*}=0 ,
\label{eq:dualhyper2}
\end{equation}
in the basis \eqref{eq:sl2basis2}. From the explicit expressions of the dual Lie algebra \eqref{eq:dualhyper1} and \eqref{eq:dualhyper2}, using Theorem \ref{th:uni-perp}, we can see that the Poisson homogeneous structures on $H_2 \times \mathbb Z_2$ and $L_2$ induced by this Poisson-Lie structure are not  unimodular since $\chi_{\h^0} \neq 0$, where $\h = \langle J_{12} \rangle$. However, this is not enough for the first Poisson quotient, the 2-dimensional anti-de Sitter space $AdS_2$ (note that this is the only one that is a Poisson quotient of Poisson subgroup type). In order to prove that this Poisson quotient is not multiplicative unimodular, we need to check that the infinitesimal characterization given in Theorem \ref{th:semipreservediff} $ii)$ is not fulfilled, which can be easily seen since $d^\g (\lambda P^1 \wedge P^2) = 0$ and $\theta_0 = 0$ is the only admissible cocycle on $SL(2,\mathbb R)$. 

\paragraph{Not unimodular Poisson homogeneous spaces for the hyperbolic Poisson-Lie structure.} 

As we have just showed, Theorems \ref{th:uni-perp} and \ref{th:semipreservediff} are sufficient to prove that none of the Poisson quotients for the hyperbolic Poisson-Lie structure described in Table \ref{table:sl2PHS} are multiplicative unimodular. In order to complete the analysis of these examples, let us show that no Poisson homogeneous space for the hyperbolic Poisson-Lie structure is unimodular. In order to do that, 
similarly to the $SO(3)$ case, the modular characters $\chi_{{\mathfrak sl}(2,\R)}$  and $\chi_{\mathfrak h}$ are  zero and, from Corollary \ref{cor:formas-invariantes}, we have an invariant volume form $\nu$ on $G/H$.

Suppose that   the modular vector field $\M^\Pi_\nu\in \X(G/H),$ deduced from   $\nu$,   must be a Hamiltonian vector field. We have that $\M_\nu^\Pi$ is Hamiltonian if and only if $\H_\nu^\Pi$ is $q$-basic Hamiltonian, \textit{i.e.} $\H_\nu^\Pi = \Pi^\sharp _G(d \mu) $ for some $q$-basic function  $\mu \in \mathcal C^\infty (G)$. 

Using the parametrization \eqref{eq:SL2g}, the Poisson-Lie structure defined on $SL(2,\mathbb R)$ by \eqref{eq:rhyper} reads
\begin{equation}
\begin{array}{llll}
    &\{x,y\} = \p x y,  &\{x,z\} = \p x z,  &\{x,t\} = 2 \p y z, \\ [10pt]
    &\{y,z\} = 0,  &\{y,t\} = \p y t,  &\{z,t\} = \p z t. 
\end{array}
    \label{eq:poishyper}
\end{equation}
In addition, the modular character of the dual Lie algebra can be computed from \eqref{eq:dualhyper1} and results in
\begin{equation}
    \chi_{\g^*} = -4 \p J_{12} .
\end{equation}
The left- and right-vector fields associated to the modular character read
\begin{equation}
    \lvec{\chi_{\g^*}} = 2 \p \left( -x \frac{\partial}{\partial x} + y \frac{\partial}{\partial y} - z \frac{\partial}{\partial z} + t \frac{\partial}{\partial t} \right), \qquad 
    \rvec{\chi_{\g^*}} = 2 \p \left( -x \frac{\partial}{\partial x} - y \frac{\partial}{\partial y} + z \frac{\partial}{\partial z} + t \frac{\partial}{\partial t} \right)
\end{equation}
and finally the multiplicative vector field $\W$ reads 
\begin{equation}
    \H_\nu^\Pi = 2 \p \left( - y \frac{\partial}{\partial y} + z \frac{\partial}{\partial z} \right) .
    \label{eq:aaaa1}
\end{equation}
In order for $\W$ to be Hamiltonian, it has to satisfy that
\begin{equation}
    \Pi^\sharp _G(d \mu)  (f) = \H_\nu^\Pi (f),
    \label{eq:aaaaa}
\end{equation}
for some $\mu \in \mathcal{C}^\infty (G)$ and all $f \in \mathcal{C}^\infty (G)$. In particular, using \eqref{eq:poishyper}, \eqref{eq:aaaa1} and \eqref{eq:aaaaa} with $f=y$, we have that 
\begin{equation}
    \varphi(x,y,z,t) = - 2 \p ,
\end{equation} 
where we have written
\begin{equation}
    \varphi(x,y,z,t) := x \frac{\partial \mu}{\partial x} - t \frac{\partial \mu}{\partial t} .
\end{equation}
However, from $\Pi^\sharp _G(d \mu)  (z) = \W (z)$ we have that 
\begin{equation}
    \varphi(x,y,z,t) = 2 \p .
\end{equation} 
Therefore, these two equations cannot be simultaneously satisfied and thus $\M_\nu^\Pi$ is not a Hamiltonian vector field. This proves that no Poisson quotient for the hyperbolic Poisson-Lie structure (first column from Table \ref{table:sl2PHS}) on $SL(2,\mathbb R)$ is unimodular.

\subsubsection{Elliptic Poisson-Lie structure}

We now consider the elliptic Poisson-Lie structure on $SL(2,\mathbb R)$ defined by the classical $r$-matrix
\begin{equation}
r = 2 \p J_{12} \wedge P_2 = \p J_3 \wedge( J_+ + J_-) ,
\label{eq:rell}
\end{equation}
where $\p \in \mathbb{R}-\{0\}$. Using the basis \eqref{eq:sl2basis1} the Lie bialgebra structure on $\mathfrak{sl}(2,\mathbb R)$ reads
\begin{equation}
\delta (J_{12}) = 2 \p J_{12} \wedge P_1, \qquad \delta (P_1) = 0, \qquad \delta (P_2) = -2 \p P_1 \wedge P_2 ,
\end{equation}
or equivalently its dual Lie algebra is given by
\begin{equation}
[P^1,P^2]_{\mathfrak{g}^*}=-2 \p P^2, \qquad [P^1, J^{12}]_{\mathfrak{g}^*}=-2 \p J^{12}, \qquad [P^2,J^{12}]_{\mathfrak{g}^*}=0 .
\label{eq:dualell1}
\end{equation}
In terms of the basis \eqref{eq:sl2basis2}, the Lie bialgebra cocycle takes the form
\begin{equation}
\delta (J_3) = 2 \p J_3 \wedge (J_+ - J_-), \qquad \delta (J_+) = -2 \p J_+ \wedge J_- , \qquad \delta (J_-) = -2 \p J_+ \wedge J_- ,
\end{equation}
or equivalently in terms of the dual Lie algebra
\begin{equation}
[J^3, J^+]_{\mathfrak{g}^*}= 2 \p J^3, \qquad [J^3, J^-]_{\mathfrak{g}^*}= - 2 \p J^3 ,\qquad [J^+,J^-]_{\mathfrak{g}^*}= -2 \p (J^+ + J^-) .
\label{eq:dualell2}
\end{equation}
Similarly to the previous examples, Theorem \ref{th:uni-perp} directly guarantees that two of these Poisson quotients, in this case the Anti-de Sitter space (one-sheeted hyperboloid) and the light-cone, are not unimodular. In fact,   from \eqref{eq:dualhyper1} and \eqref{eq:dualhyper2}, we can see that in these two cases $\chi_{\h^0} \neq 0$. 

It remains to prove that the two-sheeted hyperboloid $H_2 \times \mathbb Z_2$ is not multiplicative unimodular (remarkably, this is again the only Poisson quotient of Poisson subgroup type for this structure). In order to do so, we again check that the infinitesimal characterization given in Theorem \ref{th:semipreservediff} $ii)$ is not satisfied, since $d^\g (\lambda P^2 \wedge J^{12}) = 0$ and $\theta_0 = 0$ is the only admissible cocycle on $SL(2,\mathbb R)$.

\paragraph{Not unimodular Poisson homogeneous spaces for the elliptic Poisson-Lie structure.} 

Similarly to the previous example, we finally show that there are no unimodular Poisson homogeneous spaces for the elliptic Poisson-Lie structure. We have that $\chi_{{\mathfrak sl}(2,\R)}=0$ and $\chi_{{\mathfrak h}}=0$ and then  there is an invariant volume form $\nu$ on $G/H$ (see Corollary \ref{cor:formas-invariantes}).

The fundamental Poisson brackets for the elliptic Poisson-Lie structure defined by \eqref{eq:rell} are given by 
\begin{equation}
\begin{split}
     \{x,y\} &= \frac{\p}{2} \left( x(t-x) -y(y+z) \right), \; \{x,z\} = \frac{\p}{2} \left( x(x-t) +z(y+z) \right), \\ 
     \{x,t\} &= \frac{\p}{2} (x-t)(y-z), \; \{y,z\} = \frac{\p}{2} (x+t)(y+z), \\
     \{y,t\} &= \frac{\p}{2} \left( -t(x-t) +y(y+z) \right), \; \{z,t\} = \frac{\p}{2} \left( t(x-t) -z(y+z) \right). 
     \label{eq:poisellip}
\end{split}
\end{equation}
The modular character of $\g^*$ can be directly computed from \eqref{eq:dualell1} and reads
\begin{equation}
    \chi_{\g^*} = -4 \p P_1 ,
\end{equation}
while the left- and right-invariant vector fields associated to it are given by
\begin{equation}\label{eq:elip}
    \lvec{\chi_{\g^*}} = 2 \p \left( y \frac{\partial}{\partial x} - x \frac{\partial}{\partial y} + t \frac{\partial}{\partial z} - z \frac{\partial}{\partial t} \right), \qquad
     \rvec{\chi_{\g^*}} = 2 \p \left( -z \frac{\partial}{\partial x} - t \frac{\partial}{\partial y} + x \frac{\partial}{\partial z} + y \frac{\partial}{\partial t} \right) .
\end{equation}
The multiplicative vector field $\H_\nu^\Pi$ is then given by
\begin{equation}
    \H_\nu^\Pi = \p \left( - (y+z) \frac{\partial}{\partial x} + (x-t) \frac{\partial}{\partial y} + (x-t) \frac{\partial}{\partial z} + (y+z) \frac{\partial}{\partial t} \right) .
    \label{eq:Mellip}
\end{equation}
If we suppose that the modular vector field $\M_\nu^\Pi$ is a Hamiltonian vector field, then $\H_\nu^\Pi$ is also Hamitonian. 

To show that $\W$ is not Hamiltonian we need to see that there is not a function $\mu \colon SL(2,\mathbb R) \to \mathbb R$ such that $\W=\Pi^\sharp _G(d \mu)$. A simple computation shows that
\begin{equation}
    \Pi^\sharp _G(d \mu)  (y+z) = - \frac{\p}{2} (y+z) \varphi(x,y,z,t) , \qquad \W(y+z) = 2 \p (x-t) , 
    \label{eq:ipiell1}
\end{equation}
and 
\begin{equation}
    \Pi^\sharp _G(d \mu)  (x-t) = - \frac{\p}{2} (x-t) \varphi(x,y,z,t) , \qquad \W(x-t) = - 2 \p (y+z) , 
    \label{eq:ipiell2}
\end{equation}
where 
\begin{equation}
    \varphi(x,y,z,t) := (x+t) \left( \frac{\partial \mu}{\partial z} - \frac{\partial \mu}{\partial y} \right) + (y-z) \left( \frac{\partial \mu}{\partial x} + \frac{\partial \mu}{\partial t} \right) .
\end{equation}
Now, from \eqref{eq:ipiell1} we have that 
\begin{equation}
    4 (x-t) = -\varphi(x,y,z,t) (y+z) 
\end{equation}
while from \eqref{eq:ipiell2} we obtain
\begin{equation}
    (x-t) \varphi(x,y,z,t) = 4 (y+z) .
\end{equation}
Therefore, these equations imply that $\varphi(x,y,z,t)^2 = -16$, so it cannot exist a Hamiltonian function $\mu$ for the vector field $\H_\nu^\Pi$ and, as a consequence, for the modular vector field  $\M_\nu^\Pi$. It is important to remark that this argument shows that none of the three Poisson quotients for the elliptic Poisson-Lie structure (second column from Table \ref{table:sl2PHS}) are unimodular as Poisson homogeneous spaces, not only that they are not multiplicative unimodular.

\subsubsection{Parabolic Poisson-Lie structure}

Finally, consider the parabolic Poisson-Lie structure defined on $SL(2,\mathbb R)$ by the classical $r$-matrix
\begin{equation}
r = \p J_{12} \wedge (P_1 + P_2) = \frac12 \p J_3 \wedge J_+ ,
\label{eq:rpar}
\end{equation}
where $\p \in \mathbb{R}-\{0\}$. In the basis \eqref{eq:sl2basis1}, the Lie bialgebra cocycle reads
\begin{equation}
\delta (J_{12}) = \p J_{12} \wedge (P_1 + P_2), \qquad \delta (P_1) = \p P_1 \wedge P_2, \qquad \delta (P_2) = - \p P_1 \wedge P_2 ,
\end{equation}
and the dual Lie algebra takes the form
\begin{equation}
[P^1,P^2]_{\mathfrak{g}^*}= \p (P^1 - P^2) , \qquad 
[P^1, J^{12}]_{\mathfrak{g}^*}=- \p J^{12}, \qquad 
[P^2,J^{12}]_{\mathfrak{g}^*}= - \p J^{12} .
\label{eq:dualpar1}
\end{equation}
Similarly, using the basis \eqref{eq:sl2basis2}, we obtain the Lie bialgebra structure 
\begin{equation}
\delta (J_3) = \p J_3 \wedge J_+, \qquad \delta (J_+) = 0 , \qquad \delta (J_-) = - \p J_+ \wedge J_- ,
\label{eq:dualpar2}
\end{equation}
and the dual Lie algebra
\begin{equation}
[J^3, J^+]_{\mathfrak{g}^*}= \p J^3, \qquad [J^3, J^-]_{\mathfrak{g}^*}= 0 ,\qquad [J^+,J^-]_{\mathfrak{g}^*}= - \p J^- .
\end{equation}
Similarly to the previous cases, simply using Theorem \ref{th:uni-perp}, we can directly check that the two coisotropic quotients, which in this case are the Anti-de Sitter space (two-sheeted hypeboloid) and the one-sheeted hyperboloid are not unimodular since $\chi_{\h^0} \neq 0$. Also, given that $d^\g (\lambda J^3 \wedge J^{-}) = 0$ and $\theta_0 = 0$ is the only admissible cocycle on $SL(2,\mathbb R)$, condition $ii)$ from Theorem \ref{th:semipreservediff} is not satisfied, and thus this last Poisson quotient $L_2$ (which is again the only Poisson quotient of Poisson subgroup type for this structure) is not multiplicative unimodular.

\paragraph{Not unimodular Poisson homogeneous spaces for the parabolic Poisson-Lie structure.} 

To complete our analysis, we prove that again there are no unimodular Poisson homogeneous spaces for the parabolic Poisson-Lie structure. 

We have that $\chi_{{\mathfrak sl}(2,\R)}=0$  and  $\chi_{{\mathfrak h}}=0$. Therefore, there is an invariant volume form $\nu$ on $G/H$ (see Corollary \ref{cor:formas-invariantes}).
 
We will show that the multiplicative vector field $\H_\nu^\Pi$   is not a Hamiltonian vector field. 

For the coboundary Poisson-Lie structure defined by \eqref{eq:rpar} the fundamental brackets are given by
\begin{equation}
\begin{array}{ll}
     \{x,y\} = \frac{\p}{2} \left( -x(x-t) -y z \right),  &\{x,z\} = \frac{\p}{2} z^2, \\ [10pt]
     \{x,t\} = - \frac{\p}{2} (x-t)z, &\{y,z\} = \frac{\p}{2} (x+t) z, \\ [10pt]
     \{y,t\} = \frac{\p}{2} \left( -t(x-t) +y z \right),  &\{z,t\} = -\frac{\p}{2} z^2 . 
     \label{eq:poispara}
\end{array}
\end{equation}
From \eqref{eq:dualpar1} or \eqref{eq:dualpar2} we have that 
\begin{equation}
    \chi_{\g^*} = -2 \p J_+ .
\end{equation}
Its left- and right-invariant vector fields are
\begin{equation}
    \lvec{\chi_{\g^*}} = -2 \p \left( x \frac{\partial}{\partial y} + z \frac{\partial}{\partial t} \right), \qquad
    \rvec{\chi_{\g^*}} = - 2 \p \left( z \frac{\partial}{\partial x} + t \frac{\partial}{\partial y} \right) ,
\end{equation}
and therefore the multiplicative vector field $\W = \frac{1}{2} (\rvec{\chi_{\g^*}} - \lvec{\chi_{\g^*}})$ reads
\begin{equation}
    \W = \p \left( - z \frac{\partial}{\partial x} + (x-t) \frac{\partial}{\partial y} +z \frac{\partial}{\partial t} \right) .
\end{equation}
To see that there is not a function $\mu \colon SL(2,\mathbb R) \to \mathbb R$ such that $\W=\Pi^\sharp _G(d \mu)$, we compute 
\begin{equation}
    \Pi^\sharp _G(d \mu)  (z) = \frac{\eta}{2} \varphi(x,y,z,t), \qquad \W(z) = 0,
\end{equation}
where we have written
\begin{equation}
    \varphi(x,y,z,t) := (x+t) \frac{\partial \mu}{\partial y} + z \left(\frac{\partial \mu}{\partial x} + \frac{\partial \mu}{\partial t}  \right) .
\end{equation}
Since $\Pi^\sharp _G(d \mu)  (z) = \W(z),$ we have that $\varphi(x,y,z,t) = 0$. However, we have that
 \begin{equation}
     \Pi_G^{\#}(d\mu) (x-t) = \frac{\eta}{2} (x-t) \varphi(x,y,z,t), \qquad \W(x-t) = -2 \p z ,
\end{equation}
and therefore $\Pi^\sharp _G(d \mu)  (x-t) \neq \W(x-t)$, so $\W$ cannot be a Hamiltonian vector field. 

Some last comments regarding our examples on semi-simple groups are in order. Firstly, we have shown that none of the Poisson quotients considered are unimodular. We have done so by performing the explicit computation at the Lie group level and showing that the multiplicative vector field $\W$ that projects to the modular vector field on the quotient cannot be Hamiltonian, since in every case contradictions appear when trying to solve associated the system of PDEs, and therefore no Hamiltonian function exists for $\W$. Secondly, quite remarkably, in all these examples Theorem \ref{th:uni-perp} allows to conclude that none of the the coisotropic quotients are unimodular, but it fails to detect this feature when the quotient is of Poisson subgroup type, and thus we need to use Theorem \ref{th:semipreservediff}, which indeed is sufficient to infinitesimally check that none of these Poisson quotients are multiplicative unimodular. Finally, we can end this Section by summing up these results in the following theorem

\begin{theorem}
    There are no unimodular coisotropic Poisson homogeneous spaces for the groups $SO(3)$ and $SL(2,\mathbb R)$.
    \label{th:semisimplenotunim}
\end{theorem}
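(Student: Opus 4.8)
The plan is to run through all coisotropic Poisson homogeneous spaces of $SO(3)$ and $SL(2,\R)$, which can be enumerated because both groups are semisimple: every Poisson--Lie structure on them is coboundary, hence given by an $r$-matrix, and the corresponding coisotropic Poisson quotients are exactly those collected in Tables~\ref{table:so3PHS} and~\ref{table:sl2PHS} --- the subgroup and coisotropic spheres for the single family of $r$-matrices on $SO(3)$, and $AdS_2$, $H_2\times\mathbb Z_2$, $L_2$ for each of the hyperbolic, elliptic and parabolic families on $SL(2,\R)$. For each of these I would rule out unimodularity by one of two complementary arguments, as carried out in Section~\ref{sec:examples}.

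First, whenever $\h^0$ is not unimodular --- equivalently $\chi_{\h^0}\neq0$, i.e. $H$ is a genuinely coisotropic (non Poisson--Lie) subgroup --- Theorem~\ref{th:uni-perp} applies at once and shows the quotient is not unimodular. Reading off the tables, this disposes of every quotient but one per $r$-matrix, namely the one of Poisson--Lie subgroup type (the subgroup sphere; $AdS_2$ for the hyperbolic structure; $H_2\times\mathbb Z_2$ for the elliptic one; $L_2$ for the parabolic one). For these the isotropy algebra is one-dimensional, so $\chi_\h=0$, and $\g$ is semisimple, so $\chi_\g=0$; thus $\chi_\g{}_{|\h}=\chi_\h$ and Corollary~\ref{cor:formas-invariantes} furnishes a $G$-invariant volume form $\nu$ with $q^*\nu=\lvec{\V}$, i.e. $\sigma=0$. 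With $\sigma=0$ and $\chi_\g=0$ the horizontal modular vector field \eqref{eq:clase-modular-PHS} collapses to $\W=\tfrac12(\rvec{\chi_{\g^*}}-\lvec{\chi_{\g^*}})$, and since $\chi_{\h^0}=0$ here, \eqref{eq:campo:simplificado} gives $\M_\nu^\Pi=q_*\W$. Hence, as established in the analysis of these same examples in Section~\ref{sec:examples}, $(G/H,\Pi)$ is unimodular if and only if $\W=\Pi_G^\sharp(d\mu)$ for some $q$-basic $\mu\in\mathcal C^\infty(G)$, so it is enough to show that $\W$ is not a Hamiltonian vector field of $\Pi_G$ at all.

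This last point is the crux of the matter. In each of the four remaining cases I would use the explicit realization of $G$ ($SU(2)$ as a double cover of $SO(3)$, matrix entries for $SL(2,\R)$), the coordinate Poisson brackets produced by the $r$-matrix, and the coordinate expressions of $\lvec{\chi_{\g^*}}$ and $\rvec{\chi_{\g^*}}$, and then test the equation $\Pi_G^\sharp(d\mu)(f)=\W(f)$ on a couple of well-chosen coordinate functions $f$: in every case the two resulting first-order equations force one and the same first-order differential operator applied to $\mu$ to take two different constant values (for the elliptic structure, they force the square of such an operator to be negative), so no $\mu$ can exist. The delicate part is precisely this: one has to pick the test functions $f$ so that the contradiction is manifest, and to be sure that the collision obstructs the full system $\W=\Pi_G^\sharp(d\mu)$ and not merely a convenient projection of it; the enumeration recorded in Tables~\ref{table:so3PHS}--\ref{table:sl2PHS} and the bookkeeping of left- and right-invariant vector fields in coordinates are routine. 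Combining the two arguments, none of the coisotropic Poisson homogeneous spaces of $SO(3)$ and $SL(2,\R)$ is unimodular.
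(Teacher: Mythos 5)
Your proposal is correct and follows essentially the same route as the paper: Theorem \ref{th:uni-perp} eliminates every quotient with $\chi_{\h^0}\neq 0$, and for the remaining Poisson--Lie-subgroup-type quotients one uses the invariant volume form (so $\sigma=0$, $\chi_\g=0$), reduces to showing $\H_\nu^\Pi=\tfrac12(\rvec{\chi_{\g^*}}-\lvec{\chi_{\g^*}})$ is not of the form $\Pi_G^\sharp(d\mu)$, and derives a contradiction from the coordinate expressions by testing on well-chosen functions. This is exactly the two-pronged argument of Section \ref{sec:examples}, including the observation that the global computation depends only on the $r$-matrix and hence rules out all quotients for that Poisson--Lie structure at once.
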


\section{Invariant volumen forms for Hamiltonian systems on coisotropic Poisson homogeneous spaces}\label{sec:preservation}

As we have previously mentioned, an arbitrary Poisson manifold is unimodular if and only if there exists a volume form which is preserved by all Hamiltonian vector fields (see, for instance, \cite{gutierrez2023unimodularity}). 

In the particular case of a connected Poisson-Lie group $(G,\Pi)$ with unimodular dual Lie algebra $\g^*$, from an arbitrary volume form on $G$, one can consider a new  volume form which  is preserved by all Hamiltonian vector fields. A converse of this result is given in \cite{gutierrez2023unimodularity} (see Theorem 3.7 in \cite{gutierrez2023unimodularity}) for a particular kind of Hamiltonian functions: the Morse functions at the identity element of the Poisson-Lie group.

In this section we will analyze the relation between the preservation of volume forms and the unimodularity of coisotropic Poisson homogeneous spaces. We will show that in this case the relationship between these two notions is not as simple as the one that exists in the Poisson-Lie group framework.

Let  $\nu$ be a volume form on a coisotropic Poisson homogeneous space $(G/H,\Pi)$ and $h:G/H\to \R$ a Hamiltonian function.  Then there is a function $\sigma\in C^\infty(G)$ and $\V\in \wedge^{m-n}\h^0$ with $\V\not=0$ satisfying 
$$
    q^*\nu =e^\sigma \lvec{\V}
$$
and the conditions $i)$ and $ii)$ given in  Theorem \ref{thm:volume-form}. 

Now, let $\tilde{\nu}$ be a volume form on $G/H$ which is preserved. Then, there is a function $\tau:G/H\to \R$ such that $\tilde\nu=e^\tau\nu$, and the preservation of the volume forms implies 
    \begin{equation}\label{preservation}
    0= \mathcal L_{X_h^{\Pi}} (e^{\tau} \nu) = e^{\tau} (X_h^{\Pi} (\tau) + \M_\nu^\Pi (h) ) \nu = e^{\tau} (-X_\tau^{\Pi} (h) + \M_\nu^\Pi (h) ) \nu,
    \end{equation}
where $\M_\nu^\Pi$ is the modular vector field of $\Pi$ with respect to $\nu.$ 
Then, using \eqref{eq:campo:simplificado}, Proposition \ref{cois} and that $q^*$ is an injective morphism, we deduce the following result 
\begin{proposition}\label{pro:pres}
    Let $(G/H,\Pi)$ be a coisotropic Poisson homogeneous space and $h:G/H\to \R$  a Hamiltonian function. Then, the
Hamiltonian vector field $X_h^\Pi$  preserves a volume form on $G/H$ if and only if there exist a function $\tau:G/H\to \R$ and a function $\sigma:G\to \R$ such that 
\begin{equation}\label{preH}
 X_{h\circ q}^{\Pi_G}(\sigma+\tau\circ q)  +\frac12 \left( \rvec{\chi_{\g^*}}(h\circ q) -\lvec{\chi_{\g^*}}(h\circ q)   - \rvec{\chi_\g}(X^{\Pi_G}_{h\circ q})\right)=0 .
\end{equation}
In such a case, the preserved volume form $\tilde{\nu}$ satisfies   
$$q^*(\tilde\nu)=e^{(\sigma + \tau\circ q)}\lvec{\V},$$
with  $\V\in \wedge^{m-n}\h^0$ such that 
 the conditions $i)$ and $ii)$ (for $\sigma$ and $\V$) from  Theorem \ref{thm:volume-form} hold.
\label{prop:presvol}  
\end{proposition}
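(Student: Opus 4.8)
The plan is to transport the preservation condition to the Lie group $G$ by means of the projection $q$, which is a Poisson epimorphism because $(G/H,\Pi)$ is a coisotropic Poisson quotient (Proposition \ref{cois}). First I would fix, using Theorem \ref{thm:volume-form}, a volume form $\nu$ on $G/H$ with $q^*\nu=e^\sigma\lvec{\V}$, $\V\in\wedge^{m-n}\h^0$, $\V\neq 0$, so that an arbitrary volume form on $G/H$ is $\tilde\nu=e^\tau\nu$ with $\tau\in C^\infty(G/H)$; hence $X_h^\Pi$ preserves a volume form if and only if $\mathcal L_{X_h^\Pi}(e^\tau\nu)=0$ for some $\tau$. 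Expanding the Lie derivative, using the definition \eqref{14'}--\eqref{eq:modular-class} of the modular vector field and the identity $X_h^\Pi(\tau)=-X_\tau^\Pi(h)$ (skew-symmetry of $\Pi$), this vanishes exactly when
\[
\M_\nu^\Pi(h)=X_\tau^\Pi(h) \quad\text{on } G/H .
\]

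Next I would pull this scalar identity back to $G$ and use injectivity of $q^*$. Since $q$ is Poisson, $X_{h\circ q}^{\Pi_G}$ is $q$-projectable onto $X_h^\Pi$ and $X_{\tau\circ q}^{\Pi_G}$ onto $X_\tau^\Pi$, whence $(X_\tau^\Pi(h))\circ q=X_{\tau\circ q}^{\Pi_G}(h\circ q)=-X_{h\circ q}^{\Pi_G}(\tau\circ q)$; and Proposition \ref{prop:modular-class}, together with \eqref{eq:campo:simplificado}, gives $\M_\nu^\Pi=q_*\W$ for the horizontal modular vector field $\W$ of \eqref{eq:clase-modular-PHS}, so $\M_\nu^\Pi(h)$ pulls back to $\W(h\circ q)$. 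The identity above thus becomes $\W(h\circ q)+X_{h\circ q}^{\Pi_G}(\tau\circ q)=0$ on $G$. To put this in the form \eqref{preH} I would rewrite the pairings in $\W(h\circ q)$ using the skew-symmetry of $\Pi_G$, i.e. $\langle\omega,\Pi_G^\sharp(df)\rangle=-\Pi_G^\sharp(\omega)(f)$ for a $1$-form $\omega$ and a function $f$ on $G$: this converts $-\Pi_G^\sharp(d\sigma)(h\circ q)$ into $X_{h\circ q}^{\Pi_G}(\sigma)$ and $\Pi_G^\sharp(\rvec{\chi_\g})(h\circ q)$ into $-\rvec{\chi_\g}(X_{h\circ q}^{\Pi_G})$; collecting the two Hamiltonian terms into $X_{h\circ q}^{\Pi_G}(\sigma+\tau\circ q)$ yields exactly \eqref{preH}. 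The converse is obtained by running these equivalences backwards: starting from $\sigma$ (compatible in the sense of i)--ii) of Theorem \ref{thm:volume-form}, which by Lemma \ref{lem:lema0} admits a matching $\V$) and $\tau$ satisfying \eqref{preH}, one builds $\nu$ with $q^*\nu=e^\sigma\lvec{\V}$ and recovers $\mathcal L_{X_h^\Pi}(e^\tau\nu)=0$, so $\tilde\nu=e^\tau\nu$ is the claimed preserved volume form with $q^*\tilde\nu=e^{\sigma+\tau\circ q}\lvec{\V}$.

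The step I expect to be most delicate is the reduction through $q$ in the second paragraph: the general modular-class formula \eqref{eq:proyeccion} carries the additional left-invariant term $\lvec{x_{\h^0}}$, and passing to the simplified form \eqref{eq:campo:simplificado} used above requires keeping track of the contribution $\lvec{x_{\h^0}}(h\circ q)$ and checking how it enters (or drops out of) the final identity \eqref{preH}. A second, more routine point is the converse direction, where one has to ensure that the function $\sigma$ coming from \eqref{preH} is indeed attached, via Theorem \ref{thm:volume-form}, to a genuine volume form so that $e^\tau\nu$ makes sense. Everything else --- the Lie-derivative expansion, the projectability of Hamiltonian vector fields along the Poisson map $q$, and the skew-symmetry rewriting --- is straightforward bookkeeping.
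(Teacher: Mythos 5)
Your proposal is correct and follows essentially the same route as the paper: the authors also write $\tilde\nu=e^\tau\nu$ with $q^*\nu=e^\sigma\lvec{\V}$, expand $\mathcal L_{X_h^\Pi}(e^\tau\nu)=e^\tau\bigl(-X_\tau^\Pi(h)+\M_\nu^\Pi(h)\bigr)\nu$, and then pull back to $G$ using \eqref{eq:campo:simplificado}, Proposition \ref{cois} and the injectivity of $q^*$, with the same skew-symmetry rewrites producing \eqref{preH}. The delicate point you flag --- that the general formula \eqref{eq:proyeccion} carries the extra term $\lvec{x_{\h^0}}$ and that the paper passes to the simplified form \eqref{eq:campo:simplificado} without an explicit unimodularity hypothesis on $\h^0$ --- is indeed glossed over in the paper's own argument, so your attention to it is warranted.
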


\begin{example}
    Consider the Poisson quotient $H_2 \times \mathbb Z_2$ corresponding to the elliptic Poisson-Lie structure on $SL(2,\mathbb R)$ (second row and second column from Table \ref{table:sl2PHS}). Consider the Hamiltonian function $\hat h: SL(2,\R)\to \R$ given by 
    \begin{equation}
        \hat h = \frac{1}{2} \langle A,A \rangle = \frac{1}{2} \mathrm{Tr} (A^t A) = \frac{1}{2}(x^2+y^2+z^2+t^2).
    \end{equation}
    We have that $\hat h$ is $q$-basic since 
    \begin{equation}
        \lvec{P_1} (\hat h) = \frac{1}{2} \bigg( y \frac{\partial \hat h}{\partial x} - x \frac{\partial \hat h}{\partial y} + t \frac{\partial \hat h}{\partial z} - z \frac{\partial \hat h}{\partial t} \bigg) = 0 .
    \end{equation}

    We recall that $\chi_\g=0$ and that, since $SL(2,\R)$ has an invariant volume form, the function $\sigma:G\to \R$ on $SL(2,\R)$ in the equation \eqref{preH} is the zero function. 
    
    On the other hand, using \eqref{eq:elip}, we deduce 
    $$\rvec{\chi_{\g^*}}(\hat h) -\lvec{\chi_{\g^*}}(\hat h)=0.$$
     Therefore,  from Proposition \ref{pro:pres},  we have that the Hamiltonian vector field $X^\Pi_h$ preserves the (invariant) volume form $\nu$, where $\widehat{h}=h\circ q$ and $q:SL(2,\R)\to H_2\times {\Bbb Z}_2$ is the canonical projection.
\end{example}

Next, we will analyze the implications of the existence of a volume form which is preserved by the Hamiltonian vector field for a particular kind of Hamiltonian functions: the $H$-Morse functions at the identity element of the Poisson homogeneous space.     

\begin{definition}
Let $G/H$ be a homogeneous space. A function $h\in C^\infty(G/H)$ is said to be $H$-Morse at $eH$ if 
\begin{itemize}
\item[i)] $eH$ is a singular point of $h$, that is, $d h (eH)=0$.
\item[ii)] $(\textrm{Hess } h)(eH)$ is nondegenerate, with $(\textrm{Hess } h)(eH) \colon \g/\h \times \g/\h \to \R$  the
Hessian of $h$ at $eH$, i.e., the symmetric bilinear form on $\g/\h$ given by
\begin{equation}\label{eq:Hessian}
(\textrm{Hess } h)_{eH}(X ,Y )= X (V^{Y }(h) ), \qquad X,Y \in \g/\h\cong T_{eH}(G/H),
\end{equation}
where $V^{Y}\in \mathfrak{X}(G/H)$ is an arbitrary vector field such that $V^{Y}(eH)=Y$.
\end{itemize}
\end{definition}


Note that in the particular case when $H=\{e\}$, that is, the homogeneous space is the Lie group $G,$ we recover the definition of a Morse function in $G$ at $e$ (see Definition 3.5 in \cite{gutierrez2023unimodularity}). 
\begin{theorem}
    Let $(G/H,\Pi)$ be a coisotropic Poisson homogeneous space and $h \in \mathcal{C}^\infty (G/H)$ a $H$-Morse function at $eH$. If there exists a volume form $\nu$ on $G/H$ which is preserved by $X_h^{\Pi}$, i.e. $\mathcal L_{X_h^{\Pi}}\; \nu=0$, then  ${\h^0}$ is unimodular. 
    \label{th:morseh0}
\end{theorem}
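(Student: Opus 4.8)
The plan is to reduce the preservation of a volume form by $X_h^\Pi$ to an equation at the point $eH$, exploiting the fact that $\Pi(eH)=0$ (Proposition \ref{cois}) together with the $H$-Morse hypothesis on $h$ at $eH$. First I would invoke Proposition \ref{prop:presvol}: if $X_h^\Pi$ preserves a volume form, then there are functions $\sigma\colon G\to\R$ and $\tau\colon G/H\to\R$ such that equation \eqref{preH} holds identically on $G$. Setting $\rho=\sigma+\tau\circ q\in C^\infty(G)$, this reads
\[
X_{h\circ q}^{\Pi_G}(\rho) + \tfrac12\bigl(\rvec{\chi_{\g^*}}(h\circ q) - \lvec{\chi_{\g^*}}(h\circ q) - \rvec{\chi_\g}(X_{h\circ q}^{\Pi_G})\bigr)=0,
\]
and the strategy is to evaluate this equation (and suitable derivatives of it) at $e\in G$, where $\Pi_G(e)=0$ kills the Hamiltonian vector field to first order.

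Next I would extract information at $e$. Since $X_{h\circ q}^{\Pi_G}(e)=0$ (because $\Pi_G(e)=0$) and $\rvec{\chi_\g}=\lvec{\chi_\g}$ is $\Pi_G^\sharp$-paired against this vanishing vector field, the term $\rvec{\chi_\g}(X_{h\circ q}^{\Pi_G})$ vanishes at $e$; also $\rvec{\chi_{\g^*}}(e)=\lvec{\chi_{\g^*}}(e)=\chi_{\g^*}$, so the whole bracketed term vanishes at $e$, giving $X_{h\circ q}^{\Pi_G}(\rho)(e)=0$, which is automatic. The real content must come from differentiating \eqref{preH} along left-invariant vector fields $\lvec{X}$, $X\in\g$, and evaluating at $e$. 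Using that $\mathcal L_{\lvec{X}}\Pi_G(e)=\lvec{\delta_{\Pi_G}(X)}(e)=\delta_{\Pi_G}(X)$, the derivative of $X_{h\circ q}^{\Pi_G}$ at $e$ in the direction $\lvec{X}$ is $i_{d(h\circ q)(e)}\delta_{\Pi_G}(X)$. Here the $H$-Morse hypothesis enters: $d(h\circ q)(e)=q^*(dh(eH))=0$ since $eH$ is a critical point of $h$, so even these first derivatives of the Hamiltonian terms vanish at $e$. One therefore needs to go to \emph{second} order, i.e. apply $\lvec{X}\lvec{Y}$ to \eqref{preH} and evaluate at $e$; the Hessian of $h\circ q$ at $e$ (equivalently the Hessian of $h$ at $eH$ on $\g/\h$, which is nondegenerate) will survive, and the term $\lvec{x_{\h^0}}$ — equivalently $\chi_{\h^0}$ — should appear paired against this Hessian.

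Concretely, I expect that the second-order jet of \eqref{preH} at $e$ forces a relation of the form $\langle \chi_{\h^0}, (\text{something built from }\mathrm{Hess}\,h(eH))\rangle = 0$ for a range of $X,Y$, and nondegeneracy of the Hessian lets one conclude $\chi_{\h^0}=0$, i.e. $\h^0$ is unimodular — mirroring the argument in Theorem \ref{th:uni-perp} where $\M_\nu^\Pi(eH)=q_*(x_{\h^0})$ and the vanishing of a Hamiltonian vector field at $eH$ forced $x_{\h^0}\in\h$. In fact a cleaner route may be to combine \eqref{preservation} directly with Proposition \ref{prop:modular-class}: preservation says $\M_\nu^\Pi(h)=X_\tau^\Pi(h)$ as functions on $G/H$, and since $\M_\nu^\Pi(eH)=q_*(x_{\h^0})$ while $X_\tau^\Pi$ vanishes to first order at $eH$ and $dh(eH)=0$, evaluating $\M_\nu^\Pi(h)-X_\tau^\Pi(h)$ and its derivatives at $eH$ should isolate $q_*(x_{\h^0})$ acting through the Hessian of $h$; nondegeneracy then gives $q_*(x_{\h^0})=0$, hence $\chi_{\h^0}=0$. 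The main obstacle will be the bookkeeping in the second-order Taylor expansion at $eH$: one must carefully separate the contribution of $X_\tau^\Pi(h)$ (which, although vanishing at $eH$, has nonzero derivative governed by $\Pi$ and $d\tau(eH)$) from that of $\M_\nu^\Pi(h)$, and show that the $\tau$-dependent and $\sigma$-dependent pieces cannot conspire to cancel the $\chi_{\h^0}$-term against the full nondegenerate Hessian — this is exactly the place where the $H$-Morse condition (rather than mere existence of a critical point) is essential, just as the nondegeneracy hypothesis was essential in Theorem 3.7 of \cite{gutierrez2023unimodularity}.
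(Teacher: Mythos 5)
Your proposal is essentially correct, and the ``cleaner route'' you sketch at the end is exactly the paper's argument: preservation of $\nu$ gives $\M_\nu^\Pi(h)\equiv 0$ by \eqref{14'}--\eqref{eq:modular-class}, so for every $Y\in\g/\h$ one has $(\mathrm{Hess}\,h)_{eH}(Y,\M_\nu^\Pi(eH))=Y(\M_\nu^\Pi(h))=0$; nondegeneracy forces $\M_\nu^\Pi(eH)=0$, and since $\Pi_G(e)=0$ kills every term of \eqref{eq:proyeccion} except $q_*(x_{\h^0})$, one gets $x_{\h^0}\in\h$ and hence $\chi_{\h^0}=0$. Two remarks on your primary route. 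First, it is over-engineered: you do not need second-order jets of the preservation equation. All the terms involving $\Pi_G^\sharp$ and $\rvec{\chi_{\g^*}}-\lvec{\chi_{\g^*}}$ vanish to \emph{second} order at $e$ (each is a product of two factors vanishing at $e$), whereas the term $2\lvec{x_{\h^0}}(h\circ q)$ vanishes only to first order and its first derivative at $e$ is already $(\mathrm{Hess}(h\circ q))_e(\cdot\,,x_{\h^0})$, i.e.\ the Hessian of $h$ paired with $q_*(x_{\h^0})$; so the conclusion drops out at first order. Second, be careful that equation \eqref{preH} of Proposition \ref{prop:presvol} is written using the simplified formula \eqref{eq:campo:simplificado} and therefore does not display the $\lvec{x_{\h^0}}$ term at all; if you start from \eqref{preH} verbatim there is nothing to extract $\chi_{\h^0}$ from. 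You must instead work with the full expression \eqref{eq:proyeccion} of Proposition \ref{prop:modular-class} (as the paper does, and as your cleaner route implicitly does). With that correction both of your routes close, and neither is genuinely different from the paper's proof.
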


\begin{proof}
    From the preservation of $\nu$, and using (\ref{14'}) and (\ref{eq:modular-class}),
    we deduce  that $\M_\nu^\Pi (h)=0.$
    
    From the definition of the Hessian, for any $Y\in \g/\h\cong T_e(G/H)$, we obtain that
    \begin{equation}
    \begin{split}
    (\textrm{Hess } h)_{eH} (Y, \M_\nu^\Pi (eH) ) &
    = Y(\M_\nu^\Pi (h))= 0.
    \end{split}
    \end{equation}
     Using now that $h$ is $H$-Morse at $eH$ (and therefore its Hessian is non-degenerate at $eH$), we obtain that $\M_\nu^\Pi (eH) = 0$. From equation \eqref{eq:proyeccion} and since $\Pi_G(e)=0,$ we know that $\M_\nu^\Pi (eH) = q_* (x_{\h^0})$ and thus $x_{\h^0} \in \h.$ But, $\chi_{\h^0}$ is the projection of $x_{\h^0}$ via the dual map $\iota^*:\g\to (\h^0)^*$ of the inclusion $\iota: \h^0\to \g^*.$ Thus,  we conclude that the condition  $x_{\h^0} \in \h$ implies $\chi_{\h^0} = 0$, i.e. $\h^0$ is unimodular.
\end{proof}

In the particular case when $H=\{e\}$ (that is, the Poisson homogeneous space $G/H$ is the Poisson-Lie group $G$ ), we deduce that $\h^0=\g^*$ is unimodular. So, $G$ is unimodular  and we recover Theorem 3.7 in  \cite{gutierrez2023unimodularity}. For the more general case of a Poisson homogeneous space, the situation is more complicated. In fact, we will present  examples that show that the existence of a $H$-Morse function on a coisotropic Poisson homogeneous space, with $\h^0$ unimodular, does not imply that the Poisson structure on the quotient is unimodular.

\begin{example}\label{Ex:6.5}
Recall the `subgroup sphere' described in Table \ref{table:so3PHS}. As we have proved above, this Poisson homogeneous space is not unimodular although $\mathfrak h^0$ is unimodular. 

Consider the function
\begin{equation}
    \begin{split}
        \hat h : SO(3) &\to \R \\
        (x,y,z,t) &\to (x^2+y^2)(z^2+t^2),
    \end{split} 
\end{equation}
where $x,y,z,t$ are coordinate functions on $SO(3)$ as in (\ref{eq:SU2coords}). In particular, the identity element is $e=(1,0,0,0).$ Moreover, $\widehat{h}$ 
 is $q$-basic since
\begin{equation}
    \lvec{J_3} (\hat h) = \frac{1}{2} \bigg(- y \frac{\partial \hat h}{\partial x} + x \frac{\partial \hat h}{\partial y} - t \frac{\partial \hat h}{\partial z} + z \frac{\partial \hat h}{\partial t} \bigg) = 0 ,
\end{equation}
and therefore, it induces the function on the quotient $ h : S^2 \simeq SO(3)/S^1 \to \R$, with $ h \circ q = \hat h$. 

The Hamiltonian vector field of 
$h$ preserves a volume form. In fact, from \eqref{lxg*} and that $\g$ is unimodular, we have 

\begin{equation}\label{preH2}
   \rvec{\chi_{\g^*}}(h\circ q) -\lvec{\chi_{\g^*}}(h\circ q)   - \rvec{\chi_\g}(X^{\Pi_G}_{h\circ q})=\eta \bigg( - t \frac{\partial \hat h}{\partial z} + z \frac{\partial \hat h}{\partial t} \bigg)=0. 
\end{equation}
Since $SO(3)$  is semisimple,  there are not non-trivial 1-cocycles and therefore $\sigma=0$ (see Remark \ref{rem-2.7}). Taking $\tau=0$ in \eqref{preH} and using \eqref{preH2}, we deduce that $X_h^\Pi$ preserves a volumen form on $G/H$. 

Moreover, $h$ is $H$-Morse since 
\begin{enumerate}
    \item[i)] $d \hat h = 2 ((z^2+t^2)(xdx+ydy)+(x^2+y^2)(zdz+tdt))$, and then $d \hat h (e)=0$. Thus, $q^*(d {h} (eH))=0$. Since $q^*$ is injective, then $d {h} (eH)=0.$
    \item[ii)] Taking into account that $T_{eH} (SO(3)/S^1) \simeq \langle J_1,J_2 \rangle$, we have that
    \begin{equation}
        \begin{split}
            (\mathrm{Hess} \;  h)_{eH} (J_1,J_1) &= 1/2, \\
            (\mathrm{Hess} \;  h)_{eH}(J_1,J_2) &= (\mathrm{Hess} \;  h)_{eH} (J_2,J_1) = 0, \\
            (\mathrm{Hess} \;  h)_{eH} (J_2,J_2) &= 1/2, \\
        \end{split}
    \end{equation}
    i.e.,
    \begin{equation}
        [(\mathrm{Hess } \;  h)_{eH}]_{\{J_1,J_2\}} = 
        \begin{pmatrix}
            1/2 & 0 \\
            0 & 1/2 \\
        \end{pmatrix}
    \end{equation}
    and the Hessian is not degenerate at $eH$. 
\end{enumerate}
\end{example}

Next, we will present a higher dimensional example that shows that, even when the dynamical system is completely integrable, the preservation of a global volume form is by no means  guaranteed. 
\begin{example}\label{Ex:6.6}
    Consider the Lie group 
    \begin{equation}
        G = SL (n,\mathbb R) = \{ A \in \mathrm{GL} (n, \mathbb R) \, | \, \det A = 1 \}
    \end{equation}
    and the Poisson-Lie group structure given by the following fundamental brackets 
    \begin{equation}
        \{a_{ij},a_{kl} \} = ((-1)^{(i-k)} - (-1)^{(l-j)} )a_{il} a_{kj} . 
    \end{equation}
    This Poisson-Lie group structure is the one associated to the standard or Drinfel'd-Jimbo $r$-matrix (see \cite{fernandes1994completely} and references therein). In order to describe the dual Lie algebra, let us introduce the non-degenerate bilinear form $\langle \cdot , \cdot \rangle : \mathfrak{sl}(n,\mathbb R) \times \mathfrak{sl}(n,\mathbb R) \to \mathbb R$ given by $\langle A , B \rangle = \Tr (A B)$. This bilinear form allows us to identify $\mathfrak{sl}(n,\mathbb R)^* \simeq \mathfrak{sl}(n,\mathbb R)$ as vector spaces, and the dual Lie algebra can therefore be described as a second Lie bracket on $\mathfrak{sl}(n,\mathbb R)$ given by $[A,B]_* = [R A,B] + [A,R B]$, where $R : \mathfrak{sl}(n,\mathbb R) \to \mathfrak{sl}(n,\mathbb R)$ is the following map
    \begin{equation}
    R A = 
        \begin{cases}
            - A &\text{ if } \quad A \in \mathfrak t_+ \\
            0 &\text{ if } \quad A \in \mathfrak d \\
            A &\text{ if } \quad A \in \mathfrak t_- \\
        \end{cases} ,
        \label{eq:Rsln}
    \end{equation}
    where $\mathfrak d, \mathfrak t_+, \mathfrak t_-$ are the sets of diagonal, upper and lower triangular matrices with zero trace, respectively.

Consider now the closed subgroup $H\kern-3pt=\kern-3pt SO(n,\mathbb R)$ and the homogeneous space $G/H\kern-3pt=\kern-3pt SL (n,\mathbb R)/SO(n,\mathbb R)$. Note that for $n=2$, this quotient corresponds to the one presented in the second row, first column of Table \ref{table:sl2PHS}. In order to describe this quotient, we decompose the semisimple Lie algebra $\mathfrak{sl}(n,\mathbb R)$ as $\mathfrak{sl}(n,\mathbb R) = \mathfrak d \oplus \mathfrak s \oplus \mathfrak q$ (as a vector space), where $\mathfrak d$ are diagonal matrices with zero trace, while $\mathfrak s, \mathfrak q$ are symmetric and skew-symmetric matrices with zeroes at the main diagonal entries. We have that $\dim \mathfrak d = n-1$, $\dim \mathfrak s = \frac{n(n-1)}{2}$ and $\dim \mathfrak q = \frac{n(n-1)}{2}$. Consider the basis of $\mathfrak{sl}(n,\mathbb R)$, adapted to the quotient above, given by 
    \begin{equation*}
    \begin{split}
    D_{i} &:= E_{ii}-E_{i+1,i+1}, \qquad 1 \leq i \leq n -1, \\
    S_{ij} &:= E_{ij}+E_{ji}, \qquad 1 \leq i < j \leq n, \\
    Q_{ij} &:= E_{ij}-E_{ji}, \qquad 1 \leq i < j \leq n, \\
    \end{split}
    \end{equation*}
    where $E_{ij}$ is the $n \times n$ matrix with entries 1 in the position $i,j$ and 0 elsewhere. In terms of this basis, the commutation relations of $\mathfrak{sl}(n,\mathbb R)$ read
    \begin{equation*}
    \begin{split}
    [Q_{ij},Q_{kl}] &= \delta_{il} Q_{jk} + \delta_{jk} Q_{il} - \delta_{jl} Q_{ik} - \delta_{ik} Q_{jl}, \\
    [Q_{ij},S_{kl}] &= -\delta_{il} S_{jk} + \delta_{jk} S_{il} + \delta_{jl} S_{ik} - \delta_{ik} S_{jl}, \\
    [S_{ij},S_{kl}] &= \delta_{il} Q_{jk} + \delta_{jk} Q_{il} + \delta_{jl} Q_{ik} + \delta_{ik} Q_{jl}, \\
    [D_k,Q_{ij}] &= (\lambda_i - \lambda_j) S_{ij}, \\
    [D_k,S_{ij}] &= (\lambda_i - \lambda_j) Q_{ij}, \\
    [D_i,D_j] &= 0, \\
    \end{split}
    \end{equation*}
    where $\lambda_i$ are defined, for any diagonal matrix $D=(d_1,\ldots,d_n)$, by $\lambda_i (D) = \lambda_i (d_1,\ldots,d_n) = d_i$. Note that $\h = \mathfrak{so}(n, \mathbb R) = \langle Q_{ij} \rangle$.

    Since both $SL (n,\mathbb R)$ and $SO(n,\mathbb R)$ are unimodular Lie groups, we know that there exist invariant volume forms on $SL (n,\mathbb R)/SO(n,\mathbb R)$ and they cannot be semi-invariant (see Proposition \ref{cor:caract:semi-invariante} and Corollary \ref{cor:formas-invariantes}).

    In terms of this basis, the map \eqref{eq:Rsln} reads 
    \begin{equation}
    R (Q_{ij}) = -S_{ij}, \qquad R (S_{ij}) = -Q_{ij}, \qquad R (D_{i}) = 0 
    \end{equation}
    and the dual Lie algebra is given by
    \begin{equation*}
    \begin{split}
    [Q_{ij},Q_{kl}]_* &= -[S_{ij},Q_{kl}] - [Q_{ij},S_{kl}] = 2(\delta_{il} S_{jk} - \delta_{jk} S_{il}), \\
    [S_{ij},Q_{kl}]_* &= -[Q_{ij},Q_{kl}] - [S_{ij},S_kl] = 2(\delta_{il} Q_{kj} - \delta_{jk} Q_{il}), \\
    [S_{ij},S_{kl}]_* &= -[Q_{ij},S_{kl}] - [S_{ij},Q_{kl}] = 2 (\delta_{il} S_{kj} - \delta_{jk} S_{il}), \\
    [D_k,Q_{ij}]_* &= -[D,S_{ij}] = -(\lambda_i - \lambda_j) Q_{ij}, \\
    [D_k,S_{ij}]_* &= - [D,Q_{ij}]= - (\lambda_i - \lambda_j) S_{ij}, \\
    [D_i,D_j]_* &= 0. \\    
    \end{split}
    \end{equation*}
    Note that $T_e(SL (n,\mathbb R)/SO(n,\mathbb R)) \simeq \h^0 \simeq \mathfrak{so}(n, \mathbb R)^0 \simeq \langle D_k, S_{ij} \rangle$.
    
    From these commutation relations, we can see that $\chi_{\h^0} = - 2 \displaystyle\sum_{k=1}^{n-1} D_k = -2 (E_{11}-E_{nn})$, so the Poisson quotient $(SL (n,\mathbb R)/SO(n,\mathbb R), \Pi)$ is not unimodular by Theorem \ref{th:h0unim}. Moreover, we have that 
    \begin{equation*}
        \chi_{\g^*} = - 4 \sum_{k=1}^{n-1} D_k = -4 (E_{11}-E_{nn}) .
    \end{equation*}
    From this expression we can easily compute
    \begin{equation*}
    \lvec{\chi_{\g^*}} = -4 \sum_{i=1}^n \bigg( a^{i1} \frac{\partial}{\partial a^{i1}} - a^{in} \frac{\partial}{\partial a^{in}} \bigg), \qquad 
    \rvec{\chi_{\g^*}} = -4 \sum_{i=1}^n \bigg( a^{1i} \frac{\partial}{\partial a^{1i}} - a^{ni} \frac{\partial}{\partial a^{ni}} \bigg),
    \end{equation*}
    and therefore the horizontal modular class of $(SL (n,\mathbb R)/SO(n,\mathbb R), \Pi)$ associated to an invariant volume form $\nu$ is given by
    \begin{equation*}
    \W = \frac{1}{2} (\rvec{\chi_{\g^*}} - \lvec{\chi_{\g^*}}) = 4 \bigg( a^{1n} \frac{\partial}{\partial a^{1n}} - a^{n1} \frac{\partial}{\partial a^{n1}} \bigg)
    -2 \sum_{i=2}^{n-1} 
    \bigg( a^{1i} \frac{\partial}{\partial a^{1i}} - a^{i1} \frac{\partial}{\partial a^{i1}} - a^{ni} \frac{\partial}{\partial a^{ni}} + a^{in} \frac{\partial}{\partial a^{in}} \bigg) .
    \end{equation*}
    The Hamiltonian function
    \begin{equation}
        \begin{split}
           \hat h : SL (n,\mathbb R) &\to \mathbb R \\
            A\qquad  &\to \mathrm{Tr}(AA^T)= \sum_{1\leq i,j \leq n} a_{ij}^2
        \end{split}
        \label{eq:todah}
    \end{equation}
    on the Poisson-Lie group, defines the dynamics of the Toda lattice, a well-known completely integrable system. The Hamiltonian vector field reads
    \begin{equation*}
        X_{\hat h}^{\Pi_G} = \sum_{1 \leq i,j,k,l \leq n} ((-1)^{i-k} - (-1)^{l-j}) a_{ij} a_{il} a_{kj} \frac{\partial}{\partial a_{kl}} .
    \end{equation*}

    The function $\hat{h}$ induces a Hamiltonian function $h:SL(n,\R)/SO(n)\to \R$ on the Poisson homogeneous space such that $\hat{h}=h\circ q.$ Note that 
    $$\hat{h}(SO(n,\R))=\hat{h}(\mathrm{Id}_{n,n}).$$

    
    In order to see that the vector field $X_{h}^{\Pi}$ does not preserve any volume form, consider the point 
    \begin{equation*}
        g = \begin{pmatrix}
            0 & a & 0 \\
            -\frac{1}{a} & 0 & 0 \\
            0 & 0 & \mathrm{Id}_{n-2,n-2} \\
        \end{pmatrix} \in SL (n,\mathbb R),
    \end{equation*}
    with $a \in \mathbb R - \{0\}$,
    and write 
    \begin{equation*}
    \begin{split}
        X_{\hat h}^{\Pi_G} (g) &= \sum_{1 \leq j,k,l \leq n} ((-1)^{1-k} - (-1)^{l-j}) a_{1j} a_{1l} a_{kj} \\
        &+ \sum_{1 \leq j,k,l \leq n} ((-1)^{2-k} - (-1)^{l-j}) a_{2j} a_{2l} a_{kj} \\
        &+ \sum_{\substack{i \geq 3 \\ 1 \leq j,k,l \leq n}} ((-1)^{i-k} - (-1)^{l-j}) a_{ij} a_{1l} a_{kj} \\
        &= ((-1)^{1-1} - (-1)^{2-2}) a^3 - ((-1)^{2-2} - (-1)^{1-1}) \frac{1}{a^3} +  \sum_{i \geq 3} (-1)^{i-i} - (-1)^{i-i}) a_{ii}^3 = 0,
    \end{split}
    \end{equation*}    
    and thus $g$ is a singular point of $X_{\hat h}^{\Pi_G}.$ 
    
    Moreover, we have that
    \begin{equation*}
        \W (h) = 4 \bigg(2 \big((a^{1n})^2 - (a^{n1})^2 \big) - \sum_{i=2}^{n-1} (a^{1i})^2 - (a^{i1})^2 - (a^{ni})^2 + (a^{in})^2 \bigg) ,
    \end{equation*}
    and thus 
    \begin{equation*}
        (\W (h)) (g) = -\frac{4}{a^2} (a^2+1)(a+1)(a-1) ,
    \end{equation*}
    so, for any $a \not \in \{-1,1\}$, $(\W (h)) (g) \neq 0.$ Therefore, the equality \eqref{preH} is not possible for any function $\tau$ on $SL(n, \mathbb R)/SO(n, \mathbb R).$
    
    In conclusion, we have that the Toda lattice Hamiltonian \eqref{eq:todah} (which is a well-known completely integrable system) projects on  $SL(n, \mathbb R)/SO(n, \mathbb R).$ Its projection is the Hamiltonian vector field $X_{\hat h}^{\Pi_G}$ (which is a direct consequence of Proposition \ref{cois}) but, from Proposition \ref{prop:presvol}, this last dynamics does not preserve any volume form on the Poisson homogeneous space $(SL (n,\mathbb R)/SO(n,\mathbb R),\Pi)$.
    
\end{example}




In Examples \ref{ex:onlysemi1} and \ref{ex:onlysemi2}, we presented the infinitesimal description of a Poisson homogeneous space which admits semi-invariant but not invariant volume forms and, in addition, the Poisson structure is multiplicative unimodular. 

Next, we will give a global description of the Poisson homogeneous space and, moreover, we will see that a compartmental epidemiological  model (see \cite{BBG2020physicaD}) is Hamiltonian with respect to this Poisson structure. Then, it is clear that the dynamics preserves a semi-invariant volume form and we will obtain an explicit description of such a volume form. 
\begin{example}\label{Ex:6.7}
    Consider the dynamical system defined by
    \begin{equation}
        \dot x^1 = 1-x^1, \qquad \dot x^2 = x^1-1-x^3, \qquad \dot x^3 = x^3 .
        \label{eq:compart}
    \end{equation}
    Since $\dot x_1 + \dot x_2 + \dot x_3 = 0$, this is a compartmental model and therefore it admits a Hamiltonian description (see \cite{BBG2020physicaD}), with Hamiltonian function given by 
    \begin{equation}
        h = x_1+x_2+x_3.
        \label{eq:hamexapmleunim}
    \end{equation} 
    Now, we will see that this system can be described in terms of the multiplicative unimodular Poisson Hamiltonian system given in Example \ref{ex:onlysemi2}.

    We can parameterise the Lie group $G$ associated to the Lie algebra $\g$ given in Example \ref{ex:onlysemi1} using exponential coordinates of the second kind as
    \begin{equation}
        G = \exp (x^1 X_1) \exp (x^2 X_2) \exp (x^3 X_3) \exp (x^4 X_4). 
    \end{equation}
    In terms of these coordinates, the group multiplication reads
    \begin{equation}
        (x^1,x^2,x^3,x^4) \circ (y^1,y^2,y^3,y^4) = (x^1+y^1,x^2+y^2,x^3+e^{x^4} y^3,x^4+y^4).
    \end{equation}
    From here, the left-invariant vector vector fields can be directly computed
    \begin{equation}
        \lvec X_1 = \frac{\partial}{\partial x^1}, \qquad \lvec X_2 = \frac{\partial}{\partial x^2}, \qquad \lvec X_3 = e^{x^4} \frac{\partial}{\partial x^3}, \qquad \lvec X_4 = \frac{\partial}{\partial x^4}.  
    \end{equation}
    Following a standard integration procedure (see \cite{Vaisman1994poissonbook}), it can be shown that the Poisson--Lie structure which integrates the Lie bialgebra given in Example \ref{ex:onlysemi2} reads
    \begin{equation}
        \{x_1,x_2\} = x_1 - 1, \qquad \{x_1,x_3\} = 0, \qquad \{x_2,x_3\} = x_3, \qquad \{x_4,\cdot\} = 0 .
    \end{equation}
    In Example \ref{ex:onlysemi2} it was proved that $G/H$, where $\h=Lie \, H = \langle X_4 \rangle$, is a Poisson quotient. The Poisson structure may be explicitly described as follows 
    \begin{equation}
        \{x_1,x_2\} = x_1 - 1, \qquad \{x_1,x_3\} = 0, \qquad \{x_2,x_3\} = x_3 .
        \label{eq:PHSexapmleunim}
    \end{equation}   
    We can recover the compartmental model \eqref{eq:compart} by means of this Poisson structure \eqref{eq:PHSexapmleunim} and the Hamiltonian function \eqref{eq:hamexapmleunim}
    \begin{equation}
        \begin{split}
            \dot x^1 &= \{h, x^1 \} = 1-x^1, \\ 
            \dot x^2 &= \{h, x^2 \} = x^1-1-x^3,\\ 
            \dot x^3 &= \{h, x^3 \} = x^3. 
        \end{split} 
    \end{equation}
    Therefore, this dynamical system preserves a semi-invariant volume form. This volume form is given by $q^* \nu = e^{\sigma} \lvec \V$, with $\V = X^1 \wedge X^2 \wedge X^3$ and $\sigma : G \to \mathbb R$ such that $d \sigma = \lvec X^4$. The left-invariant one-forms on $G$ are given by 
    \begin{equation}
        \lvec X^1 = d x^1, \qquad \lvec X^2 = d x^2, \qquad \lvec X^3 = e^{-x^4} d x^3, \qquad \lvec X^4 = d x^4,
    \end{equation}
    and therefore $\sigma = x^4$ and  $q^* \nu = d x^1 \wedge d x^2 \wedge d x^3$. The Hamiltonian vector field for the system \eqref{eq:compart} reads
    \begin{equation}
        X_h^\Pi = (1-x^1) \frac{\partial}{\partial x^1} + (x^1-1-x^3) \frac{\partial}{\partial x^2} + x^3 \frac{\partial}{\partial x^3} ,
    \end{equation}
    and it is clear that $\mathcal L_{X_h^\Pi} (d x^1 \wedge d x^2 \wedge d x^3) = d (i_{X_h^\Pi} (d x^1 \wedge d x^2 \wedge d x^3)) = 0$, so the semi-invariant volume form $d x^1 \wedge d x^2 \wedge d x^3$ is preserved.

\end{example}


\section*{Acknowledgements}

I. Gutierrez-Sagredo has been partially supported by Agencia Estatal de Investigaci\'on (Spain) under grant PID2023-148373NB-I00 funded by MCIN /AEI /10.13039/501100011033/FEDER and by the Q-CAYLE Project funded by the Regional Government of Castilla y Le\'on (Junta de Castilla y Le\'on). D. Iglesias, J.C. Marrero and E. Padr\'on acknowledge financial support from the Spanish Ministry of Science and Innovation under grant PID2022-137909NB-C22. All the authors  has been partially supported by Agencia Estatal de Investigaci\'on (Spain) under grant RED2022-134301-TD.  I. Gutierrez-Sagredo thanks Universidad de La Laguna where part of the work has been done for the hospitality and support.

{\bf Data Availability} The data supporting the conclusions of this paper are included within the article

{\bf Conflicts of interest}  There are no conflict of interest in this article.

\end{document}